\documentclass[12pt]{amsart}

\usepackage{amsmath,amssymb,amsfonts}
\usepackage{mathrsfs}
\usepackage{mathtools}
\usepackage{enumitem}
\usepackage{xcolor}
\usepackage{tikz-cd}
\usepackage{hyperref}
\usepackage[alphabetic,msc-links]{amsrefs}
\hypersetup{
  colorlinks=false,
  pdfpagemode=UseNone,
  pdfborder={0 0 1},
  pdfborderstyle={/S/D/D[3 2]},
  citebordercolor={0 0.55 0},
  linkbordercolor={0 0.55 0},
  urlbordercolor={0 0.55 0},
  pdftitle={Cheeger Constant Rigidity for Cocompact Negatively Curved Manifolds},
  pdfauthor={Kuntao Jin, Xiaodong Wang, and Bo Zhu},
  pdfsubject={Cheeger rigidity under an upper sectional-curvature bound},
  pdfkeywords={Cheeger constant, Busemann function, horospherical suspension, strong stable foliation, rigidity}
}

\usepackage{geometry}
\newtheorem{theorem}{Theorem}[section]

\newtheorem{proposition}[theorem]{Proposition}
\newtheorem{lemma}[theorem]{Lemma}
\newtheorem{corollary}[theorem]{Corollary}

\newtheorem*{acknowledgements}{Acknowledgements}

\theoremstyle{definition}
\newtheorem{definition}[theorem]{Definition}
\newtheorem{example}[theorem]{Example}

\numberwithin{equation}{section}

\DeclareMathOperator{\area}{area}
\DeclareMathOperator{\vol}{vol}
\DeclareMathOperator{\supp}{supp}
\DeclareMathOperator{\diver}{div}

\newcommand{\R}{\mathbb R}

\newcommand{\Wss}{\mathcal W^{ss}}
\newcommand{\Wws}{\mathcal W^{ws}}
\newcommand{\tWss}{\widetilde{\mathcal W}^{ss}}
\newcommand{\tWws}{\widetilde{\mathcal W}^{ws}}
\newcommand{\divss}{\operatorname{div}^{ss}}

\newcommand{\Hh}{\mathbb{H}}
\newcommand{\secg}{\operatorname{sec}}
\newcommand{\hiso}{h_{\mathrm{iso}}}

\begin{document}

\title[Cheeger constant rigidity]{Cheeger Constant Rigidity for Cocompact\\
Negatively Curved Manifolds}
\date{\today}

\author{Kuntao Jin}
\address[Kuntao Jin]{Department of Mathematical Sciences, Tsinghua University}
\email{jkt25@mails.tsinghua.edu.cn}

\author{Xiaodong Wang}
\address[Xiaodong Wang]{Department of Mathematics, Michigan State University,
East Lansing, MI 48824}
\email{xwang@math.msu.edu}

\author{Bo Zhu}
\address[Bo Zhu]{Yau Mathematical Sciences Center,
Tsinghua University}
\email{zhub@tsinghua.edu.cn}
\thanks{Bo Zhu is supported by NSFC~12501066.}

\subjclass[2020]{Primary 53C24, 58J50; Secondary 37D40, 37A25}
\keywords{Cheeger constant, Busemann function, horospherical suspension,
strong stable foliation, rigidity}

\begin{abstract}
Let $(M^m,g)$, $m\geq2$, be a closed connected Riemannian manifold with
$\secg_g\leq-1$, and let $(X,g_X)$ be its universal cover.  Yau proved
that $\hiso(X)\geq m-1$.  We prove that equality is rigid: if
$\hiso(X)=m-1$, then $(X,g_X)$ is isometric to $\Hh^m(-1)$.
\end{abstract}

\maketitle

\setcounter{tocdepth}{2}
\section{Introduction}
\label{sec:introduction}

Let $(M^m,g)$, $m\geq2$, be a closed connected Riemannian manifold with
$\secg_g\leq-1$.  Denote its universal cover and deck group by
\[
  (X,g_X)=(\widetilde M,\widetilde g),
  \qquad
  \Gamma=\pi_1(M).
\]
Let $d_X$ be the Riemannian distance on $X$.  Throughout,
$\Delta=\diver\nabla$; the symbol $\cong$ denotes a Riemannian isometry.
The Cheeger isoperimetric constant is
\[
  \hiso(X)
  :=
  \inf_{\Omega\Subset X}
  \frac{\area(\partial\Omega)}{\vol(\Omega)},
\]
where the infimum is taken over relatively compact domains with smooth
boundary
(see \cite{Cheeger1970}*{definition of the isoperimetric constant}).

For every $\xi\in\partial_\infty X$, the Busemann function $B_\xi$ is
of class $C^2$, and Busemann comparison gives
\[
  |\nabla B_\xi|=1,
  \qquad
  \Delta B_\xi\geq m-1.
\]
If $\Omega\Subset X$ has smooth boundary and outward unit normal $\nu$,
the divergence theorem gives
\[
  (m-1)\vol(\Omega)
  \leq\int_\Omega\Delta B_\xi\,dV
  =\int_{\partial\Omega}
    \langle\nabla B_\xi,\nu\rangle\,dA
  \leq\area(\partial\Omega).
\]
Taking the infimum over $\Omega$ gives Yau's sharp estimate
\begin{equation}\label{eq:yau-lower}
  \hiso(X)\geq m-1
  =\hiso\bigl(\Hh^m(-1)\bigr).
\end{equation}
More generally, Yau proved $\hiso(Y)\geq(m-1)a$ for every complete simply
connected $m$-manifold $(Y,g_Y)$ with $\secg_{g_Y}\leq-a^2<0$; see
\cite{Yau1975Isoperimetric}*{Proposition~3 and Corollary~1}.
Theorem~\ref{thm:main} is the equality case of \eqref{eq:yau-lower}.

\begin{theorem}\label{thm:main}
Let $(M^m,g)$, $m\geq2$, be a closed connected Riemannian manifold with
$\secg_g\leq-1$, and let $(\widetilde M,\widetilde g)$ be its universal
Riemannian cover.  If $\hiso(\widetilde M)=m-1$, then
\[
  (\widetilde M,\widetilde g)\cong\Hh^m(-1).
\]
\end{theorem}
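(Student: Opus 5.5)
The plan is to turn near-equality in Yau's argument into a rigidity statement for Busemann functions, and then use cocompactness to conclude that every horosphere has constant mean curvature $m-1$. That forces constant curvature $-1$.

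\textbf{Step 1: extracting almost-optimal domains.} Assume $\hiso(X)=m-1$ and choose smooth domains $\Omega_k\Subset X$ with $\area(\partial\Omega_k)/\vol(\Omega_k)\to m-1$. Fix $\xi\in\partial_\infty X$. Yau's chain of inequalities gives
\[
  \int_{\Omega_k}\bigl(\Delta B_\xi-(m-1)\bigr)\,dV
  +\int_{\partial\Omega_k}\bigl(1-\langle\nabla B_\xi,\nu\rangle\bigr)\,dA
  =\area(\partial\Omega_k)-(m-1)\vol(\Omega_k)=o(\vol(\Omega_k)).
\]
So the mean-curvature defect $f_\xi:=\Delta B_\xi-(m-1)\geq0$ has average over $\Omega_k$ tending to zero. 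This holds for every $\xi$ simultaneously, since $\Omega_k$ does not depend on $\xi$. By Rauch/Riccati comparison, $f_\xi(x)$ equals the trace of $U-\mathrm{Id}$, where $U$ is the unstable Riccati solution of the geodesic flow at the unit vector $-\nabla B_\xi(x)$. Hence
\[
  F(v):=\operatorname{tr}U(v)-(m-1)\geq0
\]
is a continuous function on the unit tangent bundle $SM$, and it is H\"older along the stable foliation.

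\textbf{Step 2: passing to an invariant measure on $SM$.} Average the normalized volume on $\Omega_k$ against the harmonic or Lebesgue measure on directions $\xi$ (equivalently, against all unit vectors at each point), then project to $SM$ via $\Gamma$. Any weak limit $\mu$ is a probability measure on $SM$ with $\int F\,d\mu=0$. The boundary-flux term controls the failure of invariance, since the boundary is small relative to the volume in the appropriate averaged sense. I expect $\mu$ to be invariant under the geodesic flow, or at least under the horocyclic, strong-stable, directions, which is what the horospherical suspension suggests.

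\textbf{Step 3: the main obstacle.} I need to upgrade from "$F=0$ on $\operatorname{supp}\mu$" to "$F\equiv0$". A F\o lner-type sequence of domains in a negatively curved space is delicate: in fact $\hiso(X)>m-1$ should usually hold. So the content is to show that near-optimal domains force $\mu$ to have full support. My first attempt is to show that $\mu$ is invariant under the strong stable foliation $\Wss$ and under the flow, i.e. $\mu$ is a $\Wss$-invariant (Margulis-type) measure. By unique ergodicity of the horocycle foliation for compact negatively curved manifolds (Bowen--Marcus), $\mu$ is then the Margulis measure, which has full support. The required $\Wss$-invariance should come from writing the flux of $\nabla B_\xi$ through $\partial\Omega_k$ via $\divss$ along leaves. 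The small-defect condition forces the leafwise divergence to be $m-1$ on average, and integrating by parts along horospheres gives the invariance. Once $F\equiv0$ on $SM$, every horosphere has constant mean curvature $m-1$ while $\secg\leq-1$. Then the Riccati equation $U'+U^2+R=0$ with $R\leq-\mathrm{Id}$ and $\operatorname{tr}U\equiv m-1$ gives
\[
  0=\operatorname{tr}U'=-\operatorname{tr}U^2-\operatorname{tr}R\geq-\operatorname{tr}U^2+(m-1).
\]
By Cauchy--Schwarz, $\operatorname{tr}U^2\geq(m-1)$, with equality only if $U=\mathrm{Id}$, so $U=\mathrm{Id}$ and $R=-\mathrm{Id}$. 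Thus $\secg\equiv-1$, and since $X$ is simply connected and complete, $X\cong\Hh^m(-1)$.
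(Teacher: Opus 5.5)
Your outline (Yau defect, a limit measure on the suspension, strong stable invariance, full support, rigidity) is the paper's. However, Step~3, which carries essentially all the content, is not supplied, and the mechanism you sketch for it is not the right one.

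The boundary is not small relative to the volume: $\area(\partial\Omega_k)/\vol(\Omega_k)\to m-1>0$, so no F\o lner-type argument is available. The flux of $\nabla B_\xi$ does not help either. $\nabla B_\xi$ is normal to the leaves, its divergence is the mean curvature $\Delta B_\xi$ rather than a leafwise divergence, and the interior defect only yields $\int F\,d\mu=0$.

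Invariance has to come from the \emph{boundary} defect. Since $1-\langle\nabla B_\xi,\nu\rangle=\tfrac12|\nu-\nabla B_\xi|^2$, one gets $\vol(\Omega_k)^{-1}\int_{\partial\Omega_k}|\nu_k-\nabla B_\xi|^2\,dA\to0$. For a field $Y$ tangent to the horospheres, $\langle Y,\nu_k\rangle=\langle Y,\nu_k-\nabla B_\xi\rangle$, so Cauchy--Schwarz and the bounded ratio $\area/\vol$ kill its normalized flux. A leafwise Gauss--Green formula then gives $\int_Z\divss Y\,d\mu=0$; this needs a tangency-preserving approximation, since $Y$ is only $C^1$ along leaves.

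That identity still has to be turned into information about the support. Disintegrating $\mu$ in foliation boxes, it forces the plaque conditionals to be constant multiples of Riemannian plaque volume. Hence $\supp\mu$ is a closed $\Wss$-saturated set, and Eberlein's theorem that every strong stable leaf is dense gives $\supp\mu=Z$.

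Bowen--Marcus is more than you need. It would also only identify the holonomy-invariant transverse measure; it would not make $\mu$ the Margulis measure, since the leaf conditionals you obtain are Riemannian volume. Flow invariance of $\mu$ is neither proved nor needed. Averaging over $\xi$ is unnecessary: one fixed $\xi_0$ suffices.

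Your final step also fails as written. From $U'+U^2+R=0$ and $\operatorname{tr}U\equiv m-1$ you get $\operatorname{tr}U^2=-\operatorname{tr}R\geq m-1$. Cauchy--Schwarz gives $\operatorname{tr}U^2\geq(\operatorname{tr}U)^2/(m-1)=m-1$. Both inequalities point the same way, so nothing is forced.

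The missing input is the comparison $U\geq\mathrm{Id}$, the same fact that gives $F\geq0$. A symmetric $U-\mathrm{Id}\geq0$ with zero trace vanishes, so $U\equiv\mathrm{Id}$. Then $U'=0$ and $R=-\mathrm{Id}$ along every geodesic, i.e.\ $\secg\equiv-1$.

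With that repair your ending is a valid and shorter alternative to the paper's. The paper uses the Hessian equality for a single $B_\xi$ to write $g_X=dt^2+e^{2t}g_H$, then uses the two-sided curvature bound to force $g_H$ flat. Your version instead uses the fact that equality holds for every $\xi$, which full support provides.
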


Since equality holds on $\Hh^m(-1)$, Theorem~\ref{thm:main} and
\eqref{eq:yau-lower} give the equivalent formulation
\[
  \hiso(X)=m-1
  \quad\Longleftrightarrow\quad
  (X,g_X)\cong\Hh^m(-1).
\]

\subsection*{Relations among \texorpdfstring{$h_{\mathrm{iso}}$,
$\lambda_1$, and $h_{\mathrm{vol}}$}{the Cheeger constant, bottom spectrum,
and volume entropy}}

Set
\[
  \lambda_1(X):=\inf_{0\neq u\in C_c^\infty(X)}
  \frac{\int_X|\nabla u|^2\,dV}{\int_Xu^2\,dV},
  \qquad
  h_{\mathrm{vol}}(X):=\lim_{R\to\infty}\frac{1}{R}
  \log\vol B_X(o,R).
\]
The second limit exists and is independent of $o$ because $X$ is the universal
cover of a closed manifold.  Cheeger's inequality and Brooks's volume-growth
estimate give the following comparison
(see \cite{Cheeger1970}, \cite{Brooks1981}, and
\cite{JinZhu2026McKean}*{Introduction}):
\begin{equation}\label{eq:intro-cheeger-spectrum-entropy}
  h_{\mathrm{iso}}(X)^2
  \leq4\lambda_1(X)
  \leq h_{\mathrm{vol}}(X)^2.
\end{equation}

Since $h_{\mathrm{iso}}(X)\geq m-1$ by Yau's estimate,
\eqref{eq:intro-cheeger-spectrum-entropy} gives
\[
  h_{\mathrm{vol}}(X)=m-1
  \quad\Longrightarrow\quad
  \lambda_1(X)=\frac{(m-1)^2}{4}
  \quad\Longrightarrow\quad
  h_{\mathrm{iso}}(X)=m-1.
\]
Xiaodong Wang observed that the Ledrappier--Wang integral formula, together
with Gang Liu's equality argument, also proves rigidity under the curvature
assumption of this paper when $h_{\mathrm{vol}}(X)=m-1$
(see \cite{LedrappierWang2010}*{Theorem~1 and the proof of Theorem~2} and
\cite{Liu2011}*{proof of Theorem~1}).  Kuntao Jin and Bo Zhu proved rigidity
from the middle equality
(see \cite{JinZhu2026McKean}*{Theorem~1.1}).  This Cheeger rigidity theorem
is stronger: its hypothesis
$h_{\mathrm{iso}}(X)=m-1$ is the weakest of the three equality assumptions.
The comparison alone gives neither equality in $\lambda_1(X)$ nor equality in
$h_{\mathrm{vol}}(X)$ from $h_{\mathrm{iso}}(X)=m-1$.

\subsection*{Proof strategy}

Section~\ref{sec:busemann-yau-defects} proves Busemann comparison and the
Yau defect identity
\begin{align*}
  \area(\partial\Omega)-(m-1)\vol(\Omega)
  ={}&\int_{\partial\Omega}
  \bigl(1-\langle\nabla B_\xi,\nu\rangle\bigr)\,dA\\
  &+\int_\Omega\bigl(\Delta B_\xi-(m-1)\bigr)\,dV.
\end{align*}
For an isoperimetric minimizing sequence, both nonnegative terms vanish
after division by $\vol(\Omega_j)$.

Section~\ref{sec:suspension-localization} pushes the normalized volume
measures to the compact suspension
$Z=(X\times\partial_\infty X)/\Gamma$.  A subsequence converges weakly to
a probability measure $\mu$, and the interior term gives
\[
  \supp\mu\subseteq
  \{[x,\xi]\in Z:\Delta B_\xi(x)=m-1\}.
\]
Section~\ref{sec:strong-stable-divergence} identifies horospheres with
strong stable leaves.  The boundary term, the leafwise Gauss--Green formula,
and the tangency-preserving approximation from
Appendix~\ref{sec:horosphere-tangent-approximation} give
\[
  \int_Z\divss Y\,d\mu=0
\]
for every continuous strong stable vector field $Y$ with continuous
leafwise covariant derivative.

Section~\ref{sec:disintegration-full-support} proves that the conditional
measures of $\mu$ are multiples of Riemannian plaque volume.  Thus
$\supp\mu$ is saturated by strong stable leaves.  Eberlein's minimality
theorem implies that every such leaf is dense
(see \cite{Eberlein1973}*{Theorem~6.1}).  Hence $\supp\mu=Z$, and
\[
  \Delta B_\xi(x)=m-1
  \qquad\text{for every }(x,\xi)\in X\times\partial_\infty X.
\]

Section~\ref{sec:proof-main-theorem} uses equality in Busemann comparison to
obtain
\[
  \nabla^2B_\xi=g_X-dB_\xi\otimes dB_\xi,
  \qquad
  g_X=dt^2+e^{2t}g_H.
\]
The curvature bounds force $g_H$ to be flat, so this warped product is
$\Hh^m(-1)$.  Section~\ref{sec:p-comparison} proves the lower bound for
the variational bottom of the $p$-Laplacian and characterizes equality.

\section{Busemann comparison and the Yau defect}
\label{sec:busemann-yau-defects}

Only the curvature bounds are used in this section; no cocompact group
action is needed.  Let
$(X^m,g_X)$, $m\geq2$, be a complete simply connected Riemannian
manifold, and assume that, for some $A\geq1$,
\begin{equation}\label{eq:pinched-curvature}
	-A^2\leq\secg_{g_X}\leq-1.
\end{equation}
Thus $X$ is a pinched negatively curved Hadamard manifold.  The standard
visual-boundary and Busemann facts are taken from
\cite{EO73}*{Sections~1--3} and \cite{HIH77}*{Sections~2--3}.

\subsection{Visual boundary and Busemann comparison}

A \emph{geodesic ray} is a unit-speed geodesic
\(c\colon[0,\infty)\to X\); thus
\[
d_X\bigl(c(s),c(t)\bigr)=|s-t|
\quad (s,t\geq0).
\]
Two geodesic rays \(c_1\) and \(c_2\) are called \emph{asymptotic}, written
\(c_1\sim c_2\), if
\[
\sup_{t\geq0}d_X\bigl(c_1(t),c_2(t)\bigr)<\infty.
\]
For unit-speed rays, this is equivalent to requiring their images to have
finite Hausdorff distance.  The triangle inequality shows that \(\sim\) is
an equivalence relation.  The \emph{visual boundary} of \(X\) is
\[
\partial_\infty X
:=
\{\text{geodesic rays in \(X\)}\}/\mathord{\sim}.
\]
The class \([c]\), also denoted by \(c(+\infty)\), is the endpoint of \(c\)
at infinity.

Fix a base point \(o\in X\) and set
\[
S_oX:=\{v\in T_oX:|v|_{g_X}=1\},
\quad
c_v(t):=\exp_o(tv).
\]
Every \(\xi\in\partial_\infty X\) has a unique representative
\(c_{o,\xi}\) issuing from \(o\).  Existence follows by taking a locally
uniform limit of the minimizing segments from \(o\) to points tending to
\(\xi\).  For uniqueness, if \(\gamma_1,\gamma_2\) are asymptotic rays
issuing from \(o\), then
\[
f(t):=d_X\bigl(\gamma_1(t),\gamma_2(t)\bigr)
\]
is bounded and convex with \(f(0)=0\).  Thus
\(f(s)\leq (s/t)f(t)\) for \(0<s<t\); letting \(t\to\infty\) gives
\(\gamma_1=\gamma_2\).

The endpoint map
\[
\Phi_o\colon S_oX\longrightarrow\partial_\infty X,
\quad
\Phi_o(v):=[c_v],
\]
is therefore a bijection.  The \emph{cone topology} on \(\partial_\infty X\)
is the topology transported from \(S_oX\) by \(\Phi_o\).  Thus
\(\xi_j\to\xi\) precisely when
\[
\dot c_{o,\xi_j}(0)\longrightarrow\dot c_{o,\xi}(0)
\quad\text{in }S_oX.
\]
Equivalently, \(c_{o,\xi_j}\to c_{o,\xi}\) uniformly on every bounded time
interval.

For another base point \(o'\in X\), define the reanchoring map by
\[
R_{o,o'}
:=
\Phi_{o'}^{-1}\circ\Phi_o
\colon S_oX\longrightarrow S_{o'}X.
\]
The same compactness and convexity argument shows that \(R_{o,o'}\) and
\(R_{o',o}\) are continuous.  Hence \(R_{o,o'}\) is a homeomorphism, so the
cone topology is independent of the base point.

An isometry \(F\in\operatorname{Isom}(X)\) acts on the visual
boundary by
\[
F_\infty([c]):=[F\circ c].
\]
This is well defined because \(F\) preserves distances.  In the endpoint
coordinates it has the form
\[
F_\infty
=
\Phi_{F(o)}
\circ\left.dF_o\right|_{S_oX}
\circ\Phi_o^{-1},
\]
so \(F_\infty\) is a homeomorphism of \(\partial_\infty X\).

For \(\xi\in\partial_\infty X\), define the Busemann function normalized at
\(o\) by
\begin{equation}\label{eq:busemann-definition}
	B_{\xi,o}(x)
	:=\lim_{t\to\infty}
	\bigl(d_X(x,c_{o,\xi}(t))-t\bigr).
\end{equation}
For existence of the limit, set
\[
h_t(x):=d_X(x,c_{o,\xi}(t))-t.
\]
The triangle inequality shows that \(t\mapsto h_t(x)\) is nonincreasing and
that
\[
-d_X(x,o)\leq h_t(x)\leq d_X(x,o).
\]
Hence the limit in \eqref{eq:busemann-definition} exists, is finite, and
satisfies \(B_{\xi,o}(o)=0\).  Since the functions \(h_t\) are uniformly
\(1\)-Lipschitz, they converge uniformly on compact subsets, and
\(B_{\xi,o}\) is \(1\)-Lipschitz.

Changing the base point changes only the additive normalization: for
\(o'\in X\),
\[
B_{\xi,o'}(x)
=
B_{\xi,o}(x)-B_{\xi,o}(o').
\]
Define the Busemann cocycle by
\[
\beta_\xi(x,y)
:=
B_{\xi,o}(x)-B_{\xi,o}(y);
\]
by the base-point change identity, this definition is independent of \(o\).
It satisfies
\[
\beta_\xi(x,z)
=
\beta_\xi(x,y)+\beta_\xi(y,z),
\quad
|\beta_\xi(x,y)|\leq d_X(x,y).
\]
For \(F\in\operatorname{Isom}(X)\),
\[
B_{F_\infty(\xi),F(o)}(F(x))=B_{\xi,o}(x),
\quad
\beta_{F_\infty(\xi)}(F(x),F(y))=\beta_\xi(x,y).
\]
Since the gradient, Hessian, and Laplacian are unchanged by adding a
constant, the notation \(B_\xi\) will be used whenever only derivatives
are involved.

\begin{proposition}
	\label{prop:busemann-comparison}
	Let \((X^m,g_X)\), \(m\geq2\), be a complete simply connected
	Riemannian manifold, and assume that
	\begin{equation*}
	-A^2\leq\secg_{g_X}\leq-1
	\end{equation*}
	for some \(A\geq1\).
	Then
	\begin{enumerate}[label=\textup{(\roman*)}]
		\item For every \(\xi\in\partial_\infty X\), the Busemann function
		\(B_\xi\) is of class \(C^2\).  If \(v_{x,\xi}\in S_xX\) is the initial
		velocity of the unique ray from \(x\) to \(\xi\), then
		\begin{equation}\label{eq:unit-busemann-gradient}
			\nabla B_\xi(x)=-v_{x,\xi},
			\quad
			|\nabla B_\xi(x)|=1.
		\end{equation}
		
		\item For every \(\xi\in\partial_\infty X\),
		\begin{equation}\label{eq:busemann-hessian-comparison}
			g_X-dB_\xi\otimes dB_\xi
			\leq\nabla^2B_\xi
			\leq A(g_X-dB_\xi\otimes dB_\xi),
		\end{equation}
		and hence
		\begin{equation}\label{eq:busemann-laplacian-comparison}
			m-1\leq\Delta B_\xi\leq A(m-1).
		\end{equation}
		
		\item Let \(\operatorname{pr}_X\colon
		X\times\partial_\infty X\to X\) be the projection.  Then
		\[
		(x,\xi)\longmapsto\nabla B_\xi(x),
		\quad
		(x,\xi)\longmapsto\nabla^2B_\xi(x)
		\]
		define continuous sections of
		\(\operatorname{pr}_X^*TX\) and
		\(\operatorname{pr}_X^*\operatorname{Sym}^2(T^*X)\), respectively, and
		\[
		(x,\xi)\longmapsto\Delta B_\xi(x)
		\]
		is a continuous function on \(X\times\partial_\infty X\).
	\end{enumerate}
\end{proposition}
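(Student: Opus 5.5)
I will prove the proposition by approximating $B_\xi$ by the distance functions $h_t(x)=d_X(x,c_{o,\xi}(t))-t$ and transferring Hessian comparison for distance functions to the limit. Fix $\xi$ and $o$, and write $p_t=c_{o,\xi}(t)$. On a Hadamard manifold $r_t:=d_X(\cdot,p_t)$ is smooth on $X\setminus\{p_t\}$, with $|\nabla r_t|=1$. The Rauch/Riccati comparison under $-A^2\leq\secg\leq-1$ gives
\[
\coth(r_t)\,(g_X-dr_t\otimes dr_t)\leq\nabla^2 r_t\leq A\coth(Ar_t)\,(g_X-dr_t\otimes dr_t).
\]
To see this, note that along a geodesic from $p_t$ the shape operator $U$ of distance spheres satisfies the Riccati equation $U'+U^2+R=0$. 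Scalar comparison with the model solutions $\coth s$ and $A\coth(As)$, both with the same $1/s$ singularity at $s=0$, gives the two-sided bound. On a fixed compact set $K$ we have $r_t\to\infty$ uniformly, so both coefficients tend to $1$ and $A$ uniformly.

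The first step is $C^1$ convergence. For $x\in K$, $-\nabla r_t(x)$ is the initial velocity of the geodesic segment from $x$ to $p_t$. By the uniqueness argument above, these segments converge to the ray $c_{x,\xi}$ from $x$ to $\xi$: any limit of them is a ray from $x$ at bounded distance from $c_{o,\xi}$, hence asymptotic to it. This gives pointwise convergence $\nabla h_t(x)\to -v_{x,\xi}$.

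The second step, which I expect to be the main obstacle, is to upgrade this to locally uniform $C^1$ and then $C^2$ convergence; the Hessian bounds above only control $\nabla^2 h_t$ in $L^\infty$. I would try first to show that the vector fields $\nabla h_t$ are uniformly Lipschitz on $K$, which follows from the upper Hessian bound. Arzel\`a--Ascoli together with the pointwise limit then gives uniform convergence, so $B_\xi\in C^1$ with $\nabla B_\xi=-v_{x,\xi}$ and $|\nabla B_\xi|=1$. For $C^2$, I would represent $\nabla^2 h_t(x)$ as $U_t(0)$, where $U_t$ is the Riccati solution along the geodesic from $x$ toward $p_t$, started at $+\infty$ at distance $r_t$. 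The standard construction of the stable Riccati solution (Eschenburg, Heintze--Im Hof) shows that $U_t(0)$ converges to the stable solution $U_\xi(0)$ along $c_{x,\xi}$, uniformly for $x\in K$. The key point is that the difference of two Riccati solutions $U_1,U_2$ satisfies a linear equation $D'=-(U_1D+DU_2)$ with $U_i\geq\coth$-type lower bounds, so differences decay like $e^{-2s}$. This exponential contraction, which uses $\secg\leq-1$, gives uniform Cauchy estimates. Combined with the continuous dependence of the ray $c_{x,\xi}$ on $x$ from the first step, this shows that $\nabla^2 h_t\to U_\xi$ locally uniformly, that $U_\xi$ is continuous in $x$, and that $B_\xi\in C^2$ with $\nabla^2 B_\xi=U_\xi$.

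Passing to the limit $t\to\infty$ in the Hessian comparison gives (ii), and tracing (ii) gives \eqref{eq:busemann-laplacian-comparison}, since $\operatorname{tr}(g_X-dB_\xi\otimes dB_\xi)=m-1$. For (iii), I will run the same estimates jointly in $(x,\xi)$. The cone-topology description shows that $v_{x,\xi}$ depends continuously on $(x,\xi)$: a limit of rays $c_{x_j,\xi_j}$ is a ray from $x$ asymptotic to $c_{o,\xi}$, using the reanchoring continuity. The stable Riccati solution depends continuously on its initial geodesic, because of the uniform exponential contraction together with continuous dependence of finite-time Riccati solutions on the data. Hence $\nabla B_\xi(x)$, $\nabla^2B_\xi(x)$, and $\Delta B_\xi(x)$ are continuous on $X\times\partial_\infty X$.
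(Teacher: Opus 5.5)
Your proposal is correct. It follows the paper's overall scheme: approximate by distance functions, pass the two-sided Hessian comparison to the limit, and get joint continuity as a uniform limit of continuous finite-distance Hessians. The key estimate is different, though.

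\textbf{The paper's route.} It takes the $C^2$ convergence in (i) from Heintze--Im Hof. For (iii) it re-anchors the approximating point on the ray itself: $H^T_{x,\xi}$ is the Hessian at $x$ of $d_X(\cdot,c_{x,\xi}(T))$. So $H^T_{x,\xi}$ and $\nabla^2B_\xi(x)$ are computed along the \emph{same} geodesic. A Jacobi-field argument (Euclidean Rauch comparison for $J^T_w-J^s_w$, plus convexity of $|J^s_w|$) then gives $\|H^T_{x,\xi}-\nabla^2B_\xi(x)\|\leq 1/T$, using only $\secg\leq0$.

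\textbf{Your route.} You keep $o$ fixed, so the segment from $x$ to $p_t$ and the ray $c_{x,\xi}$ are different geodesics. You bridge them in two steps:
\begin{enumerate}
\item The Riccati contraction $|D(\sigma)|\leq e^{-2(\sigma-\sigma_0)}|D(\sigma_0)|$, which needs $U_i\geq I$ and hence $\secg\leq-1$, shows that the stable solution $v\mapsto U^{\mathrm{stable}}(v)$ is a uniform limit of continuous finite-time solutions on $SX$.
\item Uniform continuity on compact subsets of $SX$, together with uniform convergence on $K$ of the initial velocity of the segment from $x$ to $p_t$ to $v_{x,\xi}$, finishes the argument.
\end{enumerate}

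\textbf{Comparison.} Your route gives a self-contained proof of (i). One lemma, continuity of the stable Riccati solution on $SX$, drives both (i) and (iii), and you get an exponential rather than a $1/T$ rate. The paper's route is shorter and its uniform estimate needs only nonpositive curvature, but it cites the $C^2$ regularity, which contains exactly this cross-geodesic comparison.

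\textbf{When you write it out.}
\begin{enumerate}
\item Prove continuity of $v\mapsto U^{\mathrm{stable}}(v)$ before the $C^2$ step. You currently state it only in the paragraph on (iii).
\item Start each contraction at distance at least $1$ from $p_t$. There $I\leq U_t\leq A\coth(A)\,I$ and $I\leq U^{\mathrm{stable}}\leq A\,I$, which gives a bounded initial difference; at $p_t$ itself the difference is infinite.
\end{enumerate}
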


\begin{proof}
	Fix \(\xi\in\partial_\infty X\) and \(x\in X\), and write
	\(c=c_{x,\xi}\).  For \(R>0\), set
	\[
	b_R(y):=d_X(y,c(R))-R.
	\]
	On every compact subset of \(X\), the functions \(b_R\) are smooth for
	all sufficiently large \(R\) and converge to the Busemann function
	normalized by \(B_\xi(x)=0\).
	
	Stable-Jacobi-field convergence shows that the radial fields
	\(-\nabla b_R\) converge locally uniformly to
	\(v_{\,\cdot,\xi}\), and their covariant derivatives converge locally
	uniformly to the derivatives determined by the stable Jacobi fields along
	the rays to \(\xi\)
	(see \cite{HIH77}*{Lemma~2.2 and Proposition~3.1}).
	Equivalently, \(b_R\to B_\xi\) in \(C^2\) on compact subsets.  Hence
	\[
	B_\xi\in C^2(X),
	\quad
	\nabla B_\xi(x)=-v_{x,\xi},
	\quad
	|\nabla B_\xi|=1,
	\]
	which proves \textup{(i)}.  The convergence is uniform on compact
	families.
	
	For \textup{(ii)}, decompose
	\[
	w=a\,v_{x,\xi}+w^\perp,
	\quad
	w^\perp\perp v_{x,\xi}.
	\]
	Hessian comparison for the distance from \(c(R)\) gives
	\[
	\coth R\,|w^\perp|^2
	\leq \nabla^2b_R(x)(w,w)
	\leq A\coth(AR)\,|w^\perp|^2.
	\]
	Passing to the \(C^2\)-limit and using
	\(\nabla B_\xi=-v_{x,\xi}\) yields
	\[
	|w|^2-\langle w,\nabla B_\xi\rangle^2
	\leq\nabla^2B_\xi(w,w)
	\leq
	A\bigl(|w|^2-\langle w,\nabla B_\xi\rangle^2\bigr).
	\]
	This is \eqref{eq:busemann-hessian-comparison}.  Taking the trace in an
	orthonormal basis whose first vector is \(v_{x,\xi}\) gives the two
	Laplacian bounds in \eqref{eq:busemann-laplacian-comparison}.
	
	For \textup{(iii)}, the map
	\[
	(x,\xi)\longmapsto v_{x,\xi}
	\]
	is continuous by the cone topology, so the gradient formula gives the
	joint continuity of \(\nabla B_\xi(x)\).
	
	For the Hessian, fix \(T>0\) and define
	\[
	H^T_{x,\xi}
	:=
	\left.
	\nabla_y^2\bigl(d_X(y,c_{x,\xi}(T))-T\bigr)
	\right|_{y=x}.
	\]
	Because a Hadamard manifold has no cut locus and
	\((x,\xi)\mapsto c_{x,\xi}(T)\) is continuous, \(H^T\) is a continuous
	section of
	\(\operatorname{pr}_X^*\operatorname{Sym}^2(T^*X)\).
	
	Let \(J_w^T\) be the normal Jacobi field along
	\(c_{x,\xi}|_{[0,T]}\) with \(J_w^T(0)=w\) and \(J_w^T(T)=0\), and let
	\(J_w^s\) be the bounded normal Jacobi field with \(J_w^s(0)=w\).
	Set \(K=J_w^T-J_w^s\).  Then \(K\) is a normal Jacobi field satisfying
	\(K(0)=0\) and \(K(T)=-J_w^s(T)\).  Rauch comparison with the Euclidean
	Jacobi field having the same initial derivative gives
	\[
	  |K'(0)|\leq\frac{|K(T)|}{T}
	  =\frac{|J_w^s(T)|}{T}
	\qquad
	\text{(see \cite{HIH77}*{Lemma~2.2}).}
	\]
	Because \(\secg_{g_X}\leq0\), the function \(t\mapsto|J_w^s(t)|\) is
	convex.  It is bounded on \([0,\infty)\), so it is nonincreasing and
	\(|J_w^s(T)|\leq|w|\).  Consequently,
	\[
	  \bigl|(J_w^T)'(0)-(J_w^s)'(0)\bigr|
	  \leq\frac{|w|}{T}.
	\]
	For \(w,u\perp v_{x,\xi}\), the Jacobi-field formulas are
	\[
	  H^T_{x,\xi}(w,u)
	  =-\langle(J_w^T)'(0),u\rangle,
	  \qquad
	  \nabla^2B_\xi(x)(w,u)
	  =-\langle(J_w^s)'(0),u\rangle.
	\]
	Both Hessians vanish whenever one argument is \(v_{x,\xi}\).  Hence the
	preceding vector estimate gives the uniform operator-norm bound
	\[
	\bigl\|H^T_{x,\xi}-\nabla^2B_\xi(x)\bigr\|
	\leq\frac1T.
	\]
	The estimate is independent of \((x,\xi)\), so
	\(\nabla^2B_\xi(x)\) is the uniform limit of the continuous sections
	\(H^T\) and is jointly continuous.  Taking the trace proves the joint
	continuity of \(\Delta B_\xi(x)\).
\end{proof}

Part~\textup{(iii)} follows from the uniform finite-sphere approximation
of the Hessian.  The two-sided curvature bound is
essential: Busemann functions on a general Hadamard manifold need not be
$C^2$.  In constant sectional curvature $-1$, equality holds in the
comparison estimate:
\[
\nabla^2B_\xi=g_X-dB_\xi\otimes dB_\xi,
\quad
\Delta B_\xi=m-1.
\]
In particular,
\begin{equation}\label{eq:hessian-comparison}
  |\nabla B_\xi|=1,
  \qquad
  \nabla^2B_\xi\geq g_X-dB_\xi\otimes dB_\xi.
\end{equation}
Taking the trace gives
\[
  \Delta B_\xi-(m-1)\geq 0.
\]

\subsection{The Yau defect identity and isoperimetric minimizing sequences}
Yau's lower bound follows from a flux estimate.  Let $B\in C^2(X)$ satisfy
$|\nabla B|\leq1$ and
$\Delta B\geq c$.  For every relatively compact domain
$\Omega\Subset X$ with smooth boundary and outward unit normal $\nu$,
the Laplacian bound, the divergence theorem, and the gradient bound give
\[
  c\,\vol(\Omega)
  \leq \int_\Omega\Delta B\,dV
  =\int_{\partial\Omega}\langle\nabla B,\nu\rangle\,dA
  \leq \area(\partial\Omega).
\]
Dividing by $\vol(\Omega)$ and taking the infimum over $\Omega$ yields
$\hiso(X)\geq c$.  In particular, applying the estimate to a Busemann
function $B_\xi$, for which $|\nabla B_\xi|=1$ and
$\Delta B_\xi\geq m-1$, gives $\hiso(X)\geq m-1$.

For a domain $\Omega$, define its \emph{Yau excess} to be
\[
  \area(\partial\Omega)-(m-1)\vol(\Omega).
\]

\begin{lemma}
\label{lem:yau-defect}
Let $(X^m,g_X)$, $m\geq2$, be a Riemannian manifold without boundary,
and let $B\in C^2(X)$.  If $\Omega\Subset X$ is a domain with
smooth boundary and outward unit normal $\nu$, then
\begin{equation}\label{eq:yau-defect}
  \area(\partial\Omega)-(m-1)\vol(\Omega)
  =\int_{\partial\Omega}
  \bigl(1-\langle\nabla B,\nu\rangle\bigr)\,dA
  +\int_\Omega\bigl(\Delta B-(m-1)\bigr)\,dV.
\end{equation}
\end{lemma}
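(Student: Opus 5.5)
This identity is pure bookkeeping: apply the divergence theorem to $\nabla B$ on $\Omega$, then add and subtract. No curvature hypothesis and no sign condition on $B$ is needed; only $B\in C^2(X)$ and the compactness of $\overline\Omega$ with smooth boundary enter.

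\emph{Step 1.} Since $\overline\Omega$ is compact with smooth boundary and $\nabla B$ is a $C^1$ vector field on a neighborhood of $\overline\Omega$, the divergence theorem applies:
\[
  \int_\Omega\Delta B\,dV=\int_{\partial\Omega}\langle\nabla B,\nu\rangle\,dA .
\]
Both integrals are finite because $\Delta B$ and $\nabla B$ are continuous on the compact sets $\overline\Omega$ and $\partial\Omega$.

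\emph{Step 2.} Write
\[
  \area(\partial\Omega)=\int_{\partial\Omega}\bigl(1-\langle\nabla B,\nu\rangle\bigr)\,dA+\int_{\partial\Omega}\langle\nabla B,\nu\rangle\,dA
\]
and
\[
  -(m-1)\vol(\Omega)=\int_\Omega\bigl(\Delta B-(m-1)\bigr)\,dV-\int_\Omega\Delta B\,dV .
\]
Add these two expressions. By Step 1, the flux term $\int_{\partial\Omega}\langle\nabla B,\nu\rangle\,dA$ cancels against $\int_\Omega\Delta B\,dV$, and what remains is exactly \eqref{eq:yau-defect}.

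The only point requiring any care is the regularity needed for the divergence theorem. $C^1$ regularity of the field $\nabla B$ on a neighborhood of the compact closure suffices, and this follows from $B\in C^2(X)$. There is no genuine obstacle.

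For later use: when $B=B_\xi$ and Proposition~\ref{prop:busemann-comparison} applies, $|\nabla B_\xi|=1$ makes the boundary integrand nonnegative, and \eqref{eq:busemann-laplacian-comparison} makes the interior integrand nonnegative. These signs are not needed for the identity itself.
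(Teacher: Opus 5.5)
Your proof is correct and matches the paper's argument: apply the divergence theorem to the $C^1$ field $\nabla B$ on $\overline\Omega$, then add and subtract the flux $\int_{\partial\Omega}\langle\nabla B,\nu\rangle\,dA$. You are also right that no curvature or sign hypothesis enters the identity; the paper uses the signs only afterwards, in Proposition~\ref{prop:yau-defects}.
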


\begin{proof}
The divergence theorem gives
\[
  \int_\Omega\Delta B\,dV
  =\int_{\partial\Omega}\langle\nabla B,\nu\rangle\,dA.
\]
Adding and subtracting the flux gives
\[
\begin{array}{cl}
  & \displaystyle \area(\partial\Omega)-(m-1)\vol(\Omega) \\[4pt]
  ={} & \displaystyle \int_{\partial\Omega}1\,dA
      -(m-1)\int_\Omega1\,dV \\[4pt]
  ={} & \displaystyle \int_{\partial\Omega}
      \bigl(1-\langle\nabla B,\nu\rangle\bigr)\,dA
      +\int_\Omega\bigl(\Delta B-(m-1)\bigr)\,dV.
\end{array}
\]
\end{proof}

For $B=B_\xi$, the Yau defect identity~\eqref{eq:yau-defect} separates the
two errors in the flux estimate.  The first is the \emph{boundary defect}
and the second the \emph{interior defect}.
By \eqref{eq:hessian-comparison}, $|\nabla B_\xi|=1$ and
$\Delta B_\xi\geq m-1$.  Together with $|\nu|=1$, these facts imply
\[
  1-\langle\nabla B_\xi,\nu\rangle\geq0
  \quad\text{on }\partial\Omega,
  \qquad
  \Delta B_\xi-(m-1)\geq0
  \quad\text{on }\Omega.
\]
The first inequality is Cauchy--Schwarz, so both defects are
nonnegative.  If
$\hiso(X)=m-1$, dividing \eqref{eq:yau-defect} by
$\vol(\Omega_j)$ along an isoperimetric minimizing sequence shows that the
sum of the two normalized defects tends to zero.  Each normalized defect
must therefore tend to zero separately.  The boundary defect also controls
the misalignment of the boundary normal with $\nabla B_\xi$.

\begin{proposition}
\label{prop:yau-defects}
Let $(X^m,g_X)$, $m\geq2$, be a complete simply connected Riemannian
manifold satisfying $-A^2\leq\secg_{g_X}\leq-1$ for some $A\geq1$.
If $\hiso(X)=m-1$, then there exists a sequence of relatively compact domains
$\Omega_j\Subset X$ with smooth boundary such that
\begin{equation}\label{eq:isoperimetric-sequence}
  \frac{\area(\partial\Omega_j)}{\vol(\Omega_j)}
  \longrightarrow m-1.
\end{equation}
Let $\{\Omega_j\}$ be any sequence satisfying
\eqref{eq:isoperimetric-sequence}.  Fix $\xi_0\in\partial_\infty X$, set
\[
  B=B_{\xi_0},
  \qquad
  N=\nabla B.
\]
If $\nu_j$ denotes the outward unit normal along $\partial\Omega_j$,
then both limits below hold:
\begin{enumerate}[label=\textup{(\roman*)}]
\item The interior defect vanishes:
\begin{equation}\label{eq:interior-defect}
  \frac{1}{\vol(\Omega_j)}\int_{\Omega_j}
  \bigl(\Delta B-(m-1)\bigr)\,dV
  \longrightarrow0.
\end{equation}

\item The boundary defect vanishes:
\begin{equation}\label{eq:normal-alignment}
  \frac{1}{\vol(\Omega_j)}
  \int_{\partial\Omega_j}|\nu_j-N|^2\,dA\longrightarrow 0.
\end{equation}
\end{enumerate}
\end{proposition}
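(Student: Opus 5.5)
The existence of a sequence satisfying \eqref{eq:isoperimetric-sequence} comes straight from the definition of $\hiso(X)$ as an infimum. If $\hiso(X)=m-1$, then for each $j$ there is a relatively compact smooth domain $\Omega_j$ with
\[
m-1\leq\frac{\area(\partial\Omega_j)}{\vol(\Omega_j)}<m-1+\frac1j .
\]
The lower bound is Yau's estimate \eqref{eq:yau-lower}, which holds here by the flux argument applied to $B_{\xi_0}$. The only inputs are $|\nabla B_{\xi_0}|=1$ and $\Delta B_{\xi_0}\geq m-1$, both supplied by Proposition~\ref{prop:busemann-comparison}. No cocompactness is needed.

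For (i) and (ii), fix any sequence satisfying \eqref{eq:isoperimetric-sequence} and apply Lemma~\ref{lem:yau-defect} with $B=B_{\xi_0}$, which is $C^2$ by Proposition~\ref{prop:busemann-comparison}(i). Dividing \eqref{eq:yau-defect} by $\vol(\Omega_j)$ gives
\[
\frac{\area(\partial\Omega_j)}{\vol(\Omega_j)}-(m-1)
=\frac{1}{\vol(\Omega_j)}\int_{\partial\Omega_j}\bigl(1-\langle N,\nu_j\rangle\bigr)\,dA
+\frac{1}{\vol(\Omega_j)}\int_{\Omega_j}\bigl(\Delta B-(m-1)\bigr)\,dV.
\]
The left side tends to $0$ by hypothesis. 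Both integrands on the right are pointwise nonnegative: the first by Cauchy--Schwarz, since $|N|=|\nu_j|=1$, and the second by \eqref{eq:busemann-laplacian-comparison}. A sum of two nonnegative sequences tending to zero forces each to tend to zero. The interior term is exactly \eqref{eq:interior-defect}.

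For the boundary term, I will use the elementary identity for unit vectors
\[
|\nu_j-N|^2=|\nu_j|^2+|N|^2-2\langle\nu_j,N\rangle=2\bigl(1-\langle N,\nu_j\rangle\bigr),
\]
which again relies on $|N|=1$ from \eqref{eq:unit-busemann-gradient}. Integrating over $\partial\Omega_j$ and dividing by $\vol(\Omega_j)$ shows that the normalized quantity in \eqref{eq:normal-alignment} is twice the normalized boundary defect, so it also tends to $0$.

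There is no real obstacle here; the argument is bookkeeping. The one point to check is that the nonnegativity of both defects holds pointwise, not merely after integration. This is what allows the vanishing of their sum to split into the vanishing of each term. It depends on the exact unit length of $\nabla B_{\xi_0}$ and on the Laplacian comparison, both of which are provided by Proposition~\ref{prop:busemann-comparison}.
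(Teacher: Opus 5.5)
Your proof is correct and follows essentially the same route as the paper. You obtain existence from the definition of $\hiso$ as an infimum, divide the Yau defect identity by $\vol(\Omega_j)$, split the vanishing sum into two nonnegative normalized defects, and use $|\nu_j-N|^2=2(1-\langle N,\nu_j\rangle)$. One harmless difference: you cite Yau's estimate for the lower bound $m-1\leq\area(\partial\Omega_j)/\vol(\Omega_j)$, but this already follows from the hypothesis $\hiso(X)=m-1$ and the definition of the infimum.
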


\begin{proof}
By the definition of the Cheeger constant, for each $j\geq1$ there is a
relatively compact domain $\Omega_j\Subset X$ with smooth boundary such
that
\[
  m-1=\hiso(X)
  \leq
  \frac{\area(\partial\Omega_j)}{\vol(\Omega_j)}
  <m-1+\frac1j.
\]
Thus \eqref{eq:isoperimetric-sequence} holds.

Now fix any sequence satisfying \eqref{eq:isoperimetric-sequence} and any
$\xi_0\in\partial_\infty X$.  With $B$, $N$, and $\nu_j$ as in the
statement, define
\begin{align*}
  E_j
  &:=\frac{\area(\partial\Omega_j)}{\vol(\Omega_j)}-(m-1),\\
  I_j
  &:=\frac{1}{\vol(\Omega_j)}\int_{\Omega_j}
  \bigl(\Delta B-(m-1)\bigr)\,dV,\\
  J_j
  &:=\frac{1}{\vol(\Omega_j)}\int_{\partial\Omega_j}
  \bigl(1-\langle N,\nu_j\rangle\bigr)\,dA.
\end{align*}
By \eqref{eq:isoperimetric-sequence}, $E_j\to0$.  Proposition~\ref{prop:busemann-comparison}
gives $|N|=1$ and $\Delta B\geq m-1$, so $I_j,J_j\geq0$.
Lemma~\ref{lem:yau-defect}, divided by $\vol(\Omega_j)$, gives
\[
  E_j=I_j+J_j.
\]
Consequently,
\[
  0\leq I_j\leq E_j,
  \qquad
  0\leq J_j\leq E_j.
\]
Thus $I_j\to0$ and $J_j\to0$.  The first limit is
\eqref{eq:interior-defect}.

Since $|N|=|\nu_j|=1$ on $\partial\Omega_j$,
\[
  |\nu_j-N|^2
  =|\nu_j|^2+|N|^2-2\langle N,\nu_j\rangle
  =2\bigl(1-\langle N,\nu_j\rangle\bigr).
\]
Therefore
\[
  \frac{1}{\vol(\Omega_j)}
  \int_{\partial\Omega_j}|\nu_j-N|^2\,dA
  =2J_j\longrightarrow0.
\]
\end{proof}

\section{Horospherical suspension and measure localization}
\label{sec:suspension-localization}

Fix an isoperimetric minimizing sequence $\{\Omega_j\}$ as in
Proposition~\ref{prop:yau-defects} and a point
$\xi_0\in\partial_\infty X$.  Normalized volume measures on the noncompact
space $X$ may lose mass at infinity.  After passage to the compact
horospherical suspension, a subsequence converges weakly.
Proposition~\ref{prop:yau-defects}(i) shows that every such limit is
supported on the equality set of Busemann Laplacian comparison.

Recall that the deck group $\Gamma$ acts diagonally on
$X\times\partial_\infty X$ by
\[
  \gamma\cdot(x,\xi)=(\gamma x,\gamma\xi).
\]
Set $Z:=(X\times\partial_\infty X)/\Gamma$.  This is the visual-boundary
model of the horospherical suspension of the
regular cover $X\to M$ (see
\cite{Ledrappier2010}*{Section~1}; see
\cite{LedrappierWang2010}*{Section~2}).

The compactness of $Z$ follows from its identification with $SM$.  For
$(x,\xi)\in X\times\partial_\infty X$, let $c_{x,\xi}$ be the unique
unit-speed geodesic ray from $x$ to $\xi$, and set
\[
  v_{x,\xi}:=\dot c_{x,\xi}(0)\in S_xX.
\]
Define
\[
  \widetilde\Phi:X\times\partial_\infty X\longrightarrow SX,
  \qquad
  \widetilde\Phi(x,\xi)=v_{x,\xi}.
\]
If $\pi_X:SX\to X$ denotes the footpoint projection and $v_+$ the
forward endpoint of the geodesic with initial vector $v$, then
\[
  \widetilde\Phi^{-1}(v)=(\pi_X(v),v_+).
\]
The cone-topology results of
Section~\ref{sec:busemann-yau-defects} show that $\widetilde\Phi$ and
$\widetilde\Phi^{-1}$ are continuous
(see \cite{EO73}*{Sections~1--2}).  Thus
$\widetilde\Phi$ is a homeomorphism.

For $\gamma\in\Gamma$, the curve $\gamma\circ c_{x,\xi}$ is a unit-speed
geodesic ray from $\gamma x$ to $\gamma\xi$.  Uniqueness gives
$c_{\gamma x,\gamma\xi}=\gamma\circ c_{x,\xi}$.  Differentiating at
$t=0$ gives
\[
  \widetilde\Phi(\gamma x,\gamma\xi)
  =(d\gamma_x)(\widetilde\Phi(x,\xi)).
\]
Hence $\widetilde\Phi$ is $\Gamma$-equivariant for the differential
action on $SX$.  Since $SX/\Gamma$ is canonically identified with $SM$,
it induces the homeomorphism
\begin{equation}\label{eq:Z-SM-homeo}
  \overline\Phi:Z\longrightarrow SM,
  \qquad
  \overline\Phi([x,\xi])=d\pi_x(v_{x,\xi}),
\end{equation}
where $\pi:(X,g_X)\to(M,g)$ is the universal Riemannian covering.
Because $M$ is closed, $SM$ is compact, and therefore so is $Z$.

The Busemann--Laplacian term in the interior defect descends to $Z$.  On
$X\times\partial_\infty X$, set
\[
  \widetilde D(x,\xi):=\Delta B_\xi(x)-(m-1).
\]
Proposition~\ref{prop:busemann-comparison}(ii)--(iii) shows that
$\widetilde D$ is continuous and nonnegative.  The equivariance of
Busemann functions under isometries, established in
Section~\ref{sec:busemann-yau-defects}, implies
\[
  \Delta B_{\gamma\xi}(\gamma x)=\Delta B_\xi(x)
  \qquad(\gamma\in\Gamma).
\]
Thus $\widetilde D$ is $\Gamma$-invariant and descends to a continuous
function $D:Z\to[0,\infty)$, given by
\begin{equation}\label{eq:D-on-Z}
  D([x,\xi])=\Delta B_\xi(x)-(m-1).
\end{equation}

Set $v_j:=\vol(\Omega_j)$, and define
$\iota_{\xi_0}:X\to Z$ by $\iota_{\xi_0}(x)=[x,\xi_0]$.
Push the normalized volume measure of $\Omega_j$ forward by
$\iota_{\xi_0}$:
\begin{equation}\label{eq:mu-j}
  \mu_j=(\iota_{\xi_0})_*
  \left(\frac{\mathbf{1}_{\Omega_j}}{v_j}\,dV\right).
\end{equation}
Since $v_j=\vol(\Omega_j)$,
\[
  \mu_j(Z)=\frac{\vol(\Omega_j)}{v_j}=1,
\]
so each $\mu_j$ is a Borel probability measure.  The space of Borel
probability measures on the compact metric space $Z$ is sequentially
compact in the topology of weak convergence
(see \cite{Bogachev2007}*{Theorem~8.6.7}).  After passing to a
subsequence,
\[
  \mu_j\rightharpoonup\mu
\]
for some Borel probability measure $\mu$ on $Z$.

\begin{proposition}
\label{prop:limit-support}
Every weak limit $\mu$ of the measures $\{\mu_j\}$ defined in
\eqref{eq:mu-j} satisfies
\[
  \supp\mu\subseteq D^{-1}(0).
\]
\end{proposition}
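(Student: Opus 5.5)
The plan is to integrate the continuous nonnegative function $D$ against $\mu_j$, recognize the result as the interior defect, and pass to the weak limit. By the definition \eqref{eq:mu-j} of $\mu_j$ as a pushforward, and by the formula \eqref{eq:D-on-Z} for $D$,
\[
  \int_Z D\,d\mu_j
  =\frac{1}{v_j}\int_{\Omega_j} D\bigl([x,\xi_0]\bigr)\,dV(x)
  =\frac{1}{\vol(\Omega_j)}\int_{\Omega_j}\bigl(\Delta B_{\xi_0}(x)-(m-1)\bigr)\,dV(x).
\]
This is exactly the normalized interior defect $I_j$ from Proposition~\ref{prop:yau-defects}(i), with $B=B_{\xi_0}$. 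Hence $\int_Z D\,d\mu_j\to0$.

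Next, $D$ is continuous on the compact metric space $Z$, by Proposition~\ref{prop:busemann-comparison}(iii) together with the $\Gamma$-invariance noted above. It is therefore bounded and continuous, and weak convergence along the chosen subsequence gives
\[
  \int_Z D\,d\mu=\lim_{j}\int_Z D\,d\mu_j=0.
\]
The subsequence limit of $I_j$ is still $0$, since the full sequence $I_j$ tends to $0$.

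To conclude, we use $D\geq0$. Suppose $z\in\supp\mu$ and $D(z)>0$. By continuity there is an open neighborhood $U$ of $z$ on which $D>D(z)/2$. Since $z$ lies in the support, $\mu(U)>0$, and so
\[
  \int_Z D\,d\mu\geq \tfrac{D(z)}{2}\,\mu(U)>0,
\]
which is a contradiction. Hence $D=0$ on $\supp\mu$, that is, $\supp\mu\subseteq D^{-1}(0)$. Equivalently, $D^{-1}(0)$ is closed and the complement $\{D>0\}$ is an open $\mu$-null set.

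There is no serious obstacle in this argument. The only points requiring care are the joint continuity of $D$ on $Z$, which is already supplied by Proposition~\ref{prop:busemann-comparison}(iii), and the observation that the pushforward through $\iota_{\xi_0}$ turns $\int_Z D\,d\mu_j$ exactly into the interior defect. The hypothesis $-A^2\leq\secg$, which is needed for $C^2$ regularity, holds automatically on the universal cover of the closed manifold $M$ by compactness.
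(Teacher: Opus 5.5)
Your proof is correct and follows the paper's argument essentially verbatim. You identify $\int_Z D\,d\mu_j$ with the normalized interior defect of Proposition~\ref{prop:yau-defects}(i), pass to the weak limit using continuity of $D$ on the compact space $Z$, and conclude from $D\geq0$ and the definition of the support, exactly as the paper does (the paper phrases the last step with a neighborhood on which $D\geq\varepsilon$ rather than $D>D(z)/2$).
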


\begin{proof}
Let $\mu_j\rightharpoonup\mu$ along a subsequence.  Equations
\eqref{eq:mu-j} and \eqref{eq:D-on-Z} give
\begin{align*}
  \int_ZD\,d\mu_j
  &=\frac1{v_j}\int_{\Omega_j}
  D\bigl(\iota_{\xi_0}(x)\bigr)\,dV(x)\\
  &=\frac{1}{\vol(\Omega_j)}\int_{\Omega_j}
  \bigl(\Delta B_{\xi_0}(x)-(m-1)\bigr)\,dV(x).
\end{align*}
By Proposition~\ref{prop:yau-defects}(i), the last expression tends to
zero.  Therefore
\[
  \int_ZD\,d\mu_j\longrightarrow0.
\]
Since $D$ is continuous on $Z$, weak convergence yields
\[
  \int_ZD\,d\mu
  =\lim_{j\to\infty}\int_ZD\,d\mu_j
  =0.
\]

Suppose that $D(z)>0$ for some $z\in\supp\mu$.  By continuity, there are
an open neighborhood $U$ of $z$ and $\varepsilon>0$ such that
$D\geq\varepsilon$ on $U$.  The definition of the support gives
$\mu(U)>0$.  Since $D\geq0$ on $Z$,
\[
  0=\int_ZD\,d\mu
  \geq\int_UD\,d\mu
  \geq\varepsilon\mu(U)>0,
\]
a contradiction.  Hence $D=0$ on $\supp\mu$.
\end{proof}

\section{Strong stable leaves and leafwise divergence}
\label{sec:strong-stable-divergence}
Section~\ref{sec:suspension-localization} yields a probability measure
$\mu$ on the compact suspension $Z$ with $\supp\mu\subseteq D^{-1}(0)$.
Boundary-normal convergence in
\eqref{eq:normal-alignment} implies that $\mu$ annihilates divergence
along strong stable leaves.  In the boundary coordinates
$X\times\partial_\infty X$, a weak stable leaf has fixed forward endpoint,
and its strong stable leaves are the level sets of the corresponding
Busemann function.  These horospheres coincide with the dynamical strong
stable sets, and they satisfy the leafwise Gauss--Green formula established
below.

\subsection{Horospherical geometry}
The strong-stability estimates require connected horospheres and a uniform
bound for the tangential derivative of their unit normal.  Both statements
follow from Busemann comparison.

\begin{lemma}
\label{lem:horospherical-geometry}
Let $(X^m,g_X)$, $m\geq2$, be a complete simply connected Riemannian
manifold satisfying $-A^2\leq\secg_{g_X}\leq-1$ for some $A\geq1$.
For $\xi\in\partial_\infty X$, let $B_\xi$ be a Busemann function
centered at $\xi$, and set
\[
  H_{\xi,c}:=B_\xi^{-1}(c)
  \quad(c\in\R).
\]
Then:
\begin{enumerate}[label=\textup{(\roman*)}]
\item The horosphere $H_{\xi,c}$ is a connected embedded $C^2$
hypersurface with unit normal $\nabla B_\xi$.  For every
$x\in H_{\xi,c}$,
\begin{equation}\label{eq:tangent-horosphere}
  T_xH_{\xi,c}
  =\ker dB_\xi|_x
  =\bigl(\nabla B_\xi(x)\bigr)^\perp.
\end{equation}

\item With the sign convention
\begin{equation}\label{eq:horosphere-shape-operator}
  S_{\xi,x}:T_xH_{\xi,c}\longrightarrow T_xH_{\xi,c},
  \qquad
  S_{\xi,x}(V)=\nabla_V\nabla B_\xi,
\end{equation}
the shape operator is self-adjoint and satisfies
\begin{equation}\label{eq:horosphere-shape-bound}
\begin{aligned}
  \operatorname{Id}&\leq S_{\xi,x}\leq A\operatorname{Id}
  &&\text{as quadratic forms},\\
  |S_{\xi,x}V|&\leq A|V|
  &&\text{for }V\in T_xH_{\xi,c}.
\end{aligned}
\end{equation}
\end{enumerate}
\end{lemma}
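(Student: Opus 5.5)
Part (i) follows from Proposition~\ref{prop:busemann-comparison}(i) together with a flow argument. Since $B_\xi\in C^2(X)$ and $|\nabla B_\xi|=1$ everywhere, every $c\in\R$ is a regular value. The implicit function theorem therefore makes $H_{\xi,c}=B_\xi^{-1}(c)$ a closed embedded $C^2$ hypersurface. Its tangent space is $\ker dB_\xi|_x=(\nabla B_\xi(x))^\perp$, and its unit normal is $\nabla B_\xi$, which gives \eqref{eq:tangent-horosphere}.

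For connectedness I will use the flow $\phi_t$ of the $C^1$ unit vector field $-\nabla B_\xi$. This field is complete because it has unit length on a complete manifold. Its flow lines are exactly the rays $c_{x,\xi}$, extended to complete geodesics, by \eqref{eq:unit-busemann-gradient}. Along a flow line $\frac{d}{dt}B_\xi(\phi_t x)=-1$, so the map
\[
  H_{\xi,c}\times\R\to X,\qquad (y,t)\mapsto\phi_t(y),
\]
is a bijection. Each $x\in X$ reaches $H_{\xi,c}$ at the unique time $t=B_\xi(x)-c$, and the inverse $x\mapsto(\phi_{B_\xi(x)-c}(x),\,B_\xi(x)-c)$ is continuous. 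The map is therefore a homeomorphism $H_{\xi,c}\times\R\cong X$. Since $X$ is connected, so is $H_{\xi,c}$.

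For part (ii), fix $x\in H_{\xi,c}$ and $V\in T_xH_{\xi,c}$. Differentiating $|\nabla B_\xi|^2=1$ gives $\langle\nabla_V\nabla B_\xi,\nabla B_\xi\rangle=0$, so $S_{\xi,x}V$ is tangent to $H_{\xi,c}$ and $S_{\xi,x}$ is an endomorphism of $T_xH_{\xi,c}$. Self-adjointness is the symmetry of the Hessian:
\[
  \langle S_{\xi,x}V,W\rangle=\nabla^2B_\xi(V,W).
\]
This expression is continuous in the base point and uses only $B_\xi\in C^2$.

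On $T_xH_{\xi,c}$ we have $dB_\xi(V)=0$, so \eqref{eq:busemann-hessian-comparison} restricts to
\[
  |V|^2\le\langle S_{\xi,x}V,V\rangle\le A|V|^2,
\]
which is the quadratic-form bound. A self-adjoint operator whose eigenvalues lie in $[1,A]$ has operator norm at most $A$, which gives $|S_{\xi,x}V|\le A|V|$.

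I expect the only delicate step to be connectedness in (i), specifically the continuity of the inverse of the flow map. This uses only the continuity of $B_\xi$ and of the flow, and the latter holds because $\nabla B_\xi$ is $C^1$. Everything else is a direct restriction of Proposition~\ref{prop:busemann-comparison}.
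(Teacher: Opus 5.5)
Your proposal is correct and follows essentially the same route as the paper: the regular-level-set theorem for (i), connectedness via the gradient-flow product decomposition $H_{\xi,c}\times\R\cong X$ (you flow along $-\nabla B_\xi$ where the paper flows along $\nabla B_\xi$), and restriction of the Hessian comparison plus the spectral bound for (ii). One trivial sign slip: with your flow $\phi_t$ of $-\nabla B_\xi$, the inverse of $(y,t)\mapsto\phi_t(y)$ is $x\mapsto(\phi_{B_\xi(x)-c}(x),\,c-B_\xi(x))$, not $B_\xi(x)-c$ in the second slot. This does not affect continuity or the connectedness argument.
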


\begin{proof}
Proposition~\ref{prop:busemann-comparison}(i) gives
$B_\xi\in C^2(X)$ and $|\nabla B_\xi|=1$.  Hence $dB_\xi$ never vanishes,
so every $c\in\R$ is a regular value.  The regular-level-set theorem gives
an embedded $C^2$ hypersurface with
\[
  T_xH_{\xi,c}=\ker dB_\xi|_x.
\]
Since $dB_\xi(V)=\langle\nabla B_\xi,V\rangle$, this kernel is
$\bigl(\nabla B_\xi(x)\bigr)^\perp$, and $\nabla B_\xi$ is a unit
normal.

For connectedness, consider the $C^1$ vector field $\nabla B_\xi$.
It is complete: its integral curves have unit
speed, so an integral curve with a finite maximal endpoint remains in a
closed bounded ball.  This ball is compact by Hopf--Rinow, and the local
existence theorem for ordinary differential equations extends the curve
past that endpoint, a contradiction.  Let $\psi_t$ be the global flow of
$\nabla B_\xi$.  Along an integral curve,
\[
  \frac{d}{dt}B_\xi(\psi_t(x))
  =dB_\xi(\nabla B_\xi)
  =|\nabla B_\xi|^2
  =1.
\]
Thus $B_\xi(\psi_t(x))=B_\xi(x)+t$, and every flow line meets
$H_{\xi,c}$ exactly once.  The map
\[
  F_c:H_{\xi,c}\times\R\longrightarrow X,
  \qquad F_c(y,t)=\psi_t(y),
\]
is therefore a $C^1$ diffeomorphism with inverse
\[
  x\longmapsto
  \bigl(\psi_{c-B_\xi(x)}(x),B_\xi(x)-c\bigr).
\]
Since $X$ is connected, so is $H_{\xi,c}\times\R$.  Its projection onto
$H_{\xi,c}$ is continuous and surjective; hence $H_{\xi,c}$ is
connected, proving \textup{(i)}.

If $V\in T_xH_{\xi,c}$, then
$\langle\nabla_V\nabla B_\xi,\nabla B_\xi\rangle
=\frac12V(|\nabla B_\xi|^2)=0$; hence
$\nabla_V\nabla B_\xi$ lies in $T_xH_{\xi,c}$, so
\eqref{eq:horosphere-shape-operator} is well defined.  For
$V,W\in T_xH_{\xi,c}$,
\[
  \langle S_{\xi,x}V,W\rangle
  =\nabla^2B_\xi(V,W).
\]
Thus $S_{\xi,x}$ is self-adjoint.  Since $dB_\xi(V)=0$, the Hessian
comparison in Proposition~\ref{prop:busemann-comparison}(ii) gives
\[
  |V|^2\leq\langle S_{\xi,x}V,V\rangle\leq A|V|^2.
\]
Hence the spectrum of the self-adjoint operator $S_{\xi,x}$ lies in
$[1,A]$, so
$\lVert S_{\xi,x}\rVert_{\mathrm{op}}\leq A$.  Hence
$|S_{\xi,x}V|\leq A|V|$ for every $V\in T_xH_{\xi,c}$, completing
\textup{(ii)}.
\end{proof}

\subsection{Sasaki geometry and strong stability}
Strong stability requires convergence in the unit tangent bundle, not
only convergence of the footpoints in $X$.  The Sasaki metric measures the
motion of both the footpoint and the tangent direction.

Let
\[
  SX:=\{v\in TX:|v|=1\},
  \qquad
  \pi_X:SX\longrightarrow X,
\]
where $\pi_X(v)=x$ for $v\in T_xX$.  The fiber over $x\in X$ is
\[
  S_xX:=\pi_X^{-1}(x)=\{v\in T_xX:|v|=1\}.
\]
Fix $v\in S_xX$.  For $U\in T_vSX$, choose a $C^1$ curve
$\sigma:(-\varepsilon,\varepsilon)\to SX$ with
$\sigma(0)=v$ and $\dot\sigma(0)=U$, and put
$c=\pi_X\circ\sigma$.  The connection map at $v$ is
\[
  K_v(U):=
  \left.\frac{D}{ds}\sigma(s)\right|_{s=0}\in T_xX.
\]
The value of $K_v(U)$ is independent of the choice of $\sigma$.  Set
\[
  U^{\mathrm h}:=d\pi_X(U)=\dot c(0),
  \qquad
  U^{\mathrm v}:=K_v(U).
\]
Since $|\sigma(s)|=1$,
\[
  \langle U^{\mathrm v},v\rangle
  =\left.\frac12\frac{d}{ds}|\sigma(s)|^2\right|_{s=0}
  =0.
\]
Thus $U^{\mathrm v}\in v^\perp$, and the Levi--Civita connection gives the
linear isomorphism
\[
  T_vSX\longrightarrow T_xX\oplus v^\perp,
  \qquad
  U\longmapsto(U^{\mathrm h},U^{\mathrm v}).
\]

Under this identification, the Sasaki metric is
\[
  g_{\mathrm{Sasaki},v}(U,W)
  :=\langle U^{\mathrm h},W^{\mathrm h}\rangle_x
    +\langle U^{\mathrm v},W^{\mathrm v}\rangle_x
\]
(see \cite{Sasaki1958}*{Section~3, formula~(3.2)}).  Hence
\begin{equation}\label{eq:sasaki-metric}
  |U|_{\mathrm{Sasaki}}^2
  =|U^{\mathrm h}|^2+|U^{\mathrm v}|^2.
\end{equation}

Let $\alpha:[a,b]\to SX$ be piecewise $C^1$, and set
$c=\pi_X\circ\alpha$.  At each differentiability point, $\alpha(s)$ is
a unit vector field along $c(s)$, and \eqref{eq:sasaki-metric} gives
\[
  |\dot\alpha(s)|_{\mathrm{Sasaki}}^2
  =|\dot c(s)|^2+
    \left|\frac{D}{ds}\alpha(s)\right|^2.
\]
Its Sasaki length is therefore
\[
  L_{\mathrm{Sasaki}}(\alpha)
  =\int_a^b
  \left(
    |\dot c(s)|^2+
    \left|\frac{D}{ds}\alpha(s)\right|^2
  \right)^{1/2}ds.
\]
In particular,
\[
  L_X(c)=\int_a^b|\dot c(s)|\,ds
  \leq L_{\mathrm{Sasaki}}(\alpha).
\]
Let $d_{SX}$ be the Riemannian distance induced by the Sasaki metric.
Taking the infimum over curves $\alpha$ joining $v$ to $w$ gives
\begin{equation}\label{eq:footpoint-one-lipschitz}
  d_X\bigl(\pi_X(v),\pi_X(w)\bigr)
  \leq d_{SX}(v,w),
  \qquad v,w\in SX.
\end{equation}
Thus $\pi_X:(SX,d_{SX})\to(X,d_X)$ is $1$-Lipschitz.

Every $\gamma\in\Gamma$ preserves the Levi--Civita connection, so
$d\gamma:SX\to SX$ is an isometry of the Sasaki metric.  Equip
$SM=SX/\Gamma$ with the quotient Sasaki metric, and denote its distance
by $d_{SM}$.  If $v,w\in SX$ lift $\bar v,\bar w\in SM$, respectively,
then
\[
  d_{SM}(\bar v,\bar w)
  =\inf_{\gamma\in\Gamma}d_{SX}\bigl(v,d\gamma(w)\bigr).
\]

\begin{definition}\label{def:stable-sets}
For $v\in SX$, let $c_v:\R\to X$ be the geodesic determined by
$c_v(0)=\pi_X(v)$ and $\dot c_v(0)=v$.  The \emph{geodesic flow} on
$SX$ is
\[
  \varphi^t(v):=\dot c_v(t),
  \qquad
  \pi_X(\varphi^tv)=c_v(t).
\]
For every $\gamma\in\Gamma$, one has
$c_{d\gamma(v)}=\gamma\circ c_v$ and
$\varphi^t(d\gamma(v))=d\gamma(\varphi^t(v))$.  Thus $\varphi^t$
induces a flow on $SM$, denoted by the same symbol.
\begin{enumerate}[label=\textup{(\roman*)}]
\item The \emph{strong stable set} of $v$ is
\[
  W^{ss}_{SX}(v)
  =\left\{w\in SX:
  d_{SX}(\varphi^tv,\varphi^tw)\longrightarrow0
  \text{ as }t\to+\infty
  \right\}.
\]
\item The \emph{weak stable set} of $v$ is
\[
  W^{ws}_{SX}(v)
  =\left\{w\in SX:
  \sup_{t\geq0}d_X\bigl(\pi_X(\varphi^tv),
  \pi_X(\varphi^tw)\bigr)<\infty
  \right\}.
\]
\item For $\bar v\in SM$, its \emph{strong stable set} is
\[
  W^{ss}_{SM}(\bar v)
  =\left\{\bar w\in SM:
  d_{SM}(\varphi^t\bar v,\varphi^t\bar w)\longrightarrow0
  \text{ as }t\to+\infty
  \right\}.
\]
\end{enumerate}
\end{definition}

By \eqref{eq:footpoint-one-lipschitz}, strong stability implies weak
stability: the footpoint distance tends to zero and is therefore bounded
on $[0,\infty)$.  The distinction is the flow parameter.  Weak stability
allows a fixed displacement along the common asymptotic direction,
whereas strong stability requires the full tangent vectors to converge at
equal times.

\begin{example}
\label{ex:stable-models}
\emph{Euclidean space.}
Identify $S\R^m$ with $\R^m\times S^{m-1}$.  If
$v=(x,u)$ and $w=(y,u')$, then
$\varphi^t(x,u)=(x+tu,u)$ and
\[
  d_{S\R^m}\bigl(\varphi^tv,\varphi^tw\bigr)^2
  =|x-y+t(u-u')|^2+d_{S^{m-1}}(u,u')^2.
\]
Thus $W^{ws}_{S\R^m}(x,u)=\R^m\times\{u\}$ and
$W^{ss}_{S\R^m}(x,u)=\{(x,u)\}$.

\par\medskip
\noindent\emph{Hyperbolic space.}
In the upper half-space model,
\[
  \Hh^m=\{(z,r):z\in\R^{m-1},\ r>0\},
  \qquad
  g_{\Hh}=r^{-2}(dz^2+dr^2).
\]
Let
\[
  V_{z,r}:=r\partial_r\big|_{(z,r)}.
\]
This unit vector points toward $\infty\in\partial_\infty\Hh^m$, and
\[
  \varphi^t(V_{z,r})=V_{z,re^t}.
\]
The dilation $\delta_t(z,r)=(e^{-t}z,e^{-t}r)$ is an isometry of
$\Hh^m$.  Its differential preserves the Sasaki metric and gives
\begin{align*}
  &d_{S\Hh^m}\bigl(\varphi^t(V_{z,r}),
                    \varphi^t(V_{z',r'})\bigr) \\
  &\qquad
  =d_{S\Hh^m}\bigl(V_{e^{-t}z,r},V_{e^{-t}z',r'}\bigr)
  \longrightarrow d_{S\Hh^m}(V_{0,r},V_{0,r'})
  =\left|\log\frac{r'}{r}\right|.
\end{align*}
The footpoint estimate \eqref{eq:footpoint-one-lipschitz} gives the lower
bound
\[
  d_{S\Hh^m}(V_{0,r},V_{0,r'})
  \geq d_{\Hh^m}((0,r),(0,r'))
  =\left|\log\frac{r'}r\right|.
\]
Along the vertical geodesic segment from $(0,r)$ to $(0,r')$, the upward
unit tangent vector is parallel.  Its lift to $S\Hh^m$ therefore has the
same Sasaki length as the base segment and gives the reverse inequality.
Two hyperbolic rays remain a bounded distance apart exactly when they have
the same forward endpoint.  It follows that
\begin{align*}
  W^{ws}_{S\Hh^m}(V_{z,r})
  &=\{V_{z',r'}:z'\in\R^{m-1},\ r'>0\},\\
  W^{ss}_{S\Hh^m}(V_{z,r})
  &=\{V_{z',r}:z'\in\R^{m-1}\}.
\end{align*}

Normalize the Busemann function at $\infty$ by $B_\infty(0,1)=0$.
Then $B_\infty(z,r)=-\log r$.  For an arbitrary
$v=v_{x,\xi}\in S\Hh^m$, the preceding formulas become
\[
  W^{ws}_{S\Hh^m}(v)
  =\{v_{y,\xi}:y\in\Hh^m\},
  \qquad
  W^{ss}_{S\Hh^m}(v)
  =\{v_{y,\xi}:B_\xi(y)=B_\xi(x)\}.
\]
\end{example}

In the boundary coordinates of Section~3, fix
\[
  v=\widetilde\Phi(x,\xi)=v_{x,\xi},
  \qquad
  w=\widetilde\Phi(y,\eta)=v_{y,\eta}.
\]
The rays determined by $v$ and $w$ remain a bounded distance apart if and
only if they represent the same point of the visual boundary.  Therefore
\begin{equation}\label{eq:ws-upstairs}
  w\in W^{ws}_{SX}(v)
  \quad\Longleftrightarrow\quad
  \eta=\xi.
\end{equation}
The lifted weak stable set is thus
\[
  \tWws(x,\xi)
  :=\widetilde\Phi^{-1}\bigl(W^{ws}_{SX}(v)\bigr)
  =X\times\{\xi\}.
\]
In particular, $\tWws(x,\xi)$ depends only on $\xi$.

Within $X\times\{\xi\}$, define the equal-height set through $(x,\xi)$ by
\begin{equation}\label{eq:ss-upstairs}
  \tWss(x,\xi)
  :=\{(y,\xi)\in X\times\{\xi\}:B_\xi(y)=B_\xi(x)\}
  =H_{\xi,B_\xi(x)}\times\{\xi\}.
\end{equation}
This is the horosphere through $x$ with forward endpoint $\xi$.

For $p\in X$, use $c_{p,\xi}:\R\to X$ for the complete geodesic
extending the ray from $p$ to $\xi$.  Let $w=\widetilde\Phi(y,\xi)$.
By the definition of the geodesic flow,
\[
  \pi_X(\varphi^tv)=c_{x,\xi}(t),
  \qquad
  \pi_X(\varphi^tw)=c_{y,\xi}(t).
\]
Proposition~\ref{prop:busemann-comparison} gives
$\nabla B_\xi(c_{p,\xi}(t))=-\dot c_{p,\xi}(t)$.  Therefore, for every
$p\in X$,
\[
  \frac{d}{dt}B_\xi(c_{p,\xi}(t))
  =\left\langle\nabla B_\xi(c_{p,\xi}(t)),
    \dot c_{p,\xi}(t)\right\rangle
  =-1.
\]
Taking $p=x$ and $p=y$ and integrating from $0$ to $t$ gives
\begin{equation}\label{eq:busemann-height-flow}
  B_\xi\bigl(\pi_X(\varphi^tv)\bigr)=B_\xi(x)-t,
  \qquad
  B_\xi\bigl(\pi_X(\varphi^tw)\bigr)=B_\xi(y)-t.
\end{equation}
The \emph{flow line} through $(y,\xi)$ is the orbit of the geodesic flow
in the boundary coordinates:
\[
  \left\{\widetilde\Phi^{-1}(\varphi^tw):t\in\R\right\}
  =\left\{(c_{y,\xi}(t),\xi):t\in\R\right\}.
\]
Set $a=B_\xi(y)-B_\xi(x)$.  Then
\[
  B_\xi\bigl(\pi_X(\varphi^aw)\bigr)
  =B_\xi(y)-a=B_\xi(x).
\]
Since $B_\xi(c_{y,\xi}(t))=B_\xi(y)-t$, this value of $a$ is unique.
Thus every flow line in $X\times\{\xi\}$ meets $\tWss(x,\xi)$ exactly
once.  The map
\[
  \Psi_{x,\xi}:\tWss(x,\xi)\times\R
  \longrightarrow\tWws(x,\xi),
  \qquad
  ((p,\xi),s)\longmapsto
  \bigl(\pi_X(\varphi^sv_{p,\xi}),\xi\bigr),
\]
has inverse
\[
  (y,\xi)\longmapsto
  \left(
    \bigl(c_{y,\xi}(B_\xi(y)-B_\xi(x)),\xi\bigr),
    B_\xi(x)-B_\xi(y)
  \right).
\]
Lemma~\ref{lem:horospherical-geometry} shows that the flow of
$\nabla B_\xi$ is complete and $C^1$; hence $\Psi_{x,\xi}$ is a $C^1$
diffeomorphism.  Since $H_{\xi,B_\xi(x)}$ is a hypersurface
in $X^m$,
\[
  \dim\tWws(x,\xi)=m,
  \qquad
  \dim\tWss(x,\xi)=m-1.
\]

To pass from $X\times\partial_\infty X$ to the quotient $Z$, let
$\gamma\in\Gamma$.  The invariance of the Busemann cocycle under
isometries shows that, for some constant $C(\gamma,\xi)$ independent of
$y$,
\begin{equation}\label{eq:stable-equivariance}
  B_{\gamma\xi}(\gamma y)
  =B_\xi(y)+C(\gamma,\xi).
\end{equation}
For $U\in T_yX$, differentiating \eqref{eq:stable-equivariance} gives
\[
\begin{aligned}
  \bigl\langle \nabla B_{\gamma\xi}(\gamma y),d\gamma_yU\bigr\rangle
  &=dB_{\gamma\xi}|_{\gamma y}(d\gamma_yU)\\
  &=d(B_{\gamma\xi}\circ\gamma)|_y(U)\\
  &=dB_\xi|_y(U)\\
  &=\bigl\langle \nabla B_\xi(y),U\bigr\rangle.
\end{aligned}
\]
Since $\gamma$ is an isometry,
\[
  \bigl\langle \nabla B_\xi(y),U\bigr\rangle
  =\bigl\langle d\gamma_y\nabla B_\xi(y),d\gamma_yU\bigr\rangle.
\]
The preceding identities hold for every $U\in T_yX$, and $d\gamma_y$ is
surjective.  Therefore
\[
  \nabla B_{\gamma\xi}(\gamma y)
  =d\gamma_y\nabla B_\xi(y).
\]
Equation~\eqref{eq:stable-equivariance} also implies that
\[
  \gamma(H_{\xi,r})
  =H_{\gamma\xi,r+C(\gamma,\xi)}.
\]
It follows from the definitions of \(\tWws\) and \(\tWss\) that the
diagonal action satisfies
\[
  \gamma\bigl(\tWws(x,\xi)\bigr)
  =\tWws(\gamma x,\gamma\xi),
  \qquad
  \gamma\bigl(\tWss(x,\xi)\bigr)
  =\tWss(\gamma x,\gamma\xi).
\]
Let $p:X\times\partial_\infty X\to Z$ be the quotient map, and set
\[
  \Wws([x,\xi]):=p\bigl(\tWws(x,\xi)\bigr),
  \qquad
  \Wss([x,\xi]):=p\bigl(\tWss(x,\xi)\bigr).
\]
These definitions do not depend on the representative of $[x,\xi]$.
If $(x',\xi')=\gamma(x,\xi)$, then equivariance and
$p\circ\gamma=p$ give
\[
  p\bigl(\tWws(x',\xi')\bigr)
  =p\bigl(\tWws(x,\xi)\bigr),
  \qquad
  p\bigl(\tWss(x',\xi')\bigr)
  =p\bigl(\tWss(x,\xi)\bigr).
\]
Thus $\Wws$ and $\Wss$ define, respectively, the weak stable and
horospherical sets on $Z$.

The definition of $\tWss(x,\xi)$ fixes both the forward endpoint $\xi$
and the Busemann height $B_\xi(x)$.  See
\cite{Eberlein1973}*{Section~3} for the relation between equal-height
horospheres and strong stable sets, and
\cite{HirschPughShub1977}*{Theorem~4.1 and Section~5} for general stable
manifold theory.  The relevant horospherical contraction estimate is the
case $a=1$ and $b=A$ of
\cite{HIH77}*{Proposition~3.2}.

\begin{proposition}
\label{prop:stable-leaf-identification}
Let $(M^m,g)$, $m\geq2$, be a closed connected Riemannian manifold
satisfying $-A^2\leq\secg_g\leq-1$ for some $A\geq1$.  Let
$(X,g_X)$ be its universal Riemannian cover, let $\Gamma$ be the deck
group, and set
$Z=(X\times\partial_\infty X)/\Gamma$.  Let
$\overline\Phi:Z\to SM$ be the homeomorphism induced by
$\widetilde\Phi(x,\xi)=v_{x,\xi}$.  Then, for
$(x,\xi)\in X\times\partial_\infty X$ and
$v=\widetilde\Phi(x,\xi)$,
\begin{equation}\label{eq:ss-identification-upstairs}
  \widetilde\Phi\bigl(\tWss(x,\xi)\bigr)
  =W^{ss}_{SX}(v).
\end{equation}
For every $z\in Z$,
\[
  \overline\Phi\bigl(\Wss(z)\bigr)
  =W^{ss}_{SM}\bigl(\overline\Phi(z)\bigr).
\]
\end{proposition}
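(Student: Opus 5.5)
We prove the upstairs identity \eqref{eq:ss-identification-upstairs} by two inclusions, using the horospherical contraction estimate of \cite{HIH77}*{Proposition~3.2} for ``$\subseteq$'' and Busemann height bookkeeping \eqref{eq:busemann-height-flow} for ``$\supseteq$''. The downstairs statement then follows by equivariance and a lifting argument.

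\emph{Inclusion $\subseteq$.} Let $y\in H_{\xi,B_\xi(x)}$ and $w=v_{y,\xi}$. By \eqref{eq:busemann-height-flow}, $\pi_X(\varphi^tv)$ and $\pi_X(\varphi^tw)$ both lie on the horosphere $H_{\xi,B_\xi(x)-t}$. Join $x$ to $y$ by a $C^1$ curve $\sigma$ inside the horosphere; this is possible because the horosphere is connected by Lemma~\ref{lem:horospherical-geometry}(i). Push $\sigma$ forward along the geodesics to $\xi$ to obtain $\sigma_t(s)=c_{\sigma(s),\xi}(t)$, which lies in $H_{\xi,B_\xi(x)-t}$.
\begin{itemize}
\item The variation field $\partial_s\sigma_t$ is a stable Jacobi field along $c_{\sigma(s),\xi}$, perpendicular to the ray. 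Since $\secg\leq-1$, the HIH contraction estimate gives $|\partial_s\sigma_t|\leq e^{-t}|\dot\sigma(s)|$.
\item The lifted curve $\alpha_t(s)=v_{\sigma_t(s),\xi}=-\nabla B_\xi(\sigma_t(s))$ has vertical part $\frac{D}{ds}\alpha_t=-S_{\xi}(\partial_s\sigma_t)$. By \eqref{eq:horosphere-shape-bound} this has norm at most $A|\partial_s\sigma_t|$.
\end{itemize}
By \eqref{eq:sasaki-metric} we get $L_{\mathrm{Sasaki}}(\alpha_t)\leq\sqrt{1+A^2}\,e^{-t}L(\sigma)\to0$. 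Hence $w\in W^{ss}_{SX}(v)$.

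\emph{Inclusion $\supseteq$.} Let $w\in W^{ss}_{SX}(v)$. Then $w$ is weakly stable by \eqref{eq:footpoint-one-lipschitz}, so by \eqref{eq:ws-upstairs} $w=v_{y,\xi}$ for some $y$. Set $a=B_\xi(y)-B_\xi(x)$. By \eqref{eq:busemann-height-flow} the footpoints satisfy
\[
B_\xi(\pi_X\varphi^tw)-B_\xi(\pi_X\varphi^tv)=a \quad\text{for all } t.
\]
Since $B_\xi$ is $1$-Lipschitz,
\[
|a|\leq d_X(\pi_X\varphi^tv,\pi_X\varphi^tw)\leq d_{SX}(\varphi^tv,\varphi^tw)\to0 .
\]
Hence $a=0$, so $y\in H_{\xi,B_\xi(x)}$.

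\emph{Passage to $SM$.} The map $\overline\Phi\circ p=d\pi\circ\widetilde\Phi$, and the covering $SX\to SM$ is a local isometry for the Sasaki metrics. So $d_{SM}\leq d_{SX}$ on lifts, which gives $d\pi(W^{ss}_{SX}(v))\subseteq W^{ss}_{SM}(\bar v)$, i.e.\ $\overline\Phi(\Wss(z))\subseteq W^{ss}_{SM}(\overline\Phi(z))$.

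For the converse, take $\bar w\in W^{ss}_{SM}(\bar v)$ and use compactness of $SM$. Choose $\varepsilon$ below the injectivity radius of the Sasaki covering, so that $\varepsilon$-balls in $SM$ lift isometrically. For $t\geq T$ we have $d_{SM}(\varphi^t\bar v,\varphi^t\bar w)<\varepsilon$. Pick the lift $w$ with $d_{SX}(\varphi^Tv,\varphi^Tw)<\varepsilon$. By continuity in $t$ and uniqueness of the nearby lift, $d_{SX}(\varphi^tv,\varphi^tw)=d_{SM}(\varphi^t\bar v,\varphi^t\bar w)$ for all $t\geq T$, which tends to $0$. Thus $w\in W^{ss}_{SX}(v)$, and the upstairs identity applies.

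\emph{Main obstacle.} The step I expect to need most care is the first inclusion: correctly identifying the vertical component of $\alpha_t$ with the shape operator applied to the stable Jacobi field, and invoking the $e^{-t}$ contraction of \cite{HIH77} with the right constants. The continuity-of-lift argument in the last step is routine but should be spelled out.
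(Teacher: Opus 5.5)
Your proposal is correct and follows essentially the same route as the paper. For $\subseteq$ you push a horospherical curve along the rays to $\xi$ and combine the $e^{-t}$ contraction with the shape-operator bound on the vertical part; for $\supseteq$ you use Busemann-height bookkeeping; and you finish with a covering argument on $SM$. The minor differences are that you cite \cite{HIH77}*{Proposition~3.2} for the contraction, where the paper derives it by Gronwall from the lower Hessian bound, and that you fix the lift by a discreteness-and-connectedness argument, where the paper lifts short minimizing geodesics and uses uniqueness of path lifting.
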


\begin{proof}
Fix $(x,\xi)\in X\times\partial_\infty X$ and set
$v=\widetilde\Phi(x,\xi)$.

\smallskip
\noindent\emph{Step 1: equal height implies strong stability on $SX$.}
Let $(y,\xi)\in\tWss(x,\xi)$ and put
$w=\widetilde\Phi(y,\xi)$.  By definition,
$B_\xi(y)=B_\xi(x)$.  Lemma~\ref{lem:horospherical-geometry} shows that
$H_{\xi,B_\xi(x)}$ is a connected $C^2$ hypersurface, so there is a
piecewise $C^2$ curve
\[
  \alpha:[0,1]\longrightarrow H_{\xi,B_\xi(x)},
  \qquad
  \alpha(0)=x,
  \quad
  \alpha(1)=y.
\]
For $s\in[0,1]$ and $t\geq0$, set
\[
  F(s,t):=c_{\alpha(s),\xi}(t),
  \qquad
  J(s,t):=\partial_sF(s,t).
\]
For fixed $s$, the curve $t\mapsto F(s,t)$ is the unit-speed ray from
$\alpha(s)$ toward $\xi$.  Proposition~\ref{prop:busemann-comparison}(i)
gives
\[
  \partial_tF=-\nabla B_\xi(F).
\]
Because $B_\xi(\alpha(s))=B_\xi(x)$ and $B_\xi$ decreases at unit speed
along every such ray,
\[
  B_\xi(F(s,t))=B_\xi(x)-t.
\]
The right-hand side is independent of $s$, hence
$dB_\xi(J)=0$.  Thus $J(s,t)$ is tangent to the horosphere
$H_{\xi,B_\xi(x)-t}$.  Moreover, the coordinate vector fields of $F$
commute, and the Levi--Civita connection is torsion-free.  Therefore
\[
  \nabla_{\partial_tF}J
  =\nabla_J\partial_tF
  =-\nabla_J\nabla B_\xi.
\]
Since $J$ is tangent to the horosphere through $F(s,t)$, the lower
Hessian bound in Proposition~\ref{prop:busemann-comparison}(ii) gives
\begin{align*}
  \frac{d}{dt}|J|^2
  &=2\langle\nabla_{\partial_tF}J,J\rangle
    =-2\nabla^2B_\xi(J,J)\\
  &\leq-2|J|^2.
\end{align*}
Gronwall's inequality, applied for each fixed $s$, gives
\begin{equation}\label{eq:horospherical-contraction}
  |J(s,t)|\leq e^{-t}|\alpha'(s)|.
\end{equation}
For every $t\geq0$, the curve $s\mapsto F(s,t)$ joins
$c_{x,\xi}(t)$ to $c_{y,\xi}(t)$.  Integrating
\eqref{eq:horospherical-contraction} in $s$ therefore gives
\[
  d_X\bigl(c_{x,\xi}(t),c_{y,\xi}(t)\bigr)
  \leq e^{-t}L(\alpha).
\]

To estimate the tangent directions, lift the variation to the curve
\[
  \beta_t(s)=\partial_tF(s,t)\in SX.
\]
Its endpoints are $\varphi^tv$ and $\varphi^tw$.  Under the horizontal--
vertical splitting of $T(SX)$, the components of $\beta_t'(s)$ are
\[
  J(s,t)
  \qquad\text{and}\qquad
  \nabla_J\partial_tF=-\nabla_J\nabla B_\xi.
\]
Because $J$ is tangent to the corresponding horosphere, the
shape-operator bound in Lemma~\ref{lem:horospherical-geometry}(ii),
together with \eqref{eq:horospherical-contraction}, yields
\[
  |\beta_t'(s)|_{\mathrm{Sasaki}}^2
  =|J(s,t)|^2+|\nabla_J\nabla B_\xi|^2
  \leq(1+A^2)e^{-2t}|\alpha'(s)|^2.
\]
Taking the length of $\beta_t$ gives
\[
  d_{SX}\bigl(\varphi^tv,\varphi^tw\bigr)
  \leq\sqrt{1+A^2}\,e^{-t}L(\alpha)
  \longrightarrow0.
\]
Thus $w\in W^{ss}_{SX}(v)$.

\smallskip
\noindent\emph{Step 2: strong stability recovers the endpoint and the
Busemann height.}
Conversely, suppose
$w=\widetilde\Phi(y,\eta)\in W^{ss}_{SX}(v)$.  Since $\pi_X$ is
$1$-Lipschitz by \eqref{eq:footpoint-one-lipschitz},
\[
  d_X\bigl(c_{x,\xi}(t),c_{y,\eta}(t)\bigr)
  \longrightarrow0.
\]
The distance is therefore bounded on $[0,\infty)$: it tends to zero for
large $t$ and is continuous on every compact time interval.  Hence
\eqref{eq:ws-upstairs} gives $\eta=\xi$.  The Busemann difference is
constant along the two rays by \eqref{eq:busemann-height-flow}.  Since
$B_\xi$ is $1$-Lipschitz, for every $t\geq0$,
\begin{align*}
  |B_\xi(x)-B_\xi(y)|
  &=\left|
    B_\xi\bigl(c_{x,\xi}(t)\bigr)
    -B_\xi\bigl(c_{y,\xi}(t)\bigr)
  \right|\\
  &\leq d_X\bigl(c_{x,\xi}(t),c_{y,\xi}(t)\bigr).
\end{align*}
Letting $t\to\infty$ gives $B_\xi(x)=B_\xi(y)$.  Therefore
$(y,\xi)\in\tWss(x,\xi)$, which proves
\eqref{eq:ss-identification-upstairs}.

\medskip
\noindent\emph{Step 3: passage to the quotient.}
Let $q:SX\to SM$ be the covering map and
$z=[x,\xi]$.  Given $z'\in\Wss(z)$, choose
$(y,\xi)\in\tWss(x,\xi)$ with $z'=[y,\xi]$, and set
$w=\widetilde\Phi(y,\xi)$.  By
\eqref{eq:ss-identification-upstairs},
$d_{SX}(\varphi^tv,\varphi^tw)\to0$.  Since $q$ is distance
nonincreasing, it follows that
\[
  d_{SM}\bigl(\varphi^t\overline\Phi(z),
  \varphi^t\overline\Phi(z')\bigr)\longrightarrow0.
\]
Hence
$\overline\Phi(\Wss(z))\subseteq
W^{ss}_{SM}(\overline\Phi(z))$.

Conversely, let
$\bar w\in W^{ss}_{SM}(\overline\Phi(z))$, and choose a lift $w\in SX$
with $q(w)=\bar w$.  The quotient distance is an infimum over deck
transformations.

Because $SM$ is compact and $q$ is a Riemannian covering, there is
$\rho>0$ such that every ball of radius $2\rho$ in $SM$ is geodesically
convex and evenly covered by $q$.  Choose $T>0$ such that
\[
  d_{SM}\bigl(\varphi^tq(v),\varphi^t\bar w\bigr)<\rho
  \qquad\text{for all }t\geq T.
\]
For each $t\geq T$, let $\sigma_t$ be the unique minimizing geodesic in
$SM$ from $\varphi^tq(v)$ to $\varphi^t\bar w$.  Lift $\sigma_t$ from
the initial point $\varphi^tv$, and denote the terminal point of the
lift by $\widehat w(t)$.  Since $q$ is an isometry on the relevant
sheet,
\[
  q(\widehat w(t))=\varphi^t\bar w,
  \qquad
  d_{SX}\bigl(\varphi^tv,\widehat w(t)\bigr)
  =d_{SM}\bigl(\varphi^tq(v),\varphi^t\bar w\bigr).
\]
The unique short geodesic depends continuously on its endpoints, so
$t\mapsto\widehat w(t)$ is continuous.  It is therefore a lift of the
path $t\mapsto\varphi^t\bar w$ on $[T,\infty)$.  At $t=T$, there is a
deck transformation $\gamma\in\Gamma$ such that
\[
  \widehat w(T)=d\gamma(\varphi^Tw).
\]
Both $t\mapsto\widehat w(t)$ and
$t\mapsto d\gamma(\varphi^tw)$ lift the path
$t\mapsto\varphi^t\bar w$ and agree at $T$.  Uniqueness of path lifting
therefore gives
\[
  \widehat w(t)=d\gamma(\varphi^tw)
  \qquad(t\geq T).
\]
Since deck transformations commute with the geodesic flow,
\[
  d_{SX}\bigl(\varphi^tv,
  \varphi^t d\gamma(w)\bigr)\longrightarrow0
\]
and $d\gamma(w)\in W^{ss}_{SX}(v)$.  By
\eqref{eq:ss-identification-upstairs}, there is a point
$(y,\xi)\in\tWss(x,\xi)$ such that
$d\gamma(w)=\widetilde\Phi(y,\xi)$.  Therefore
\[
  \bar w=q(w)=q\bigl(d\gamma(w)\bigr)
  =\overline\Phi([y,\xi])
  \in\overline\Phi\bigl(\Wss(z)\bigr).
\]
Since $\bar w$ was arbitrary, the reverse inclusion follows.
\end{proof}

\subsection{Strong stable foliation boxes and leafwise divergence}

The sets $\Wss(z)$ are the strong stable leaves in the boundary
coordinates on $Z$.  Foliation-box coordinates define leafwise divergence.
The Gauss--Green formula and \eqref{eq:normal-alignment} imply its vanishing
against the limiting measure.

Let $p:X\times\partial_\infty X\to Z$ be the quotient map.  Equip each
lifted weak stable leaf $X\times\{\xi\}$ with the metric $g_X$.  Since
deck transformations are isometries, the restriction of $p$ to each
weak stable leaf is a local isometry.  By
\eqref{eq:tangent-horosphere}, for $z=[x,\xi]$ the tangent space of the
strong stable leaf through $z$ is
\begin{equation}\label{eq:tangent-ss}
  T_z\Wss
  =dp_{(x,\xi)}\bigl(\ker dB_\xi|_x\bigr)
  =dp_{(x,\xi)}
   \bigl((\nabla B_\xi(x))^\perp\bigr).
\end{equation}
Here $dp_{(x,\xi)}$ denotes the differential in the $X$-direction.
The gradient equivariance derived from
\eqref{eq:stable-equivariance} shows that this subspace is independent of
the representative $(x,\xi)$.  Denote the resulting leafwise tangent
bundle by $T\Wss$.

Equation~\eqref{eq:tangent-ss} identifies the tangent spaces but does not
give foliation charts with controlled transverse dependence.

\begin{lemma}
\label{lem:strong-stable-foliation-regularity}
The family $\{\Wss(z):z\in Z\}$ forms the strong stable foliation
$\Wss$ of $Z$.  Its leaves have dimension $m-1$ and are of class
$C^2$.  In strong stable foliation boxes, the leafwise coordinate
changes and their derivatives through order two depend continuously on
the transverse parameter.
\end{lemma}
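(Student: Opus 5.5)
I will build explicit foliation boxes from the horospherical structure in the boundary coordinates, using the joint continuity in Proposition~\ref{prop:busemann-comparison}(iii), rather than appeal to abstract stable manifold theory. The leaves $\Wss(z)$ are already identified in Proposition~\ref{prop:stable-leaf-identification}. By Lemma~\ref{lem:horospherical-geometry} each lift $\tWss(x,\xi)=H_{\xi,B_\xi(x)}\times\{\xi\}$ is a connected embedded $C^2$ hypersurface of dimension $m-1$. Since $p$ restricted to a weak stable leaf $X\times\{\xi\}$ is a local isometry, each $\Wss(z)$ is an immersed $C^2$ manifold of dimension $m-1$. Distinct leaves are disjoint, because $\tWss$ is defined by an equivalence relation (same $\xi$, same Busemann height) that is $\Gamma$-equivariant. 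What remains is to produce local product charts with the stated transverse regularity.

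To build a box around $z_0=[x_0,\xi_0]$, fix a small $\varepsilon>0$ and a neighborhood $U\subset\partial_\infty X$ of $\xi_0$. Let $D_0\subset H_{\xi_0,B_{\xi_0}(x_0)}$ be a small leafwise disk through $x_0$, parametrized by $\exp_{x_0}$ of the tangent space $E_0=(\nabla B_{\xi_0}(x_0))^\perp$ followed by projection along the gradient flow onto the horosphere. For $(\xi,s,u)\in U\times(-\varepsilon,\varepsilon)\times E_0$ small, I define $\Theta(\xi,s,u)$ as the unique point $y$ on the geodesic $t\mapsto\exp_{x_0}(u+t\,\nabla B_\xi(x_0))$ with $\beta_\xi(y,x_0)=s$. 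Uniqueness follows from $|\nabla B_\xi|=1$ and the transversality of $\nabla B_\xi(x_0)$ to $E_0$, which holds for $\xi$ near $\xi_0$. Existence and $C^2$ dependence on $u$ come from the implicit function theorem applied to $F(\xi,s,u,t)=B_\xi(\exp_{x_0}(u+t\,\nabla B_\xi(x_0)))-B_\xi(x_0)-s$, since $\partial_tF\neq0$ near $(\xi_0,0,0,0)$. For fixed $(\xi,s)$, the map $u\mapsto\Theta(\xi,s,u)$ is a $C^2$ chart of a plaque of the horosphere $H_{\xi,B_\xi(x_0)+s}$. Combining this with the transversal $(\xi,s)$ gives a homeomorphism from $U\times(-\varepsilon,\varepsilon)\times(\text{ball in }E_0)$ onto a neighborhood in $X\times\partial_\infty X$. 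Injectivity of $p$ on a small neighborhood then lets the box descend to $Z$.

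The main obstacle is the transverse continuity of $u$-derivatives up to order two, since $\xi$ moves only in a topological space. First-order derivatives of $\Theta$ in $u$ are given by the implicit function formula. They involve $\nabla B_\xi$ and the derivative of $\exp_{x_0}$, both jointly continuous in $(\xi,u,t)$ by Proposition~\ref{prop:busemann-comparison}(iii). Second derivatives bring in $\nabla^2 B_\xi$ at the relevant point, together with smooth quantities from $\exp$. The Hessian is jointly continuous in $(x,\xi)$, and indeed a uniform limit of the continuous sections $H^T$ with error at most $1/T$. Differentiating the implicit equation twice therefore expresses $D_u^2\Theta$ through continuous functions of $(\xi,s,u)$. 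The same reasoning applies to coordinate changes between two such boxes. These are compositions $\Theta'^{-1}\circ\Theta$, and in the leaf direction the inverse is obtained again by the implicit function theorem (via projection to $E_0'$ along $\nabla B_\xi$). Their $u$-derivatives through order two are therefore continuous in $(\xi,s)$. The transverse coordinate change $(\xi,s)\mapsto(\xi',s')$ is continuous: $\xi$ is preserved, and $s$ shifts by a continuous Busemann cocycle value. Only $C^2$ in the leaf direction is claimed, which matches exactly the $C^2$ regularity available for $B_\xi$, so no higher derivatives are needed.
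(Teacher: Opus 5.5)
Your proposal is correct and follows essentially the paper's argument: you solve the level equation for $B_\xi$ by the implicit function theorem with $\xi$ as a continuous parameter, get transverse continuity of the leafwise derivatives through order two from the implicit-differentiation formulas and the joint continuity of $\nabla B_\xi$ and $\nabla^2B_\xi$ in Proposition~\ref{prop:busemann-comparison}(iii), and descend to $Z$ through a lifted neighborhood on which $p$ is injective. The differences are cosmetic: the paper solves for $s$ in fixed coordinates $x(y,s)$ with absolute height $c$, while you use relative height $\beta_\xi(\cdot,x_0)$ and the $\xi$-dependent curves $t\mapsto\exp_{x_0}(u+t\nabla B_\xi(x_0))$, which are not geodesics for $u\neq0$ but need not be. Note also that when two overlapping boxes come from lifts related by a deck transformation $\gamma$, the transverse change sends $\xi$ to $\gamma\xi$ rather than preserving it and shifts the height by a Busemann cocycle term; this map is still continuous.
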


\begin{proof}
Fix $z_0=[x_0,\xi_0]\in Z$ and set $c_0=B_{\xi_0}(x_0)$.  Choose smooth
coordinates $x=x(y,s)$ near $x_0$, where
$y=(y^1,\ldots,y^{m-1})$, such that the coordinate vector
$\partial_s$ at $x_0$ is $\nabla B_{\xi_0}(x_0)$.  Then
$\partial_sB_{\xi_0}(x_0)=1$.  After shrinking the coordinate neighborhood
and the neighborhood of $\xi_0$,
Proposition~\ref{prop:busemann-comparison}\textup{(iii)} gives
\[
  \partial_sB_\xi(x(y,s))>0.
\]
The level equation can therefore be solved uniquely for $s$.  The implicit
function theorem with parameters gives a connected open set
$P\subset\R^{m-1}$, a neighborhood $T$ of $(\xi_0,c_0)$ in
$\partial_\infty X\times\R$, and a function $\rho$ such that
\[
  B_\xi\bigl(x(y,\rho(y;\xi,c))\bigr)=c,
  \qquad (y,(\xi,c))\in P\times T.
\]
For fixed $(\xi,c)$, the function $y\mapsto\rho(y;\xi,c)$ is $C^2$;
the function $\rho$ and its first two $y$-derivatives are jointly
continuous in $(y,\xi,c)$.  To see the dependence of these derivatives,
write
\[
  b(y,s,\xi):=B_\xi(x(y,s)).
\]
Differentiating $b(y,\rho(y;\xi,c),\xi)=c$ in a plaque direction gives
\[
  \partial_{y^a}\rho
  =-\frac{\partial_{y^a}b}{\partial_sb}.
\]
A second differentiation gives
\[
  \partial^2_{y^a y^b}\rho
  =\frac{
    b_{y^a y^b}+b_{y^a s}\partial_{y^b}\rho
    -b_{y^b s}\partial_{y^a}\rho
    -b_{ss}\partial_{y^a}\rho\,\partial_{y^b}\rho
  }{b_s},
\]
where the derivatives of $b$ are evaluated at
$(y,\rho(y;\xi,c),\xi)$.  The joint continuity established in Section~2
and Proposition~\ref{prop:busemann-comparison}\textup{(iii)} therefore
gives the stated transverse continuity through order two.

Because the action of $\Gamma$ on $X$ is free and properly discontinuous,
the coordinate neighborhood may also be chosen so that its translates are
pairwise disjoint.  The quotient map is then injective on its product with
$\partial_\infty X$.  In this lifted neighborhood, set
\[
  \widetilde U
  =\bigl\{(x(y,s),\xi):y\in P,
  \ (\xi,B_\xi(x(y,s)))\in T\bigr\}.
\]
This set is open.  Writing $\tau=(\xi,c)$, define
\[
  \Theta:P\times T\longrightarrow Z,
  \qquad
  \Theta(y,\tau)
  =\bigl[x\bigl(y,\rho(y;\xi,c)\bigr),\xi\bigr].
\]
In the chosen lift, the inverse map is
\[
  \bigl[x(y,s),\xi\bigr]
  \longmapsto
  \bigl(y,(\xi,B_\xi(x(y,s)))\bigr).
\]
The uniqueness of $\rho$ shows that $\Theta(P\times T)$ is the
image of $\widetilde U$ in $Z$.  Since the quotient map is open,
$U:=\Theta(P\times T)$ is open; the displayed inverse is continuous, so
$\Theta:P\times T\to U$ is a homeomorphism.  For each
$\tau=(\xi,c)$, the image of
$P\times\{\tau\}$ is a connected open subset of the strong stable set
with endpoint $\xi$ and height $c$.  This set is the \emph{plaque} with
transverse parameter $\tau$.

Such boxes cover $Z$.  On a connected component of the overlap of two
boxes, the coordinate change has the form
\[
  (y,\tau)\longmapsto\bigl(F(y,\tau),G(\tau)\bigr).
\]
The strong stable set through a point is intrinsic, so a coordinate
change sends each connected plaque component into a plaque and its new
transverse label is independent of $y$.  Thus the transverse coordinate is
$G(\tau)$, with $G$ continuous.  The graph construction above shows that
$F$ is $C^2$ in $y$ and that its first two $y$-derivatives are continuous
in $(y,\tau)$.  These charts define a foliation with the stated leafwise
and transverse regularity.
\end{proof}

In a foliation box $P\times T$, write the induced plaque metric as
\[
  g_\tau
  =\sum_{a,b=1}^{m-1}g_{ab}(y,\tau)\,dy^a\,dy^b.
\]
The coefficients $g_{ab}$ are $C^1$ in $y$, and both $g_{ab}$ and
$\partial_{y^c}g_{ab}$ depend continuously on $(y,\tau)$.  Thus the
plaque-volume form is
\begin{equation}\label{eq:plaque-volume}
  dV_\tau(y)=Q(y,\tau)\,dy,
  \qquad
  Q(y,\tau)=\sqrt{\det(g_{ab}(y,\tau))}>0.
\end{equation}
The function $Q$ is positive and $C^1$ along plaques, and both $Q$ and
its first $y$-derivatives are continuous on $P\times T$.

\begin{definition}
\label{def:strong-stable-divergence}
Let $Y$ be a continuous section of $T\Wss$ that is $C^1$ along every
strong stable leaf.  For $z\in Z$, let $L_z=\Wss(z)$ and define
\[
  (\nabla^{ss}Y)_z(V)
  :=\nabla^{L_z}_V(Y|_{L_z}),
  \qquad V\in T_zL_z,
\]
where $\nabla^{L_z}$ is the Levi--Civita connection of the induced metric
on $L_z$.  Assume that $\nabla^{ss}Y$ is continuous on $Z$.  The
\emph{strong stable divergence} of $Y$ is
\[
  \divss Y(z):=\operatorname{tr}_{T_zL_z}(\nabla^{ss}Y)_z.
\]
Equivalently, in a strong stable foliation box, if
$Y=\sum_{a=1}^{m-1}Y^a\partial_{y^a}$, then
\[
  \divss Y=\frac1Q\sum_{a=1}^{m-1}\partial_{y^a}(QY^a),
\]
where $Q$ is the plaque-volume density in \eqref{eq:plaque-volume}.
\end{definition}

The trace is independent of the foliation box, and the coordinate formula
shows that $\divss Y$ is continuous on $Z$.

The pullback of $Y$ to a lifted weak stable leaf is tangent to the
horospheres but need not be $C^1$ in the transverse direction.

For $\xi\in\partial_\infty X$, write
\[
  \mathcal H_\xi:=\{H_{\xi,t}:t\in\R\},
  \qquad
  H_{\xi,t}:=B_\xi^{-1}(t),
\]
and let $H_\xi(x):=H_{\xi,B_\xi(x)}$ be the horosphere through $x$.
Suppose that $Y$ is a continuous vector field with
$Y(x)\in T_xH_\xi(x)$ for every $x\in X$ and is $C^1$ along every
horosphere.  For $V\in T_xH_\xi(x)$, define
\[
  (\nabla^{\mathcal H_\xi}Y)_x(V)
  :=\nabla^{H_\xi(x)}_V(Y|_{H_\xi(x)}),
\]
where $\nabla^{H_\xi(x)}$ is the Levi--Civita connection of the induced
metric on $H_\xi(x)$.  Its trace is the \emph{horospherical divergence}
\[
  \diver_{\mathcal H_\xi}Y(x)
  :=\operatorname{tr}_{T_xH_\xi(x)}
    (\nabla^{\mathcal H_\xi}Y)_x.
\]
\begin{lemma}
\label{lem:horosphere-div}
Fix $\xi\in\partial_\infty X$.  Let $Y$ be a continuous vector field
tangent to $\mathcal H_\xi$ and $C^1$ along every horosphere.  If
$\nabla^{\mathcal H_\xi}Y$ is continuous on $X$, then
$\diver_{\mathcal H_\xi}Y$ is continuous and every smooth domain
$\Omega\Subset X$ with outward unit normal $\nu$ satisfies
\begin{equation}\label{eq:horospherical-gauss-green}
  \int_\Omega \diver_{\mathcal H_\xi}Y\,dV
  =\int_{\partial\Omega}\langle Y,\nu\rangle\,dA.
\end{equation}
\end{lemma}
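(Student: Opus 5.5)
The plan is to integrate the leafwise divergence horosphere by horosphere using the coarea formula, and then to identify the result with the flux through $\partial\Omega$. The obstacle is that $Y$ is not assumed $C^1$ transversally, so the ambient divergence theorem cannot be applied to $Y$ directly.

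\emph{Continuity.} I will first prove continuity of $\diver_{\mathcal H_\xi}Y$, which is immediate: it is the trace of the continuous endomorphism field $\nabla^{\mathcal H_\xi}Y$ over the subspace $(\nabla B_\xi)^\perp$. That subspace varies continuously, by Proposition~\ref{prop:busemann-comparison}(iii).

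\emph{Splitting $\Omega$ into slices.} Since $|\nabla B_\xi|=1$, the coarea formula gives
\[
  \int_\Omega\diver_{\mathcal H_\xi}Y\,dV
  =\int_\R\Bigl(\int_{\Omega\cap H_{\xi,t}}\diver_{H_{\xi,t}}Y\,dA_t\Bigr)dt .
\]
By Sard's theorem applied to the $C^2$ function $B_\xi|_{\partial\Omega}$, almost every $t$ is a regular value. For such $t$, the slice $\Omega_t:=\Omega\cap H_{\xi,t}$ is a relatively compact domain in the $C^2$ hypersurface $H_{\xi,t}$, and its boundary $\partial\Omega\cap H_{\xi,t}$ is $C^2$. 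Its outward conormal is
\[
  n_t=\frac{\nu-\langle\nu,N\rangle N}{|\nu-\langle\nu,N\rangle N|},
  \qquad N=\nabla B_\xi .
\]
Because $Y$ is $C^1$ on each horosphere, the intrinsic divergence theorem on $H_{\xi,t}$ (valid for a $C^2$ metric and a $C^1$ field) gives
\[
  \int_{\Omega_t}\diver_{H_{\xi,t}}Y\,dA_t=\int_{\partial\Omega_t}\langle Y,n_t\rangle\,ds_t .
\]

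\emph{Reassembling the boundary term.} Next I will apply the coarea formula on $\partial\Omega$ to the function $B_\xi|_{\partial\Omega}$. Its tangential gradient has norm $|N-\langle N,\nu\rangle\nu|=\sqrt{1-\langle N,\nu\rangle^2}$. Since $Y\perp N$,
\[
  \langle Y,n_t\rangle=\frac{\langle Y,\nu\rangle}{\sqrt{1-\langle\nu,N\rangle^2}} .
\]
The two square-root factors therefore cancel, and $\int_\R\int_{\partial\Omega_t}\langle Y,n_t\rangle\,ds_t\,dt$ becomes $\int_{\partial\Omega\setminus\{\nu=\pm N\}}\langle Y,\nu\rangle\,dA$. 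On the critical set $\{\nu=\pm N\}$ we have $\langle Y,\nu\rangle=\pm\langle Y,N\rangle=0$, so that set contributes nothing and we recover \eqref{eq:horospherical-gauss-green}.

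\emph{Main obstacle.} I expect the hardest step to be justifying the slicewise divergence theorem and Fubini without uniformity near the critical tangency set. The points to check are measurability of $t\mapsto\int_{\partial\Omega_t}$ and integrability. Integrability holds because the integrands are bounded, since $Y$ and $\nabla^{\mathcal H_\xi}Y$ are continuous on the compact set $\overline\Omega$.

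\emph{Fallback.} A cleaner alternative is to use the $C^1$ diffeomorphism $F_c$ of Lemma~\ref{lem:horospherical-geometry}, which provides coordinates $(y,t)$ with $dV=Q\,dA_t\,dt$ and $Q$ continuous. Alternatively, one can mollify $Y$ along the leaves while preserving tangency, apply the ambient divergence theorem together with $\diver Y=\diver_{\mathcal H_\xi}Y+\langle\nabla_NY,N\rangle$, and pass to the limit. I would try the coarea route first.
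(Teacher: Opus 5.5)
\paragraph{The gap: the Sard step.} Your slicing route differs from the paper's, and most of it checks out. The coarea decomposition over horospheres is correct, as is the formula for the conormal $n_t$. The two factors $\sqrt{1-\langle\nu,N\rangle^2}$ do cancel, and $|\langle Y,n_t\rangle|\leq|Y|$ because $Y\perp N$. Finally, $\langle Y,\nu\rangle$ vanishes on the tangency set $\Sigma:=\{p\in\partial\Omega:\nu(p)=\pm N(p)\}$.

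The gap is the sentence invoking Sard. For a real-valued function on the $(m-1)$-dimensional manifold $\partial\Omega$, the Morse--Sard theorem requires class $C^{m-1}$, and Whitney-type examples show that $C^{m-2}$ does not suffice in general. The paper only provides $B_\xi\in C^2$ (Proposition~\ref{prop:busemann-comparison}). So for $m\geq4$ nothing in your argument excludes $\partial\Omega$ being tangent to $H_{\xi,t}$ for a set of heights $t$ of positive measure. For such $t$ the slice $\Omega\cap H_{\xi,t}$ need not have a boundary on which the intrinsic divergence theorem applies. This, rather than measurability, is the real obstacle in your route; for $m=2,3$ your argument is complete as written. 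Your first fallback through $F_c$ does not remove the obstacle, because it still integrates by parts on the same slices.

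\paragraph{Repairing the step.} Write $\mathscr H^{m-2}$ for $(m-2)$-dimensional Hausdorff measure. Your own coarea formula on $\partial\Omega$ gives $\int_\R\mathscr H^{m-2}(\Sigma\cap H_{\xi,t})\,dt=\int_\Sigma\sqrt{1-\langle\nu,N\rangle^2}\,dA=0$. Hence for almost every $t$ the compact set $\Sigma_t:=\Sigma\cap H_{\xi,t}$ is $\mathscr H^{m-2}$-null and $\mathscr H^{m-2}(\partial\Omega\cap H_{\xi,t})<\infty$. Away from $\Sigma_t$ the slice boundary is a transversal intersection, hence $C^2$ with the slice locally on one side. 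You may therefore apply the slice divergence theorem to $\eta_\varepsilon Y$, where $\eta_\varepsilon$ vanishes near $\Sigma_t$, tends to $1$ off $\Sigma_t$, and is built from covers of $\Sigma_t$ by balls in $H_{\xi,t}$ with $\sum_i r_i^{m-2}<\varepsilon$. Then $\int_{H_{\xi,t}}|\nabla\eta_\varepsilon|\,dA_t\lesssim\varepsilon$, and you let $\varepsilon\to0$. Alternatively, you could import extra regularity of $B_\xi$ from stable-manifold theory in the cocompact case, but the paper does not establish it.

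\paragraph{The paper's route.} The paper avoids slicing altogether, along the lines of your second fallback. Lemma~\ref{lem:horosphere-tangent-approximation} mollifies the coefficients of $Y=\sum_aY^a\partial_{y^a}$ in $C^2$ charts with $t=B_\xi$. It convolves in all variables, since $Y$ is only continuous transversally, but keeps only $\partial_{y^a}$-components. Patching with a partition of unity gives $C^1$ fields $Y_k$ with $dB_\xi(Y_k)=0$, $Y_k\to Y$, and $\diver_XY_k\to\diver_{\mathcal H_\xi}Y$ uniformly on $\overline\Omega$. For such tangent fields the ambient and leafwise divergences agree, since $\langle\nabla_NY_k,N\rangle=-\langle Y_k,\nabla_NN\rangle=0$, using $\nabla_NN=\tfrac12\nabla|N|^2=0$. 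The classical divergence theorem on $\Omega$ and a uniform limit then give \eqref{eq:horospherical-gauss-green}. This route never needs to know how $\partial\Omega$ meets the horospheres, which is exactly what your primary argument has to control.
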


\begin{proof}
The continuity of $\diver_{\mathcal H_\xi}Y$ follows by taking the trace
of $\nabla^{\mathcal H_\xi}Y$.

Apply Lemma~\ref{lem:horosphere-tangent-approximation} with
$B=B_\xi$.  For a fixed smooth domain $\Omega\Subset X$, it gives
$C^1$ vector fields $Y_k$ such that, uniformly on $\overline\Omega$,
\[
  Y_k\longrightarrow Y,
  \qquad
  \diver_XY_k\longrightarrow\diver_{\mathcal H_\xi}Y.
\]
The classical divergence theorem applied to $Y_k$, followed by passage
to the limit, gives
\begin{align*}
  \int_\Omega\diver_{\mathcal H_\xi}Y\,dV
  &=\lim_{k\to\infty}\int_\Omega\diver_XY_k\,dV\\
  &=\lim_{k\to\infty}\int_{\partial\Omega}
    \langle Y_k,\nu\rangle\,dA\\
  &=\int_{\partial\Omega}\langle Y,\nu\rangle\,dA.
\end{align*}
\end{proof}

For the lift of a strong stable vector field,
Lemma~\ref{lem:horosphere-div} gives the Gauss--Green identity, while
Proposition~\ref{prop:yau-defects}(ii) shows that the normalized boundary
flux tends to zero.

\begin{proposition}
\label{prop:ss-div}
Let $(M^m,g)$, $(X,g_X)$, $\Gamma$, and $Z$ be as in
Section~\ref{sec:suspension-localization}, and suppose that
$\hiso(X)=m-1$.  Fix $\xi_0\in\partial_\infty X$, and let
$\{\Omega_j\}$ satisfy \eqref{eq:isoperimetric-sequence}.  Define
$\mu_j$ by \eqref{eq:mu-j}.  Then every weak limit $\mu$ of
$\{\mu_j\}$ satisfies
\[
  \int_Z\divss Y\,d\mu=0
\]
for every vector field $Y$ satisfying the hypotheses of
Definition~\ref{def:strong-stable-divergence}.
\end{proposition}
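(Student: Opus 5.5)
The plan is to lift $Y$ to the weak stable leaf $X\times\{\xi_0\}$, apply the horospherical Gauss--Green formula of Lemma~\ref{lem:horosphere-div} on each $\Omega_j$, and then use the boundary-normal convergence of Proposition~\ref{prop:yau-defects}(ii) to show that the normalized boundary flux tends to zero. Weak convergence $\mu_j\rightharpoonup\mu$ then passes the vanishing to $\mu$.

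\emph{Lifting.} Define $\widetilde Y:=(\iota_{\xi_0})^*Y$ on $X$. Concretely, $\widetilde Y(x)$ is the unique vector in $(\nabla B_{\xi_0}(x))^\perp$ with $dp_{(x,\xi_0)}\widetilde Y(x)=Y([x,\xi_0])$. The restriction of $p$ to $X\times\{\xi_0\}$ is a local isometry, and by Proposition~\ref{prop:stable-leaf-identification} and \eqref{eq:tangent-ss} it carries each horosphere $H_{\xi_0,c}$ locally isometrically onto a strong stable leaf. Hence $\widetilde Y$ has the following properties:
\begin{itemize}
\item it is continuous and tangent to $\mathcal H_{\xi_0}$;
\item it is $C^1$ along each horosphere;
\item its horospherical covariant derivative satisfies $\nabla^{\mathcal H_{\xi_0}}\widetilde Y=\iota_{\xi_0}^*(\nabla^{ss}Y)$, since Levi--Civita connections of induced metrics are preserved by local isometries.
\end{itemize}
In particular this covariant derivative is continuous, and
\[
  \diver_{\mathcal H_{\xi_0}}\widetilde Y=(\divss Y)\circ\iota_{\xi_0}.
\]
I expect the only subtle point in this step to be checking that the leafwise $C^1$ structure on $Z$ (from Lemma~\ref{lem:strong-stable-foliation-regularity}) matches the $C^2$ structure of the horospheres. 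That holds because the plaques are defined as images of pieces of horospheres.

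\emph{Gauss--Green and flux estimate.} By the definition \eqref{eq:mu-j} of $\mu_j$ and Lemma~\ref{lem:horosphere-div},
\[
  \int_Z\divss Y\,d\mu_j
  =\frac1{v_j}\int_{\Omega_j}\diver_{\mathcal H_{\xi_0}}\widetilde Y\,dV
  =\frac1{v_j}\int_{\partial\Omega_j}\langle\widetilde Y,\nu_j\rangle\,dA.
\]
Since $\widetilde Y\perp N=\nabla B_{\xi_0}$, we have $\langle\widetilde Y,\nu_j\rangle=\langle\widetilde Y,\nu_j-N\rangle$. Moreover $|\widetilde Y|\le\|Y\|_\infty<\infty$, because $Y$ is a continuous section over the compact space $Z$. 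Cauchy--Schwarz then gives
\[
  \Bigl|\int_Z\divss Y\,d\mu_j\Bigr|
  \le\|Y\|_\infty\Bigl(\frac{\area(\partial\Omega_j)}{v_j}\Bigr)^{1/2}
  \Bigl(\frac1{v_j}\int_{\partial\Omega_j}|\nu_j-N|^2\,dA\Bigr)^{1/2}.
\]
The first factor in parentheses tends to $(m-1)^{1/2}$ by \eqref{eq:isoperimetric-sequence}. The second tends to $0$ by \eqref{eq:normal-alignment}.

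\emph{Limit.} The function $\divss Y$ is continuous on the compact space $Z$, as noted after Definition~\ref{def:strong-stable-divergence}. Weak convergence along the chosen subsequence therefore gives
\[
  \int_Z\divss Y\,d\mu=\lim_{j\to\infty}\int_Z\divss Y\,d\mu_j=0.
\]

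The main obstacle is already handled by Lemma~\ref{lem:horosphere-div}: the lifted field $\widetilde Y$ is not $C^1$ transversally, so the classical divergence theorem does not apply directly. What remains is to verify that $\widetilde Y$ meets that lemma's hypotheses, in particular that $\nabla^{\mathcal H_{\xi_0}}\widetilde Y$ is continuous on $X$ and not merely along each horosphere. I would check this by pulling back the joint continuity of $\nabla^{ss}Y$ on $Z$ through the continuous map $\iota_{\xi_0}$ in the foliation-box coordinates.
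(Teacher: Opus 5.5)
Your proposal is correct and matches the paper's proof: both lift $Y$ to a horosphere-tangent field on $X$ via the leafwise local isometry $\iota_{\xi_0}$, then apply Lemma~\ref{lem:horosphere-div} on $\Omega_j$. Both then rewrite the flux as $\langle Y_0,\nu_j-\nabla B_{\xi_0}\rangle$, bound it by Cauchy--Schwarz using \eqref{eq:isoperimetric-sequence} and \eqref{eq:normal-alignment}, and pass to the limit using continuity of $\divss Y$ on the compact space $Z$. The only small difference is that you cite Proposition~\ref{prop:stable-leaf-identification} for the identification of horospheres with leaves, which is not needed because $\Wss=p(\tWss)$ holds by definition.
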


\begin{proof}
Pass to a subsequence, still indexed by $j$, such that
$\mu_j\rightharpoonup\mu$, and fix $Y$ as in
Definition~\ref{def:strong-stable-divergence}.  The map
$\iota_{\xi_0}:X\to Z$, $\iota_{\xi_0}(x)=[x,\xi_0]$, is a
leafwise local isometry.  Define $Y_0$ by
\[
  (d\iota_{\xi_0})_xY_0(x)=Y(\iota_{\xi_0}(x)).
\]
Then
\[
  |Y_0(x)|=|Y(\iota_{\xi_0}(x))|\leq\|Y\|_\infty.
\]
Equation~\eqref{eq:tangent-ss} gives
\begin{equation}\label{eq:Y0-tangent-horospheres}
  \bigl\langle Y_0,\nabla B_{\xi_0}\bigr\rangle=0.
\end{equation}
Since $\iota_{\xi_0}$ restricts to a local isometry on each horosphere,
$Y_0$ is $C^1$ along the horospheres and
$\nabla^{\mathcal H_{\xi_0}}Y_0$ is continuous.  Thus
Lemma~\ref{lem:horosphere-div} applies, and
Definition~\ref{def:strong-stable-divergence} gives
\[
  (\divss Y)(\iota_{\xi_0}(x))
  =(\diver_{\mathcal H_{\xi_0}}Y_0)(x).
\]

Set $v_j=\vol(\Omega_j)$, and let $\nu_j$ be the outward unit normal to
$\partial\Omega_j$.  Equations~\eqref{eq:mu-j} and
\eqref{eq:horospherical-gauss-green} give
\begin{align*}
  \int_Z\divss Y\,d\mu_j
  &=\frac1{v_j}\int_{\Omega_j}
  \diver_{\mathcal H_{\xi_0}}Y_0\,dV\\
  &=\frac1{v_j}\int_{\partial\Omega_j}
  \langle Y_0,\nu_j\rangle\,dA.
\end{align*}
By \eqref{eq:Y0-tangent-horospheres},
\[
  \langle Y_0,\nu_j\rangle
  =\bigl\langle Y_0,\nu_j-\nabla B_{\xi_0}\bigr\rangle.
\]
Therefore the Cauchy--Schwarz inequality gives
\begin{align*}
  \left|\int_Z\divss Y\,d\mu_j\right|
  &\leq \frac{\|Y\|_\infty}{v_j}
  \area(\partial\Omega_j)^{1/2}
  \left(
  \int_{\partial\Omega_j}
  |\nu_j-\nabla B_{\xi_0}|^2\,dA
  \right)^{1/2}\\
  &=\|Y\|_\infty
  \left(
  \frac{\area(\partial\Omega_j)}{\vol(\Omega_j)}
  \right)^{1/2}
  \left(
  \frac1{\vol(\Omega_j)}
  \int_{\partial\Omega_j}
  |\nu_j-\nabla B_{\xi_0}|^2\,dA
  \right)^{1/2}.
\end{align*}
The first factor is bounded by \eqref{eq:isoperimetric-sequence}, and the
second tends to zero by Proposition~\ref{prop:yau-defects}(ii).  Hence
\[
  \int_Z\divss Y\,d\mu_j\longrightarrow0.
\]
Since $\divss Y$ is continuous on the compact space $Z$, weak convergence
now gives
\[
  \int_Z\divss Y\,d\mu
  =\lim_{j\to\infty}\int_Z\divss Y\,d\mu_j
  =0.
\]
\end{proof}

\begin{corollary}
\label{cor:limiting-measure}
Under the hypotheses of Proposition~\ref{prop:ss-div}, the sequence
$\{\mu_j\}$ has weakly convergent subsequences.  Every weak limit $\mu$
is a Borel probability measure on $Z$ such that:
\begin{enumerate}[label=\textup{(\roman*)}]
\item For the function $D$ defined in \eqref{eq:D-on-Z},
\begin{equation}\label{eq:support-zero-set}
  \supp\mu\subseteq D^{-1}(0).
\end{equation}

\item For every vector field $Y$ satisfying the hypotheses of
Definition~\ref{def:strong-stable-divergence}, one has
\begin{equation}\label{eq:ss-divergence}
  \int_Z \divss Y\,d\mu=0.
\end{equation}
\end{enumerate}
\end{corollary}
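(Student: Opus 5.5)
The corollary collects results already proved, so the plan is to assemble them rather than introduce new arguments. I would first record the compactness statement. By \eqref{eq:Z-SM-homeo}, $Z$ is homeomorphic to the compact metrizable space $SM$. Each $\mu_j$ defined in \eqref{eq:mu-j} is a Borel probability measure, because $\mu_j(Z)=\vol(\Omega_j)/v_j=1$. Prokhorov's theorem (sequential compactness of probability measures on a compact metric space, \cite{Bogachev2007}*{Theorem~8.6.7}) then produces weakly convergent subsequences. Every weak limit $\mu$ is again a Borel probability measure, since the constant function $1$ is continuous on $Z$ and therefore $\mu(Z)=\lim\mu_j(Z)=1$.

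For (i), I would apply Proposition~\ref{prop:limit-support} directly. That proposition uses only that $\{\Omega_j\}$ is an isoperimetric minimizing sequence, through Proposition~\ref{prop:yau-defects}(i). It also uses the continuity and nonnegativity of $D$ on $Z$, which come from Proposition~\ref{prop:busemann-comparison}(ii)--(iii) and $\Gamma$-invariance. The one point to check is that the hypotheses of Proposition~\ref{prop:ss-div} supply what Section~\ref{sec:suspension-localization} needs. This holds because the universal cover of a closed manifold with $\secg_g\leq-1$ automatically has two-sided pinching $-A^2\leq\secg\leq-1$ for some $A\geq1$, by compactness of $M$. Hence Proposition~\ref{prop:yau-defects} applies to the given sequence.

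For (ii), the statement is exactly Proposition~\ref{prop:ss-div}. It applies to every weak limit along any subsequence and to every $Y$ as in Definition~\ref{def:strong-stable-divergence}. Continuity of $\divss Y$, noted after that definition, is what allows passage to the weak limit.

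The only real subtlety is bookkeeping. Both (i) and (ii) must hold for the same limit $\mu$, so I would stress that neither proposition depends on the choice of subsequence: each is proved for an arbitrary weak limit of $\{\mu_j\}$. Any single $\mu$ therefore satisfies both properties at once. No step here is a genuine obstacle.
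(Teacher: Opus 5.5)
Your proposal is correct and matches the paper's proof, which also just cites the weak sequential compactness established in Section~\ref{sec:suspension-localization}, then Proposition~\ref{prop:limit-support} for (i) and Proposition~\ref{prop:ss-div} for (ii). Your extra remarks are sound bookkeeping that the paper leaves implicit: the two-sided pinching follows from compactness of $M$, and each proposition holds for an arbitrary weak limit, so both properties hold for the same $\mu$.
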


\begin{proof}
Weak sequential compactness was established in
Section~\ref{sec:suspension-localization}.  Proposition~\ref{prop:limit-support}
gives \textup{(i)}, and Proposition~\ref{prop:ss-div} gives
\textup{(ii)}.
\end{proof}

\section{Plaque-volume disintegration and full support}
\label{sec:disintegration-full-support}
Let $\mu$ be a weak limit given by
Corollary~\ref{cor:limiting-measure}.  The identity
\eqref{eq:ss-divergence} implies that the conditional measures along plaques
be multiples of their Riemannian volume measures.  Hence $\supp\mu$ is
saturated by strong stable leaves.  Since every such leaf is dense,
$\supp\mu=Z$.

\subsection{Plaque-volume disintegration}

\begin{lemma}
\label{lem:constant-measure}
Let $P\subset\R^d$ be a connected open set, and let $\sigma$ be a locally
finite nonnegative Borel measure on $P$.  Suppose that
\begin{equation}\label{eq:zero-measure-derivative}
  \int_P\partial_a\psi\,d\sigma=0
\end{equation}
for every $\psi\in C_c^\infty(P)$ and every $a=1,\ldots,d$.  Then there
is a constant $c\geq0$ such that $\sigma=c\,dy$ on $P$.
\end{lemma}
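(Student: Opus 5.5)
The plan is to treat $\sigma$ as a distribution and show that its distributional gradient vanishes. Since $\sigma$ is locally finite, the functional $T(\psi)=\int_P\psi\,d\sigma$ is a distribution of order zero on $P$. Hypothesis \eqref{eq:zero-measure-derivative} says exactly that $\partial_aT=0$ for $a=1,\ldots,d$, because $\partial_aT(\psi)=-T(\partial_a\psi)$. The classical fact that a distribution with vanishing gradient on a connected open set is a constant then gives a constant $c$ with $T=c\,dy$. Testing against $\psi\geq0$ shows $c\geq0$, and equality of the two Radon measures on $C_c(P)$, obtained by density of $C_c^\infty$ in $C_c$ under uniform convergence with supports in a fixed compact set, gives $\sigma=c\,dy$ as Borel measures by the Riesz representation theorem.

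To keep the argument self-contained, I would prove the constancy step by mollification rather than quote distribution theory. Fix a standard mollifier $\eta_\varepsilon$ and, on $P_\varepsilon=\{y\in P:\operatorname{dist}(y,\partial P)>\varepsilon\}$, set
\[
  f_\varepsilon(y):=\int_P\eta_\varepsilon(y-w)\,d\sigma(w).
\]
This function is smooth. Differentiating under the integral gives $\partial_af_\varepsilon(y)=\int(\partial_a\eta_\varepsilon)(y-w)\,d\sigma(w)=-\int\partial_{w^a}[\eta_\varepsilon(y-w)]\,d\sigma(w)$. The last expression vanishes by \eqref{eq:zero-measure-derivative} applied to $\psi=\eta_\varepsilon(y-\cdot)\in C_c^\infty(P)$. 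So $f_\varepsilon$ is constant on each connected component of $P_\varepsilon$. Moreover, $f_\varepsilon\,dy\rightharpoonup\sigma$ as $\varepsilon\to0$ when tested against $C_c(P)$, by Fubini and uniform convergence $\eta_\varepsilon*\psi\to\psi$.

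The main obstacle is connectedness, since $P_\varepsilon$ need not be connected. I would handle it locally. On any ball $B$ with $2B\subset P$, for small $\varepsilon$ the function $f_\varepsilon$ is a constant $c_\varepsilon(B)$ on $B$. Testing against a fixed bump in $B$ shows $c_\varepsilon(B)\to c(B)$ and $\sigma|_B=c(B)\,dy$. Two overlapping balls give the same constant, since the overlap has positive Lebesgue measure. Because $P$ is connected, any two balls can be joined by a chain of overlapping balls along a path, so $c(B)$ is a single constant $c$. The balls cover $P$, so $\sigma=c\,dy$ on $P$.
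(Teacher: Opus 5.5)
Your proposal is correct and follows essentially the same route as the paper: you mollify $\sigma$, note that $f_\varepsilon$ has zero gradient and is therefore constant on a fixed ball once $\varepsilon$ is small, identify the limiting constant by testing against a unit-mass bump, and glue the local constants along chains of overlapping balls using the connectedness of $P$. Working on balls with $2B\subset P$ to avoid the possibly disconnected $P_\varepsilon$ plays the same role as the paper's concentric balls $P'\Subset P''\Subset P$.
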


\begin{proof}
Fix concentric open balls $P'\Subset P''\Subset P$.  For all sufficiently
small $\varepsilon>0$, the convolution
$f_\varepsilon=\sigma*\eta_\varepsilon$ is defined and smooth on $P''$.
Equation~\eqref{eq:zero-measure-derivative} says that every distributional
derivative of $\sigma$ vanishes.  Hence
\[
  \partial_af_\varepsilon
  =\sigma*(\partial_a\eta_\varepsilon)=0
  \qquad (a=1,\ldots,d)
\]
on $P''$, and $f_\varepsilon$ is constant there; denote this constant by
$c_\varepsilon$.

Choose $\theta\in C_c^\infty(P')$ with $\int_{P'}\theta\,dy=1$.  The
weak convergence $f_\varepsilon\,dy\rightharpoonup\sigma$ on $P'$ gives
\[
  c_\varepsilon
  =\int_{P'}\theta f_\varepsilon\,dy
  \longrightarrow\int_{P'}\theta\,d\sigma=:c_{P'}.
\]
For every $\psi\in C_c^\infty(P')$, we therefore have
\[
  \int_{P'}\psi\,d\sigma
  =\lim_{\varepsilon\to0}\int_{P'}\psi f_\varepsilon\,dy
  =c_{P'}\int_{P'}\psi\,dy.
\]
Thus $\sigma=c_{P'}\,dy$ on $P'$.  The constants agree on overlapping
balls.  Since every connected open subset of $\R^d$ is path connected, a
chain of overlapping balls joins any two such balls in $P$.  Therefore
$\sigma=c\,dy$ on $P$ for one constant $c\geq0$.
\end{proof}

\begin{proposition}
\label{prop:local-disintegration}
Let $\mu$ be a finite Radon measure on $Z$ such that
\begin{equation}\label{eq:section-five-divergence-hypothesis}
  \int_Z\divss Y\,d\mu=0
\end{equation}
for every $Y$ satisfying Definition~\ref{def:strong-stable-divergence}.
Let $U=P\times T$ be a strong stable foliation box.  Choose a
connected open set $P_0\Subset P$ and an open set $T_0\Subset T$, and
write $U_0=P_0\times T_0$.  Then there is a finite Radon measure $\nu$
on $T_0$ such that
\begin{equation}\label{eq:local-disintegration}
  \int_{U_0}f\,d\mu
  =\int_{T_0}\left(\int_{P_0}f(y,\tau)\,dV_\tau(y)\right)d\nu(\tau)
\end{equation}
for every $f\in C_c(U_0)$.
\end{proposition}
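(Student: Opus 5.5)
The plan is to work in the box coordinates $\Theta:P\times T\to U$ of Lemma~\ref{lem:strong-stable-divergence}'s predecessor, Lemma~\ref{lem:strong-stable-foliation-regularity}, and to disintegrate $\mu|_{U_0}$ along the projection $\mathrm{pr}_T:U_0\to T_0$. Since $U_0$ is a relatively compact open subset of a metrizable space homeomorphic to $P_0\times T_0$, the Rokhlin disintegration theorem gives a finite measure $\bar\nu=(\mathrm{pr}_T)_*(\mu|_{U_0})$ on $T_0$ and a $\bar\nu$-measurable family of probability measures $\sigma_\tau$ on $P_0$ with
\[
  \int_{U_0}f\,d\mu=\int_{T_0}\int_{P_0}f(y,\tau)\,d\sigma_\tau(y)\,d\bar\nu(\tau).
\]
The goal is then to show that for $\bar\nu$-a.e.\ $\tau$ the measure $\sigma_\tau$ equals $c(\tau)\,Q(y,\tau)\,dy$ with a constant $c(\tau)\geq0$. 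Once this is known, $c(\tau)=\bigl(\int_{P_0}Q(y,\tau)\,dy\bigr)^{-1}$ is a positive continuous function on $T_0$; it is bounded because $Q$ is continuous and positive on $\overline{P_0}\times\overline{T_0}$. Setting $d\nu=c\,d\bar\nu$ gives \eqref{eq:local-disintegration}.

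To identify $\sigma_\tau$, I will test \eqref{eq:section-five-divergence-hypothesis} against product fields. Fix $a\in\{1,\dots,m-1\}$, $\psi\in C_c^\infty(P_0)$ and $\chi\in C_c(T_0)$, and define on $U$
\[
  Y=\chi(\tau)\,\frac{\psi(y)}{Q(y,\tau)}\,\partial_{y^a},
\]
extended by zero outside the box. The field $Y$ is continuous and tangent to $T\Wss$. It is $C^1$ along leaves because $Q$ is $C^1$ in $y$. The leafwise covariant derivative $\nabla^{ss}Y$ involves only $Q$, $\partial_yQ$, $\partial_y\psi$, and the Christoffel symbols of $g_\tau$. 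These depend on $g_{ab}$ and $\partial_{y^c}g_{ab}$, which are jointly continuous by Lemma~\ref{lem:strong-stable-foliation-regularity}, so $\nabla^{ss}Y$ is continuous and $Y$ satisfies Definition~\ref{def:strong-stable-divergence}. The coordinate formula gives
\[
  \divss Y=\chi(\tau)\,Q^{-1}\partial_{y^a}\psi.
\]
Hence
\[
  0=\int_{T_0}\chi(\tau)\left(\int_{P_0}\partial_{y^a}\psi\;Q^{-1}\,d\sigma_\tau\right)d\bar\nu(\tau).
\]
Since $\chi$ is arbitrary, the inner integral vanishes for $\bar\nu$-a.e.\ $\tau$, with the exceptional set depending on $(a,\psi)$.

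The main obstacle is making this exceptional set independent of $\psi$. I will take a countable family $\{\psi_k\}\subset C_c^\infty(P_0)$ that is dense in the $C^1$ topology on each compact subset, which is possible because $C_c^1$ spaces on an exhaustion are separable. Discarding a countable union of null sets gives, for $\bar\nu$-a.e.\ $\tau$, that $\int\partial_a\psi\,d(Q^{-1}\sigma_\tau)=0$ for all $k$ and all $a$. By density this holds for every $\psi\in C_c^\infty(P_0)$, since $Q^{-1}\sigma_\tau$ is locally finite. Lemma~\ref{lem:constant-measure} then gives $Q^{-1}\sigma_\tau=c(\tau)\,dy$ on the connected set $P_0$, that is, $\sigma_\tau=c(\tau)\,dV_\tau$. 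Measurability of $c(\tau)$ follows from $c(\tau)=\int\theta\,Q^{-1}\,d\sigma_\tau$ for a fixed bump $\theta$ with $\int\theta\,dy=1$, and this is enough to define $\nu$.
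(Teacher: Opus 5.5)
Your proposal is correct and follows the paper's proof essentially step for step. You disintegrate $\mu|_{U_0}$ over the transverse projection, test the divergence identity against the fields $\chi(\tau)Q^{-1}\psi\,\partial_{y^a}$ to get vanishing distributional derivatives of $Q^{-1}\sigma_\tau$, remove the $\psi$-dependence of the null set with a countable $C^1$-dense family on a compact exhaustion, apply Lemma~\ref{lem:constant-measure}, and take $d\nu=\vol_\tau(P_0)^{-1}\,d\bar\nu$. The differences are only cosmetic: the paper first normalizes $\mu|_{U_0}$ to a probability measure and cites Bogachev's product disintegration theorem, and it states explicitly that the inner integrals are bounded measurable functions of $\tau$, which is what lets vanishing against every $\chi\in C_c(T_0)$ force them to vanish almost everywhere.
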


\begin{proof}
If $\mu(U_0)=0$, take $\nu=0$.  Assume $\mu(U_0)>0$.

Set
\[
  M:=\mu(U_0),
  \qquad
  \overline\mu:=M^{-1}(\mu|_{U_0}).
\]
Then $\overline\mu$ is a Radon probability measure on
$U_0=P_0\times T_0$.  Let $\pi_T:U_0\to T_0$ be the transverse
projection and set
\[
  \lambda:=(\pi_T)_*\overline\mu.
\]

The spaces $P_0$ and $T_0$ are locally compact and second countable, and
\[
  \mathcal B(P_0\times T_0)
  =\mathcal B(P_0)\otimes\mathcal B(T_0).
\]
The $\sigma$-algebra $\mathcal B(P_0)$ is countably generated, and the
pushforward of
$\overline\mu$ under the projection onto $P_0$ is Radon and hence has a
compact approximating class.
By the product disintegration theorem
(see \cite{Bogachev2007}*{Theorem~10.4.14}), there is a
$\lambda$-measurable family of nonnegative Borel measures
$\{\mu_\tau\}$ on $P_0$ such that, for every Borel set
$E\subset P_0\times T_0$,
\begin{equation}\label{eq:conditional-disintegration}
  \overline\mu(E)
  =\int_{T_0}\mu_\tau(E_\tau)\,d\lambda(\tau),
  \qquad
  E_\tau:=\{y\in P_0:(y,\tau)\in E\}.
\end{equation}
In particular,
$\tau\mapsto\mu_\tau(A)$ is $\lambda$-measurable for every Borel set
$A\subset P_0$.

Let $B\subset T_0$ be Borel.  Applying
\eqref{eq:conditional-disintegration} to $E=P_0\times B$ and using
$\lambda=(\pi_T)_*\overline\mu$ gives
\[
  \lambda(B)
  =\overline\mu(P_0\times B)
  =\int_B\mu_\tau(P_0)\,d\lambda(\tau).
\]
Therefore
\[
  \int_B\bigl(\mu_\tau(P_0)-1\bigr)\,d\lambda(\tau)=0
\]
for every Borel set $B\subset T_0$, and hence
\begin{equation}\label{eq:conditional-probability}
  \mu_\tau(P_0)=1
  \qquad\text{for $\lambda$-almost every $\tau$.}
\end{equation}
$\mu_\tau$ is therefore a probability measure on $P_0$ for
$\lambda$-almost every $\tau$.  Choose $y_*\in P_0$ and replace
$\mu_\tau$ by $\delta_{y_*}$ on the exceptional $\lambda$-null set.
This changes neither the measurability of the family nor
\eqref{eq:conditional-disintegration}.  After this modification, every
$\mu_\tau$ is a probability measure on $P_0$.

For the original measure, set
\[
  \widehat\mu:=(\pi_T)_*(\mu|_{U_0})=M\lambda.
\]
In particular, $\lambda$ and $\widehat\mu$ have the same null sets.
Multiplying the function form of
\eqref{eq:conditional-disintegration} by $M$ gives
\begin{equation}\label{eq:mu-disintegration}
  \int_{U_0}f\,d\mu
  =\int_{T_0}\left(\int_{P_0}f(y,\tau)\,d\mu_\tau(y)\right)
  d\widehat\mu(\tau)
\end{equation}
for every $f\in C_c(U_0)$.

After division by the plaque-volume density, each conditional measure has
zero distributional gradient.  Write
$dV_\tau=Q(y,\tau)\,dy$ as in \eqref{eq:plaque-volume}.  Fix
$\psi\in C_c^\infty(P_0)$, $1\leq a\leq m-1$, and
$\chi\in C_c(T_0)$, and define
\[
  Y_{\psi,a,\chi}(y,\tau)
  :=\chi(\tau)Q(y,\tau)^{-1}\psi(y)
  \frac{\partial}{\partial y^a}.
\]
Its support is contained in
$\supp\psi\times\supp\chi\Subset U_0$.  Since $Q^{-1}$ is $C^1$ along
the plaques and its first plaque derivatives depend continuously on
$(y,\tau)$, extension by zero gives a global field satisfying
Definition~\ref{def:strong-stable-divergence}.  Since
$QY_{\psi,a,\chi}^a=\chi\psi$, the coordinate divergence formula gives
\[
  \divss Y_{\psi,a,\chi}
  =Q(y,\tau)^{-1}\partial_a
    \bigl(QY_{\psi,a,\chi}^a\bigr)
  =\chi(\tau)Q(y,\tau)^{-1}\partial_a\psi(y).
\]
Set
\[
  h_{\psi,a}(\tau)
  :=\int_{P_0}Q(y,\tau)^{-1}\partial_a\psi(y)\,d\mu_\tau(y).
\]
The integrand is jointly Borel and uniformly bounded on
$\supp\psi\times\overline{T_0}$.  The measurability of the conditional
kernel implies that $h_{\psi,a}$ is measurable and
bounded.  Equations
\eqref{eq:section-five-divergence-hypothesis} and
\eqref{eq:mu-disintegration} give
\[
  0=\int_Z\divss Y_{\psi,a,\chi}\,d\mu
  =\int_{T_0}\chi(\tau)h_{\psi,a}(\tau)\,d\widehat\mu(\tau)
  \qquad\text{for every }\chi\in C_c(T_0).
\]
The signed Radon measure $h_{\psi,a}\,\widehat\mu$ vanishes.  Hence
\begin{equation}\label{eq:conditional-zero-derivative}
  \int_{P_0}Q(y,\tau)^{-1}\partial_a\psi(y)\,d\mu_\tau(y)=0
\end{equation}
for $\widehat\mu$-almost every $\tau$.  The exceptional set may still
depend on $\psi$ and $a$.

To make the exceptional set independent of the test function, choose a
compact exhaustion
$K_n\Subset\operatorname{int}K_{n+1}$ of $P_0$.  For each $n$, the space
\[
  \mathcal E_n
  :=\{\psi\in C_c^\infty(P_0):\supp\psi\subset K_n\}
\]
is separable in the $C^1$ norm: it embeds isometrically into the separable
space $C(K_n)^m$ by
\[
  \psi\longmapsto
  (\psi,\partial_1\psi,\ldots,\partial_{m-1}\psi).
\]
Choose a countable $C^1$-dense subset
$\mathcal D_n\subset\mathcal E_n$.  Intersect the full-measure sets on
which \eqref{eq:conditional-zero-derivative} holds for
$\psi\in\bigcup_n\mathcal D_n$ and $a=1,\ldots,m-1$.  Because the
intersection is countable, the resulting set $T_0'\subset T_0$ has full
$\widehat\mu$-measure.

Fix $\tau\in T_0'$ and $\psi\in C_c^\infty(P_0)$.  Choose $n$ so that
$\supp\psi\subset\operatorname{int}K_n$, and choose
$\psi_j\in\mathcal D_n$ with
$\psi_j\to\psi$ in $C^1$.  Since $\mu_\tau(P_0)=1$ and
$Q(\cdot,\tau)^{-1}$ is bounded on $K_n$,
\[
  \left|\int_{P_0}Q(y,\tau)^{-1}
  \partial_a(\psi_j-\psi)(y)\,d\mu_\tau(y)\right|
  \leq
  \sup_{K_n}Q(\cdot,\tau)^{-1}
  \|\partial_a(\psi_j-\psi)\|_\infty
  \longrightarrow0.
\]
Passing to the limit shows that \eqref{eq:conditional-zero-derivative}
holds simultaneously
for every $\tau\in T_0'$, every $\psi\in C_c^\infty(P_0)$, and every
$a=1,\ldots,m-1$.

Fix $\tau\in T_0'$ and define
\[
  d\sigma_\tau:=Q(\cdot,\tau)^{-1}\,d\mu_\tau.
\]
This is a finite nonnegative Borel measure on $P_0$, because
$Q(\cdot,\tau)^{-1}$ is bounded on
$\overline{P_0}$ and $\mu_\tau(P_0)=1$.
Equation~\eqref{eq:conditional-zero-derivative} says that every
distributional derivative of $\sigma_\tau$ vanishes.  Since $P_0$ is
connected, Lemma~\ref{lem:constant-measure} gives a constant
$c(\tau)\geq0$ such that
\[
  d\sigma_\tau=c(\tau)\,dy,
  \qquad
  d\mu_\tau=c(\tau)\,dV_\tau.
\]
Let
\[
  v(\tau):=\vol_\tau(P_0)=\int_{P_0}Q(y,\tau)\,dy.
\]
Because $\mu_\tau$ is a probability measure,
\[
  1=\mu_\tau(P_0)=c(\tau)v(\tau),
  \qquad
  c(\tau)=v(\tau)^{-1}.
\]
For $\widehat\mu$-almost every $\tau$,
\begin{equation}\label{eq:normalized-plaque-volume}
  d\mu_\tau=v(\tau)^{-1}\,dV_\tau.
\end{equation}

The function $v$ is continuous and bounded away from zero.  Continuity
and positivity of $Q$ on the compact set
$\overline{P_0}\times\overline{T_0}$ give constants
$0<q_-\leq q_+<\infty$ such that
\[
  q_-\leq Q(y,\tau)\leq q_+
  \qquad
  ((y,\tau)\in\overline{P_0}\times\overline{T_0}).
\]
Hence
\[
  q_-|P_0|\leq v(\tau)\leq q_+|P_0|,
\]
and the uniform continuity of $Q$ on the same compact set implies the
continuity of $v$.  Define
\[
  d\nu(\tau):=v(\tau)^{-1}\,d\widehat\mu(\tau).
\]
The function $v^{-1}$ is bounded and continuous, so $\nu$ is a finite
Radon measure.  For $f\in C_c(U_0)$,
equations~\eqref{eq:mu-disintegration} and
\eqref{eq:normalized-plaque-volume} give
\[
  \int_{U_0}f\,d\mu
  =\int_{T_0}v(\tau)^{-1}
    \left(\int_{P_0}f(y,\tau)\,dV_\tau(y)\right)d\widehat\mu(\tau)
  =\int_{T_0}\left(\int_{P_0}f(y,\tau)\,dV_\tau(y)\right)d\nu(\tau).
\]
This is \eqref{eq:local-disintegration}.
\end{proof}

Equation~\eqref{eq:local-disintegration} is an equality of Radon measures
on $U_0$.  Consequently, if $A\subset U_0$ is Borel and
$A_\tau:=\{y\in P_0:(y,\tau)\in A\}$, then
\begin{equation}\label{eq:borel-disintegration}
  \mu(A)=\int_{T_0}\vol_\tau(A_\tau)\,d\nu(\tau).
\end{equation}
In particular, for Borel sets $E\subset P_0$ and $V\subset T_0$,
\begin{equation}\label{eq:rectangle-disintegration}
  \mu(E\times V)=\int_V\vol_\tau(E)\,d\nu(\tau).
\end{equation}
Since $dV_\tau=Q(\cdot,\tau)\,dy$ with $Q$ positive and continuous, every
nonempty open set $E\Subset P_0$ and every $V\Subset T_0$ satisfy
\[
  \inf_{\tau\in V}\vol_\tau(E)>0.
\]

\subsection{Propagation of support along leaves}

The local product description of the support extends along each connected
strong stable leaf through overlapping foliation boxes.

\begin{proposition}
\label{prop:support-saturation}
Let $\mu$ be a finite Radon measure on $Z$ satisfying
\eqref{eq:section-five-divergence-hypothesis}.  For every relatively
compact subbox
$U_0=P_0\times T_0$ considered in
Proposition~\ref{prop:local-disintegration}, and for the corresponding
transverse measure $\nu$,
\begin{equation}\label{eq:local-support-product}
  \supp\mu\cap U_0=P_0\times\supp_{T_0}\nu,
\end{equation}
where $\supp_{T_0}\nu$ denotes the support of $\nu$ in the relative
topology of $T_0$.  Moreover, the support of $\mu$ is saturated by
strong stable leaves: for every $z\in\supp\mu$,
\[
  \Wss(z)\subseteq\supp\mu.
\]
\end{proposition}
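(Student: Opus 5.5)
The plan is to prove the local product formula \eqref{eq:local-support-product} first and then obtain saturation from it by chaining boxes along a leaf.

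\emph{Local product formula.} Work inside $U_0=P_0\times T_0$, which is homeomorphic to this product via $\Theta$, and use \eqref{eq:rectangle-disintegration} together with the positive lower bound $\inf_{\tau\in V}\vol_\tau(E)>0$ for nonempty open $E\Subset P_0$ and $V\Subset T_0$.

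For the inclusion ``$\supseteq$'', let $(y,\tau)\in P_0\times\supp_{T_0}\nu$. Every neighborhood of this point contains a product $E\times V$ with $E\Subset P_0$ an open neighborhood of $y$ and $V\Subset T_0$ an open neighborhood of $\tau$. Then
\[
\mu(E\times V)\geq \Bigl(\inf_V\vol_\cdot(E)\Bigr)\,\nu(V)>0,
\]
so $(y,\tau)\in\supp\mu$.

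For the inclusion ``$\subseteq$'', suppose $(y,\tau)\in U_0$ with $\tau\notin\supp_{T_0}\nu$. Choose an open $V\ni\tau$ with $\nu(V)=0$. Then $\mu(P_0\times V)=\int_V\vol_\cdot(P_0)\,d\nu=0$. Since $P_0\times V$ is an open neighborhood of $(y,\tau)$, the point is not in $\supp\mu$. (Here $\vol_\tau(P_0)=v(\tau)$ is finite and bounded, so the integral makes sense.) This proves \eqref{eq:local-support-product}, and in particular shows that $\supp\mu\cap U_0$ is a union of full plaques $P_0\times\{\tau\}$.

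\emph{Saturation.} Fix $z\in\supp\mu$ and $w\in\Wss(z)$. The leaf $\Wss(z)$ is the image of a connected horosphere by Lemma~\ref{lem:horospherical-geometry}, so it is path connected in its leaf topology. Choose a leafwise path $\alpha:[0,1]\to\Wss(z)$ from $z$ to $w$. By Lemma~\ref{lem:strong-stable-foliation-regularity}, each point $\alpha(s)$ lies in a relatively compact subbox $U_0=P_0\times T_0$ with $P_0$ connected, in which the plaque through $\alpha(s)$ is a leaf-open neighborhood of $\alpha(s)$ in $\Wss(z)$. Since $[0,1]$ is compact, a Lebesgue number argument gives a partition $0=s_0<\dots<s_k=1$ and subboxes $U^{(i)}$ such that $\alpha([s_{i-1},s_i])$ lies in a single plaque $P^{(i)}_0\times\{\tau_i\}$ of $U^{(i)}$.

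Induct on $i$. Suppose $\alpha(s_{i-1})\in\supp\mu$. The point $\alpha(s_{i-1})$ lies in $U^{(i)}$ on the plaque with label $\tau_i$, so \eqref{eq:local-support-product} applied in $U^{(i)}$ gives $\tau_i\in\supp\nu^{(i)}$. Hence the whole plaque is in $\supp\mu$, and in particular $\alpha(s_i)\in\supp\mu$. After $k$ steps, $w\in\supp\mu$.

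\emph{Main obstacle.} The delicate point is ensuring that the leaf segment $\alpha([s_{i-1},s_i])$ really lies in one plaque of a single subbox, rather than crossing several plaques of the same leaf. Plaques are connected open pieces of leaves, and a leaf may return to a box, so its intersection with the box can consist of many plaques. I would handle this by working in the leaf topology: the plaque through $\alpha(s)$, namely the $P_0$-slice through that point, is a leaf-open set containing $\alpha(s)$. Its $\alpha$-preimage is therefore open in $[0,1]$, and these preimages give the open cover to which the Lebesgue number argument is applied. A second point to check is that $P_0$ can be shrunk so that $U_0$ is relatively compact while still containing a neighborhood of the given point; this is immediate from the box construction.
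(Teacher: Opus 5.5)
Your proposal is correct and follows the paper's proof: the local product formula uses the same two estimates from \eqref{eq:rectangle-disintegration}, and saturation is propagated along plaques using connectedness of the leaf. The only difference is that the paper shows $\supp\mu\cap\Wss(z)$ is nonempty, closed, and (by the local formula) open in the connected leaf. That clopen argument replaces your path-chaining and Lebesgue-number induction and avoids the bookkeeping about a leaf meeting a box in several plaques.
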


\begin{proof}
Set $S:=\supp\mu$.  Since $U_0$ is
open in $Z$, the relative support of $\mu|_{U_0}$ is $S\cap U_0$.
If $\tau_0\notin\supp_{T_0}\nu$, there is an open neighborhood
$V\subset T_0$ of $\tau_0$ with $\nu(V)=0$.  By
\eqref{eq:rectangle-disintegration},
\[
  \mu(P_0\times V)=\int_V\vol_\tau(P_0)\,d\nu(\tau)=0.
\]
The zero-measure open set $P_0\times V$ contains the plaque
$P_0\times\{\tau_0\}$, so
\[
  S\cap U_0\subseteq P_0\times\supp_{T_0}\nu.
\]

For the reverse inclusion, fix
$(y_0,\tau_0)\in P_0\times\supp_{T_0}\nu$, and let $O$ be an arbitrary
open neighborhood of this point in $U_0$.  To prove that
$(y_0,\tau_0)\in S$, it is enough to prove $\mu(O)>0$.  Product
rectangles form a basis in the box coordinates, so there are open sets
$P_1\Subset P_0$ and $V\Subset T_0$ such that
\[
  (y_0,\tau_0)\in P_1\times V
  \quad\text{and}\quad
  \overline{P_1}\times\overline V\subset O.
\]
Positivity and continuity of $Q$ give
\[
  c:=|P_1|\min_{\overline{P_1}\times\overline V}Q>0,
  \qquad \vol_\tau(P_1)\geq c\quad(\tau\in V).
\]
Since $\tau_0\in\supp_{T_0}\nu$, we also have $\nu(V)>0$.  Hence
\[
  \mu(O)
  \geq\mu(P_1\times V)
  =\int_V\vol_\tau(P_1)\,d\nu(\tau)
  \geq c\nu(V)>0.
\]
Since $O$ was arbitrary, $(y_0,\tau_0)\in S$, which proves
\eqref{eq:local-support-product}.

Now fix $z\in S$ and put $L:=\Wss(z)$.  The set $S\cap L$ is nonempty
and closed in $L$.  For $w\in S\cap L$, choose a relatively compact
subbox $U_0=P_0\times T_0$ containing $w=(y_0,\tau_0)$.  The local
identity gives
\[
  w\in S\cap U_0
  \quad\Longrightarrow\quad
  \tau_0\in\supp_{T_0}\nu
  \quad\Longrightarrow\quad
  P_0\times\{\tau_0\}\subset S.
\]
The plaque $P_0\times\{\tau_0\}$ is an open neighborhood of $w$ in
$L$.  Thus $S\cap L$ is open as well as closed in the connected leaf
$L$, and therefore $S\cap L=L$.
\end{proof}

\begin{lemma}
\label{lem:all-leaves-dense}
Let $(M^m,g)$, $m\geq2$, be a closed connected Riemannian manifold with
$-A^2\leq\secg_g\leq-1$ for some $A\geq1$.  Let $(X,g_X)$ be its
universal Riemannian cover with deck group $\Gamma$, and set
$Z=(X\times\partial_\infty X)/\Gamma$.  Then every strong stable leaf is
dense in $Z$.
\end{lemma}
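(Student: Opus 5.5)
The plan is to reduce the statement to Eberlein's minimality theorem for the horospherical foliation of a compact negatively curved manifold, transported through the identification of Proposition~\ref{prop:stable-leaf-identification}.  By that proposition, $\overline\Phi\colon Z\to SM$ is a homeomorphism with $\overline\Phi(\Wss(z))=W^{ss}_{SM}(\overline\Phi(z))$ for every $z\in Z$.  Density is a topological property preserved by homeomorphisms.  It therefore suffices to show that every strong stable set $W^{ss}_{SM}(\bar v)$ is dense in $SM$.  For a closed manifold with $\secg_g\le-1$ this is exactly \cite{Eberlein1973}*{Theorem~6.1}: on a compact manifold without conjugate points whose universal cover satisfies the visibility axiom, the horospherical (strong stable) foliation of $SM$ is minimal.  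Negative curvature bounded away from zero gives visibility, so the theorem applies.

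To make the citation self-contained, I would also sketch a direct argument in boundary coordinates.  Fix $z=[x,\xi]$ and a nonempty open set $U\subset Z$.  The first step is to use the density of the weak stable leaves.  The lift of $\Wws(z)$ is $X\times\{\xi\}$, so its image in $Z$ contains $[y,\gamma\xi]$ for every $\gamma\in\Gamma$ and $y\in X$.  Because $\Gamma$ is a cocompact lattice of a pinched Hadamard manifold, every $\Gamma$-orbit in $\partial_\infty X$ is dense (the limit set is all of $\partial_\infty X$).  Hence $\Wws(z)$ is dense.

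The second step upgrades weak density to strong density.  This requires that the Busemann heights realized at a fixed point by horospheres $\gamma H$ centered at points $\gamma\xi$ near a target $\eta$ be dense modulo nothing, that is, dense in $\R$.  Equivalently, the geodesic flow must not admit a nontrivial eigenfunction constant on strong stable leaves.  I would obtain this from non-arithmeticity of the length spectrum.  Consider axes of hyperbolic elements $\gamma$ with attracting fixed point near $\eta$; conjugating by $\gamma^n$ shifts heights by multiples of the translation length $\ell(\gamma)$.  Using two elements whose translation lengths have irrational ratio, the shifts are dense in $\R$.  In negative curvature the length spectrum is non-arithmetic (Dal'bo's theorem), which supplies such elements.

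I expect this second step, the non-arithmeticity input, to be the main obstacle; everything else is routine bookkeeping with $\overline\Phi$ and the cone topology.  In the written proof I would therefore rely primarily on the direct citation of \cite{Eberlein1973}*{Theorem~6.1} combined with Proposition~\ref{prop:stable-leaf-identification}, and include the sketch only as explanation.
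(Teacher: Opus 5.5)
Your main argument is the paper's proof. It derives the visibility axiom from $\secg\le-1$, applies Eberlein's Theorem~6.1 to get density of every strong stable set in $SM$, and transports this density to $Z$ through the homeomorphism $\overline\Phi$ and Proposition~\ref{prop:stable-leaf-identification}. Your supplementary sketch is not needed. If you keep it, restrict the blanket claim that the length spectrum is non-arithmetic in negative curvature to cases where this is actually known, such as cocompact $\Gamma$, where it follows for instance from mixing of the geodesic flow.
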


\begin{proof}
The bound $\secg_{g_X}\leq-1$ implies that $X$ satisfies the visibility
axiom, so $M=X/\Gamma$ is a visibility manifold
(see \cite{EO73}*{Definition~4.1 and the following discussion}).
Eberlein's minimality theorem states that a visibility manifold is compact
if and only if every strong stable set is dense in its unit tangent bundle
(see \cite{Eberlein1973}*{Theorem~6.1}).  Hence, for every $z\in Z$,
\[
  \overline{W^{ss}_{SM}(\overline\Phi(z))}=SM.
\]
By Proposition~\ref{prop:stable-leaf-identification},
$\overline\Phi(\Wss(z))=W^{ss}_{SM}(\overline\Phi(z))$.  Since
$\overline\Phi:Z\to SM$ is a homeomorphism, it follows that
$\overline{\Wss(z)}=Z$.
\end{proof}

Proposition~\ref{prop:support-saturation} and
Lemma~\ref{lem:all-leaves-dense} imply full support for the limiting
measures of Section~\ref{sec:suspension-localization}.

\begin{proposition}
\label{prop:full-support}
Under the hypotheses of Proposition~\ref{prop:ss-div}, every weak limit
$\mu$ of $\{\mu_j\}$ satisfies
\[
  \supp\mu=Z.
\]
\end{proposition}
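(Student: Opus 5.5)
The plan is to combine the three facts already established: nonemptiness of the support, leaf-saturation of the support, and density of every leaf. Let $\mu$ be a weak limit of $\{\mu_j\}$. By Corollary~\ref{cor:limiting-measure}, $\mu$ is a Borel probability measure on the compact metric space $Z$, so it is a finite Radon measure. It satisfies \eqref{eq:ss-divergence}, which is exactly the hypothesis \eqref{eq:section-five-divergence-hypothesis} of Propositions~\ref{prop:local-disintegration} and~\ref{prop:support-saturation}.

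First we check that $\supp\mu$ is nonempty. Since $Z$ is a compact metric space, hence second countable, the complement of the support is a countable union of $\mu$-null open sets. It is therefore $\mu$-null. Because $\mu(Z)=1$, this forces $\supp\mu\neq\emptyset$, so we may pick $z\in\supp\mu$.

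Next, Proposition~\ref{prop:support-saturation} gives $\Wss(z)\subseteq\supp\mu$. Since the support is closed, we get
\[
  \overline{\Wss(z)}\subseteq\supp\mu.
\]
By Lemma~\ref{lem:all-leaves-dense}, $\overline{\Wss(z)}=Z$. Hence $\supp\mu=Z$.

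Essentially all the substantive work sits in the cited results: the disintegration behind Proposition~\ref{prop:support-saturation} and Eberlein's minimality theorem behind Lemma~\ref{lem:all-leaves-dense}. The one step needing a moment's care is confirming that $\mu$ meets the standing hypotheses of those results, namely that it is a finite Radon measure satisfying the divergence identity for every admissible $Y$. Both are supplied directly by Corollary~\ref{cor:limiting-measure}, so I expect no genuine obstacle here.
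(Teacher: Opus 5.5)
Your proposal is correct and follows the paper's proof essentially verbatim: Corollary~\ref{cor:limiting-measure} supplies a probability measure satisfying the divergence identity, Proposition~\ref{prop:support-saturation} makes the closed, nonempty support saturated by strong stable leaves, and Lemma~\ref{lem:all-leaves-dense} then forces $\supp\mu=Z$. Your second-countability argument for why the support is nonempty is a detail the paper leaves implicit.
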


\begin{proof}
Corollary~\ref{cor:limiting-measure} shows that $\mu$ is a probability
measure and satisfies \eqref{eq:section-five-divergence-hypothesis}.
Thus $S:=\supp\mu$ is nonempty, and
Proposition~\ref{prop:support-saturation} shows that $S$ contains the
strong stable leaf through each of its points.  Choose $z\in S$.
Lemma~\ref{lem:all-leaves-dense} gives
$Z=\overline{\Wss(z)}\subset S$, because $S$ is closed.  Hence $S=Z$.
\end{proof}

For the measure in Corollary~\ref{cor:limiting-measure},
Proposition~\ref{prop:full-support} and
\eqref{eq:support-zero-set} give $D^{-1}(0)=Z$.  This is the equality in
Busemann Laplacian comparison used in the proof of the main theorem.

\section{Proof of the main theorem}
\label{sec:proof-main-theorem}

\begin{lemma}
\label{lem:busemann-rigidity}
Let $(X^m,g_X)$ be a complete simply connected Riemannian manifold,
$m\geq2$, with
\[
  -A^2\leq\secg_{g_X}\leq-1
\]
for some $A\geq1$.  Suppose that a Busemann function $B\in C^2(X)$
satisfies
\[
  |\nabla B|=1,
  \qquad
  \Delta B=m-1.
\]
Then $(X,g_X)\cong\Hh^m(-1)$.
\end{lemma}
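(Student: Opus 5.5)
The plan is to upgrade the trace equality $\Delta B=m-1$ to equality in the full Hessian comparison, integrate the resulting Riccati equation to obtain a warped product $dt^2+e^{2t}g_H$, and then use the curvature bounds to force $g_H$ flat.

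\emph{Step 1: Hessian equality.} By Proposition~\ref{prop:busemann-comparison}(ii), applied to $B=B_\xi$, the form $E:=\nabla^2B-(g_X-dB\otimes dB)$ is positive semidefinite. Its trace is $\Delta B-(m-1)=0$. A positive semidefinite form with zero trace vanishes, so
\[
  \nabla^2B=g_X-dB\otimes dB .
\]
In particular, every shape operator $S_{\xi,x}$ of Lemma~\ref{lem:horospherical-geometry} equals $\operatorname{Id}$.

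\emph{Step 2: warped product structure.} Use the flow $\psi_t$ of $N=\nabla B$, which is complete by the proof of Lemma~\ref{lem:horospherical-geometry}, together with the diffeomorphism $F_0\colon H\times\R\to X$, where $H=H_{\xi,0}$ and $B(F_0(y,t))=t$. The integral curves of $N$ are geodesics, since $\nabla_NN=\nabla^2B(N,\cdot)^\sharp=0$. For $J=d\psi_t(V)$ with $V\in TH$, commuting the coordinate fields gives
\[
  \nabla_NJ=\nabla_JN=J .
\]
This is the Lie-derivative identity $\mathcal L_Ng=2\nabla^2B=2(g-dB^2)$. Integrating it shows that $\psi_t^*g$ restricted to $H$ equals $e^{2t}g_H$, where $g_H$ is the induced metric on $H$. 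Hence $g_X=dt^2+e^{2t}g_H$. The metric $g_X$ is only $C^1$ in these coordinates a priori, but $g_H$ inherits smoothness from the smooth hypersurface structure once the Hessian identity holds. The cleanest route is to note that $B$ then solves the smooth ODE system along geodesics, so $B$ is smooth. I would justify this briefly, or work with Jacobi fields to avoid regularity issues entirely.

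\emph{Step 3: flatness of $g_H$.} This is the main computation. For a warped product $dt^2+f(t)^2g_H$ with $f=e^t$:
\begin{itemize}
\item the radial curvatures are $-f''/f=-1$;
\item for orthonormal $g_H$-vectors $u,w$ tangent to $H$, the tangential sectional curvature is
\[
  \frac{K_H(u,w)-f'^2}{f^2}=e^{-2t}K_H(u,w)-1 .
\]
\end{itemize}
The Jacobi field form of this avoids differentiating a $C^1$ metric: the Gauss equation on the horosphere $H_t$ with second fundamental form $\operatorname{Id}$ gives $K_{H_t}=\sec+1$. The hypotheses $\sec\le-1$ and $\sec\ge-A^2$ then give
\[
  -A^2\le e^{-2t}K_H(u,w)-1\le-1\quad\text{for all }t .
\]
The upper bound forces $K_H\le0$. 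Letting $t\to-\infty$, the lower bound forces $K_H\ge0$, since otherwise $e^{-2t}K_H$ blows up to $-\infty$. Hence $K_H\equiv0$ when $m\ge3$. When $m=2$ there are no tangential planes, and $H$ is a curve.

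\emph{Step 4: identification with $\Hh^m(-1)$.} $H$ is diffeomorphic to the simply connected manifold $X/\R$ and is complete: $g_H$ is the metric on the closed level set $H$, and the induced metric on a closed embedded hypersurface of a complete manifold is complete. Hence $(H,g_H)\cong\R^{m-1}$. Then
\[
  g_X=dt^2+e^{2t}|dz|^2 ,
\]
and the substitution $r=e^{-t}$ gives the upper half-space model $r^{-2}(dz^2+dr^2)$, consistent with $B=-\log r$ in Example~\ref{ex:stable-models}.

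The main obstacle is the regularity bookkeeping in Steps 2–3, since $B$ is only known to be $C^2$. I would first try to phrase everything through Jacobi fields along the geodesics $t\mapsto\psi_t(y)$: the stable Jacobi fields satisfy $J'=J$, so $J(t)=e^tP_t(J(0))$ with $P_t$ parallel transport. Feeding this into the Jacobi equation gives $R(J,N)N=-J$, so radial curvature is $-1$ without differentiating $g_H$. The tangential curvature would then be handled via the Gauss equation on each $H_t$, whose induced metric is $e^{2t}g_H$ up to the diffeomorphism $\psi_t$.
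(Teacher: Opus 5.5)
Your proposal is correct and follows essentially the same route as the paper: the zero-trace positive semidefinite tensor gives $\nabla^2B=g_X-dB\otimes dB$, the gradient flow yields $dt^2+e^{2t}g_H$, the tangential curvature $e^{-2t}\secg_{g_H}-1$ with $t\to-\infty$ forces $g_H$ to be flat, and $r=e^{-t}$ gives the half-space model. The one step to tighten is regularity: smoothness ``along geodesics'' says nothing about transverse derivatives, and the Gauss-equation route still needs the horospheres to be better than $C^2$, so bootstrap as the paper does from the coordinate form $\partial_i\partial_jB=\Gamma^k_{ij}\partial_kB+(g_X)_{ij}-\partial_iB\,\partial_jB$, whose right-hand side is $C^{k-1}$ whenever $B\in C^k$, so $B$ is smooth.
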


\begin{proof}
By Proposition~\ref{prop:busemann-comparison}, the symmetric tensor
\[
  \mathcal E
  :=\nabla^2B-\bigl(g_X-dB\otimes dB\bigr)
\]
is positive semidefinite.  Its trace is
\[
  \operatorname{tr}_{g_X}\mathcal E
  =\Delta B-\bigl(m-|\nabla B|^2\bigr)=0.
\]
Hence $\mathcal E=0$, and
\begin{equation}\label{eq:busemann-hessian-equality}
  \nabla^2B=g_X-dB\otimes dB.
\end{equation}

In smooth local coordinates, this identity is
\[
  \partial_i\partial_jB
  =\Gamma_{ij}^k\partial_kB+(g_X)_{ij}
   -(\partial_iB)(\partial_jB).
\]
Starting with $B\in C^2$, induction gives $B\in C^k$ for every $k$.
Thus $B$ is smooth.  If $T=\nabla B$, then
\eqref{eq:busemann-hessian-equality} is equivalent to
\begin{equation}\label{eq:gradient-busemann-covariant-derivative}
  \nabla_VT=V-\langle V,T\rangle T
  \qquad (V\in TX).
\end{equation}
In particular, $\nabla_TT=0$, and the integral curves of $T$ are
unit-speed geodesics.

Set $H=B^{-1}(0)$, and let $\Psi_t$ be the flow of $T$.  Completeness of
$X$ implies that these geodesics, and hence $\Psi_t$, are defined for every
$t\in\R$.  Along an integral curve,
\[
  \frac{d}{dt}B(\Psi_t(x))
  =\langle\nabla B,T\rangle=1,
\]
and therefore
\begin{equation}\label{eq:busemann-along-gradient-flow}
  B(\Psi_t(x))=B(x)+t.
\end{equation}
Consequently,
\[
  \Psi:\R\times H\longrightarrow X,
  \qquad
  \Psi(t,y)=\Psi_t(y),
\]
is a diffeomorphism with inverse
\[
  x\longmapsto
  \bigl(B(x),\Psi_{-B(x)}(x)\bigr).
\]
In particular, $H$ is connected.

Let $g_t$ be the pullback to $H$ of the metric induced on $B^{-1}(t)$.
Extend $Y_1,Y_2\in TH$ by the flow, so that $[T,Y_i]=0$.  The
torsion-free identity $\nabla_TY_i=\nabla_{Y_i}T$ and
\eqref{eq:gradient-busemann-covariant-derivative} give
\begin{align*}
  \frac{d}{dt}g_t(Y_1,Y_2)
  &=\langle\nabla_TY_1,Y_2\rangle
    +\langle Y_1,\nabla_TY_2\rangle\\
  &=\langle\nabla_{Y_1}T,Y_2\rangle
    +\langle Y_1,\nabla_{Y_2}T\rangle\\
  &=2g_t(Y_1,Y_2).
\end{align*}
Thus $g_t=e^{2t}g_H$, where $g_H=g_0$.  Since $T$ is unit and
orthogonal to the level sets of $B$,
\begin{equation}\label{eq:exponential-warped-product}
  \Psi^*g_X=dt^2+e^{2t}g_H.
\end{equation}

The hypersurface $H$ is closed in $X$.  If a sequence is Cauchy for the
intrinsic distance of $H$, it is Cauchy in $X$ and therefore converges to a
point of $H$; local equivalence of the intrinsic and ambient distances then
gives convergence in $H$.  Thus $(H,g_H)$ is complete.  The space $H$ is
also simply connected because $\R\times H\cong X$ and $X$ is simply
connected.

When $m=2$, the one-dimensional manifold $H$ is flat.  Suppose $m\geq3$.
For a two-plane $\sigma\subset T_yH$, let $\sigma_t$ be its image in the
tangent space of $\{t\}\times H$.  The warped-product curvature formula
(see \cite{BishopONeill1969}*{Section~7, formula preceding
Corollary~7.10}) gives
\begin{equation}\label{eq:tangential-warped-curvature}
  \secg_{g_X}(\sigma_t)
  =e^{-2t}\secg_{g_H}(\sigma)-1.
\end{equation}
The upper curvature bound implies $\secg_{g_H}(\sigma)\leq0$.  If
$\secg_{g_H}(\sigma)<0$, then the right-hand side of
\eqref{eq:tangential-warped-curvature} tends to $-\infty$ as
$t\to-\infty$, contrary to $\secg_{g_X}\geq-A^2$.  Hence $g_H$ is flat.

A complete simply connected flat manifold is Euclidean, so
$(H,g_H)\cong(\R^{m-1},g_{\mathrm{Euc}})$.  Equation
\eqref{eq:exponential-warped-product}, under the identification by $\Psi$,
becomes
\[
  g_X=dt^2+e^{2t}g_{\mathrm{Euc}}.
\]
After the change of variable $r=e^{-t}$,
\[
  g_X=\frac{dr^2+g_{\mathrm{Euc}}}{r^2},
\]
the upper-half-space metric on $\Hh^m(-1)$.
\end{proof}

\begin{proof}[Proof of Theorem~\ref{thm:main}]
Let $(X,g_X)=(\widetilde M,\widetilde g)$ and suppose that
$\hiso(X)=m-1$.  Compactness of $M$ gives a constant $A\geq1$ such that
\[
  -A^2\leq\secg_{g_X}\leq-1.
\]
Fix $\xi_0\in\partial_\infty X$ and choose an isoperimetric minimizing
sequence $\{\Omega_j\}$.  Corollary~\ref{cor:limiting-measure} gives a
weak limit $\mu$ of the associated probability measures on $Z$ such that
\[
  \supp\mu\subseteq D^{-1}(0).
\]
Proposition~\ref{prop:full-support} gives $\supp\mu=Z$.  Therefore
$D^{-1}(0)=Z$, and the definition of $D$ yields
\begin{equation}\label{eq:busemann-laplacian-equality}
  \Delta B_\xi(x)=m-1
  \qquad
  \text{for every }(x,\xi)\in X\times\partial_\infty X.
\end{equation}

Fix $\xi\in\partial_\infty X$.  Proposition~
\ref{prop:busemann-comparison} gives $B_\xi\in C^2(X)$ and
$|\nabla B_\xi|=1$.  Equation~\eqref{eq:busemann-laplacian-equality} and
Lemma~\ref{lem:busemann-rigidity}, applied to $B_\xi$, give
$(X,g_X)\cong\Hh^m(-1)$.
\end{proof}

\section{Relation with the Cheeger inequality}
\label{sec:p-comparison}

Write
\[
  h(X):=\hiso(X).
\]
For \(1\leq p<\infty\), set
\[
  \lambda_{1,p}(X)
  :=\inf_{0\neq u\in C_c^\infty(X)}
  \frac{\displaystyle\int_X|\nabla u|^p\,dV}
       {\displaystyle\int_X|u|^p\,dV}.
\]
For $p>1$, this is the variational bottom associated with $-\Delta_p$,
where
\[
  \Delta_pu:=\diver\bigl(|\nabla u|^{p-2}\nabla u\bigr).
\]
\begin{proposition}
\label{prop:standard-cheeger-inequality}
Let \(X\) be a complete Riemannian manifold.  Then
\begin{equation}\label{eq:lambda-one-cheeger}
  \lambda_{1,1}(X)=h(X),
\end{equation}
and, for every \(p\in(1,\infty)\),
\begin{equation}\label{eq:cheeger-inequality-lambda-p}
  \lambda_{1,p}(X)\geq
  \left(\frac{h(X)}{p}\right)^p.
\end{equation}
\end{proposition}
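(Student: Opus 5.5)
The plan is the classical route of Cheeger and Federer--Fleming: coarea formula plus layer-cake for $p=1$, and then the substitution $u\mapsto|u|^p$ with Hölder's inequality for $p>1$.

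\emph{Step 1: $\lambda_{1,1}(X)\le h(X)$.} Fix a smooth relatively compact domain $\Omega$. For small $\varepsilon>0$, take $u_\varepsilon(x)=\phi_\varepsilon(d(x,X\setminus\Omega))$, where $\phi_\varepsilon$ rises from $0$ to $1$ on $[0,\varepsilon]$. Since $\partial\Omega$ is smooth, the distance to the boundary is smooth in a collar, and $u_\varepsilon$ can be mollified to lie in $C_c^\infty(X)$ with the same limiting behaviour. Then
\[
\int_X|\nabla u_\varepsilon|\,dV\to\area(\partial\Omega),
\qquad
\int_X|u_\varepsilon|\,dV\to\vol(\Omega).
\]
Taking the infimum over $\Omega$ gives $\lambda_{1,1}\le h$.

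\emph{Step 2: the reverse inequality $\int|\nabla u|\ge h\int|u|$ for every $u\in C_c^\infty(X)$.} Since $|\nabla|u||\le|\nabla u|$ almost everywhere, it suffices to treat $f=|u|$, which is Lipschitz with compact support. The coarea formula gives
\[
\int_X|\nabla f|\,dV=\int_0^\infty\mathcal H^{m-1}(\{f=t\})\,dt,
\]
and layer-cake gives
\[
\int_X f\,dV=\int_0^\infty\vol(\{f>t\})\,dt.
\]
It then suffices to show $\mathcal H^{m-1}(\{f=t\})\ge h\,\vol(\{f>t\})$ for almost every $t$. For $u$ smooth, Sard's theorem says almost every $t>0$ is a regular value of $u$ and of $-u$, so $\{|u|>t\}$ is relatively compact with smooth boundary $\{|u|=t\}$. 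If this set is disconnected, apply the definition of $h$ to each component and add; the infimum in the definition is over domains, and a union of components still satisfies the ratio bound componentwise. This gives the pointwise inequality, and integrating in $t$ yields $\lambda_{1,1}\ge h$.

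\emph{Step 3: $p>1$.} Given $u\in C_c^\infty(X)$, apply Step 2 to $w=|u|^p$. This function is $C^1$ with compact support for $p>1$, and Step 2 extends to Lipschitz compactly supported functions by approximation or directly through coarea with the level-set argument applied to a smooth approximant. Using $|\nabla w|=p|u|^{p-1}|\nabla u|$ and Hölder with exponents $p/(p-1)$ and $p$,
\[
h\int|u|^p\le p\int|u|^{p-1}|\nabla u|\le p\Big(\int|u|^p\Big)^{(p-1)/p}\Big(\int|\nabla u|^p\Big)^{1/p}.
\]
Dividing and raising to the $p$th power gives \eqref{eq:cheeger-inequality-lambda-p}.

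The main technical obstacle is the regularity needed to apply the definition of $h$ to the level sets. In Step 2 this is handled by Sard's theorem applied to the smooth function $u$ rather than to $|u|$. In Step 3 it is cleanest to approximate $w=|u|^p$ by smooth $w_\delta=(u^2+\delta^2)^{p/2}-\delta^p$, apply Step 2 to $w_\delta$, and let $\delta\to0$. This uses $|\nabla w_\delta|\le p(u^2+\delta^2)^{(p-1)/2}|\nabla u|$ and dominated convergence.
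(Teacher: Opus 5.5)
Your proposal is correct and follows the same route as the paper: the coarea formula gives $\lambda_{1,1}(X)\geq h(X)$, smooth approximations of $\mathbf 1_\Omega$ give the reverse inequality, and for $p>1$ you substitute $|u|^p$ and apply H\"older. Your additional details make explicit the regularity points that the paper's proof leaves implicit: Sard's theorem applied to $u$, applying the definition of $h(X)$ to each component of $\{|u|>t\}$, and the smoothing $w_\delta=(u^2+\delta^2)^{p/2}-\delta^p$.
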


\begin{proof}
For \(u\in C_c^\infty(X)\), the coarea formula and the definition of
\(h(X)\) give
\[
\begin{aligned}
  \int_X|\nabla u|\,dV
  &=\int_0^\infty
    \area\bigl(\partial\{|u|>t\}\bigr)\,dt\\
  &\geq h(X)\int_0^\infty
    \vol\bigl(\{|u|>t\}\bigr)\,dt
   =h(X)\int_X|u|\,dV.
\end{aligned}
\]
Hence \(\lambda_{1,1}(X)\geq h(X)\).  Conversely, if
\(\Omega\Subset X\) has smooth boundary, standard smooth
approximations of \(\mathbf 1_\Omega\) satisfy
\[
  \int_Xu_\varepsilon\,dV\longrightarrow\vol(\Omega),
  \qquad
  \int_X|\nabla u_\varepsilon|\,dV
  \longrightarrow\area(\partial\Omega).
\]
Taking the infimum first over test functions and then over \(\Omega\)
proves \eqref{eq:lambda-one-cheeger}.

For \(p>1\), apply the preceding coarea estimate to \(|u|^p\) and use
H\"older's inequality:
\[
\begin{aligned}
  h(X)\int_X|u|^p\,dV
  &\leq p\int_X|u|^{p-1}|\nabla u|\,dV\\
  &\leq p
  \left(\int_X|u|^p\,dV\right)^{(p-1)/p}
  \left(\int_X|\nabla u|^p\,dV\right)^{1/p}.
\end{aligned}
\]
After division and taking the infimum, this is
\eqref{eq:cheeger-inequality-lambda-p}.
\end{proof}

Combining Proposition~\ref{prop:standard-cheeger-inequality} with Yau's
bound $h(X)\geq m-1$ gives:

\begin{corollary}
\label{cor:p-laplacian-equality}
Under the hypotheses of Theorem~\ref{thm:main}, for every
\(p\in(1,\infty)\),
\begin{equation}\label{eq:p-mckean-bound}
  \lambda_{1,p}(X)\geq\left(\frac{m-1}{p}\right)^p.
\end{equation}
Equality holds in \eqref{eq:p-mckean-bound} if and only if
\((X,g_X)\cong\Hh^m(-1)\).
\end{corollary}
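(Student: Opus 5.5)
The lower bound and the ``if'' direction reduce to earlier results plus one explicit computation on $\Hh^m(-1)$. The ``only if'' direction reduces to Theorem~\ref{thm:main}.

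\emph{Lower bound.} Proposition~\ref{prop:standard-cheeger-inequality} gives $\lambda_{1,p}(X)\geq(h(X)/p)^p$. Yau's estimate \eqref{eq:yau-lower} gives $h(X)\geq m-1$. Since $t\mapsto(t/p)^p$ is increasing on $[0,\infty)$, \eqref{eq:p-mckean-bound} follows.

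\emph{Equality implies hyperbolic.} Suppose $\lambda_{1,p}(X)=((m-1)/p)^p$. Then \eqref{eq:cheeger-inequality-lambda-p} gives $(h(X)/p)^p\leq((m-1)/p)^p$, so $h(X)\leq m-1$. Combined with Yau's bound, $h(X)=m-1$, and Theorem~\ref{thm:main} gives $(X,g_X)\cong\Hh^m(-1)$.

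\emph{Hyperbolic space attains equality.} Here it suffices to show $\lambda_{1,p}(\Hh^m(-1))\leq((m-1)/p)^p$, since the reverse inequality is already proved.
\begin{itemize}
\item Fix $o$, let $r=d(o,\cdot)$, and for $s>(m-1)/p$ set $u_s=e^{-sr}$. Then $u_s$ is Lipschitz, $|\nabla u_s|=s\,u_s$ almost everywhere, and
\[
  \int u_s^p\,dV=\omega_{m-1}\int_0^\infty e^{-psr}\sinh^{m-1}r\,dr<\infty
\]
because $ps>m-1$. Hence the Rayleigh quotient of $u_s$ is exactly $s^p$.
\item Multiply by cutoffs $\chi_R$ equal to $1$ on $B(o,R)$, $0$ off $B(o,R+1)$, with $|\nabla\chi_R|\leq1$. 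Then $\int|\nabla(\chi_Ru_s)|^p$ and $\int(\chi_Ru_s)^p$ converge to $s^p\int u_s^p$ and $\int u_s^p$. The errors are controlled by $\int_{r>R}u_s^p\to0$, using $|a+b|^p\leq(|a|+|b|)^p$ and dominated convergence.
\item Compactly supported Lipschitz functions are admissible in the infimum by density in $W^{1,p}$, via mollification. So $\lambda_{1,p}\leq s^p$, and letting $s\downarrow(m-1)/p$ finishes.
\end{itemize}

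The only step needing care is this last verification of the test-function upper bound on $\Hh^m$. The cutoff error and the density of $C_c^\infty$ among compactly supported Lipschitz functions are routine.
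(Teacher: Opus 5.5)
Your proof is correct and follows the paper's argument: the bound comes from the Cheeger-type inequality \eqref{eq:cheeger-inequality-lambda-p} combined with Yau's estimate, and rigidity comes from forcing $h(X)=m-1$ and then invoking Theorem~\ref{thm:main}. The only difference is the test-function computation on $\Hh^m(-1)$. The paper truncates the critical profile $e^{-\alpha r}$ with $\alpha=(m-1)/p$, which is not in $L^p$, and compares principal integrals that grow linearly in $R$ against bounded transition terms. You instead use the profiles $e^{-sr}$ with $s>\alpha$, which lie in $W^{1,p}$ with Rayleigh quotient exactly $s^p$, and then let $s\downarrow\alpha$; this is equally valid and slightly cleaner.
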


\begin{proof}
Combining \eqref{eq:yau-lower} with
\eqref{eq:cheeger-inequality-lambda-p} proves \eqref{eq:p-mckean-bound};
see also \cite{Poliquin2014}*{Proposition~4.5} and
\cite{CarvalhoCavalcante2022}*{Corollary~1.1}.  If equality holds,
then
\[
  \left(\frac{m-1}{p}\right)^p
  =\lambda_{1,p}(X)
  \geq\left(\frac{h(X)}{p}\right)^p
  \geq\left(\frac{m-1}{p}\right)^p.
\]
Thus \(h(X)=m-1\), and Theorem~\ref{thm:main} gives
\((X,g_X)\cong\Hh^m(-1)\).

Conversely, let \(X=\Hh^m(-1)\) and put \(\alpha=(m-1)/p\).  The
lower bound has already been proved.  In geodesic polar coordinates,
choose smooth radial functions \(u_R\) which agree with
\(e^{-\alpha r}\) on \(1\leq r\leq R\), are fixed near the origin,
and vanish for \(r\geq R+1\).  They may be chosen so that, on the outer
transition region,
\[
  |u_R(r)|+|u_R'(r)|\leq C e^{-\alpha R},
\]
where $C$ is independent of $R$.  Since
\[
  e^{-p\alpha r}\sinh^{m-1}r
  =e^{-(m-1)r}\sinh^{m-1}r
  \longrightarrow2^{-(m-1)},
\]
the contributions of the transition regions remain bounded, whereas
both principal integrals grow linearly in \(R\).  Consequently,
\[
  \frac{\int_X|\nabla u_R|^p\,dV}
       {\int_X|u_R|^p\,dV}
  \longrightarrow\alpha^p.
\]
Hence equality holds on hyperbolic space.
\end{proof}

\appendix
\section{Tangential approximation along horospheres}
\label{sec:horosphere-tangent-approximation}

The Gauss--Green argument in Lemma~\ref{lem:horosphere-div} requires a
tangency-preserving approximation.

\begin{lemma}
\label{lem:horosphere-tangent-approximation}
Let $(X^m,g_X)$ be a smooth Riemannian manifold, and let $B\in C^2(X)$
satisfy $\lvert\nabla B\rvert=1$.  Set
$\mathcal H:=\{H_t:t\in\R\}$, where $H_t=B^{-1}(t)$, and let $Y$ be a
continuous vector field on $X$ such that
$\langle Y,\nabla B\rangle=0$.  Assume that $Y|_{H_t}$ is $C^1$ for
every $t\in\R$.  For $x\in X$ and $V\in T_xH_{B(x)}$, set
\[
  (\nabla^{\mathcal H}Y)_x(V)
  :=\nabla^{H_{B(x)}}_V(Y|_{H_{B(x)}}),
\]
where $\nabla^{H_{B(x)}}$ is the Levi--Civita connection of the induced
metric on $H_{B(x)}$.  Assume that $\nabla^{\mathcal H}Y$ is continuous
on $X$, and set
\[
  \diver_{\mathcal H}Y(x)
  :=\operatorname{tr}_{T_xH_{B(x)}}(\nabla^{\mathcal H}Y)_x.
\]
This function is continuous.  For every smooth domain
$\Omega\Subset X$, there exists a sequence of vector fields
\[
  Y_k\in C_c^1(X,TX)
\]
such that
\[
  dB(Y_k)=0
\]
and
\[
  Y_k\longrightarrow Y,
  \qquad
  \diver_XY_k\longrightarrow\diver_{\mathcal H}Y
\]
uniformly on $\overline\Omega$.  If $B$ is smooth, the fields $Y_k$ may
be chosen in $C_c^\infty(X,TX)$.
\end{lemma}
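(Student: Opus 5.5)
The plan is to mollify $Y$ in local charts in which $B$ itself is one of the coordinates, so that tangency to $\mathcal H$ is preserved automatically. Continuity of $\diver_{\mathcal H}Y$ is immediate, since it is the trace of the continuous tensor $\nabla^{\mathcal H}Y$.

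\emph{Step 1: adapted charts.} Near any $x_0\in\overline\Omega$, choose smooth coordinates $x=x(y,s)$ with $\partial_s B(x_0)\neq0$. Solve $B(x(y,s))=t$ for $s=\rho(y,t)$ by the implicit function theorem. Since $B\in C^2$, the map $\kappa(y,t):=x(y,\rho(y,t))$ is a $C^2$ chart in which $B\circ\kappa=t$. The coordinate fields $\partial_{y^a}$ are then $C^1$ and tangent to the level sets $H_t$. Writing $Y=\sum_a Y^a\partial_{y^a}$ gives no $\partial_t$ component, because $dB(Y)=0$. Moreover, $g^{tt}=|\nabla B|^2=1$, so the cofactor formula gives
\[
  \sqrt{\det g}=\sqrt{\det(g_{ab})}=:Q,
\]
which is exactly the plaque density of the horospheres. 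Hence the ambient divergence of a $C^1$ field $\sum_a W^a\partial_{y^a}$ is $Q^{-1}\sum_a\partial_{y^a}(QW^a)$. This is the same coordinate expression as the leafwise divergence in Definition~\ref{def:strong-stable-divergence}. In particular,
\[
  \diver_{\mathcal H}Y=Q^{-1}\sum_a\partial_{y^a}F^a,
  \qquad
  F^a:=QY^a,
\]
where $F^a$ and its $y$-derivatives are jointly continuous in $(y,t)$. This uses the hypotheses on $\nabla^{\mathcal H}Y$ together with the fact that $Q$ is $C^1$ with jointly continuous first derivatives.

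\emph{Step 2: local mollification.} Cover $\overline\Omega$ by finitely many such charts and take a smooth partition of unity $\{\chi_i\}$ subordinate to them on a neighborhood of $\overline\Omega$. Each $\chi_iY$ is still tangent and satisfies the same hypotheses, with
\[
  \diver_{\mathcal H}(\chi_iY)=\chi_i\diver_{\mathcal H}Y+d\chi_i(Y).
\]
Fix one chart, and let $F^a$ now denote the components $Q(\chi_iY)^a$. Let $\eta_k$ be a standard mollifier in the variables $(y,t)$ jointly, and set
\[
  Y_{k}^{(i)}:=\sum_a Q^{-1}(F^a*\eta_k)\,\partial_{y^a}.
\]
For large $k$ this field is compactly supported in the chart, is $C^1$, and satisfies $dB(Y_k^{(i)})=0$. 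Convolution commutes with $\partial_{y^a}$, and $\partial_{y^a}F^a$ is continuous. Therefore, uniformly on compact sets,
\[
  \diver_XY_k^{(i)}=Q^{-1}\bigl(\partial_{y^a}F^a\bigr)*\eta_k\longrightarrow Q^{-1}\partial_{y^a}F^a=\diver_{\mathcal H}(\chi_iY),
\]
and $Y_k^{(i)}\to\chi_iY$ uniformly as well.

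\emph{Step 3: assembly.} Summing over $i$, the fields $Y_k:=\sum_iY_k^{(i)}$ have all the required properties. If $B$ is smooth, then the chart, $Q$, and the coordinate fields are smooth, so $Y_k\in C_c^\infty$.

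The main obstacle is the regularity bookkeeping in Step 1. The chart is only $C^2$, and $Y$ is merely continuous transversally. The identity $\sqrt{\det g}=Q$, which relies on $|\nabla B|=1$ and on $t=B$ being a coordinate, is what makes a single quantity $F^a$ serve both divergences. Without it, a commutator term would appear that is not controlled, because $Y$ has no transverse derivative. The proof must also check that $\partial_{y^a}F^a$ is jointly continuous, which comes from the hypothesis that $\nabla^{\mathcal H}Y$ is continuous.
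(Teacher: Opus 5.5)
Your proposal is correct and is essentially the paper's proof: $C^2$ adapted charts with $t=B$, the tangential frame $\partial_{y^a}$, mollification in all variables with differentiation only along the leaves, and the same density $Q$ in both the ambient and leafwise divergence formulas. You obtain $\sqrt{\det g}=Q$ from $g^{tt}=|\nabla B|^2=1$ and the cofactor formula, where the paper uses the coarea formula; otherwise the differences are bookkeeping, namely that you mollify $QY^a$ rather than $Y^a$ and apply the partition of unity before mollifying, which removes the paper's cross term $\sum_i\langle\nabla\phi_i,Z_{i,k}-Y\rangle$.
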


\begin{proof}
Fix $\Omega\Subset X$.  Since $|\nabla B|=1$, the level sets of $B$ are
regular hypersurfaces.  The $C^2$ implicit function theorem gives adapted
coordinates
\[
  (t_i,y_i^1,\ldots,y_i^{m-1})
\]
on a neighborhood $U_i$, where $t_i=B$.  These charts are of class
$C^2$, so their coordinate vector fields are $C^1$.
Choose finitely many such charts $U_1,\ldots,U_\ell$ covering a
neighborhood of $\overline\Omega$, together with open sets
\[
  V_i\Subset U_i,
  \qquad
  \overline\Omega\subset\bigcup_{i=1}^\ell V_i.
\]

Tangency to the level sets means that on $U_i$ the field $Y$ has the form
\[
  Y=\sum_{a=1}^{m-1}Y_i^a(t_i,y_i)
  \frac{\partial}{\partial y_i^a}.
\]
If $\Gamma_{i,bc}^a$ are the Christoffel symbols of the induced metrics
on the level sets, then
\[
  \bigl(\nabla^{H_{t_i}}_{\partial_{y_i^b}}Y\bigr)^a
  =\partial_{y_i^b}Y_i^a
   +\sum_{c=1}^{m-1}\Gamma_{i,bc}^aY_i^c.
\]
The functions $\Gamma_{i,bc}^a$ are continuous.  Hence the continuity of
$\nabla^{\mathcal H}Y$ implies that every
$\partial_{y_i^b}Y_i^a$ is continuous.
Choose $\chi_i\in C_c^1(U_i)$ with $\chi_i=1$ on $\overline V_i$, and
set $G_i^a=\chi_iY_i^a$.  In the adapted coordinates, extend $G_i^a$ by
zero to $\mathbb R^m$.  The extension is continuous, and its derivatives
in the leaf variables are continuous and compactly supported because
\[
  \partial_{y_i^b}G_i^a
  =(\partial_{y_i^b}\chi_i)Y_i^a
  +\chi_i\partial_{y_i^b}Y_i^a.
\]

Let $\rho$ be a standard mollifier on $\mathbb R^m$, and set
\[
  \rho_\varepsilon(z)=\varepsilon^{-m}\rho(z/\varepsilon),
  \qquad
  G_{i,\varepsilon}^a=\rho_\varepsilon*G_i^a.
\]
No derivative of $Y$ in the $t_i$-direction is needed: convolution of the
continuous compactly supported coefficients is defined in all variables,
while differentiation is taken only in the leaf variables.  In particular,
\[
  \partial_{y_i^b}G_{i,\varepsilon}^a
  =\rho_\varepsilon*\partial_{y_i^b}G_i^a.
\]
It follows that, uniformly on $\overline V_i$,
\begin{equation}\label{eq:appendix-local-convergence}
  G_{i,\varepsilon}^a\longrightarrow Y_i^a,
  \qquad
  \partial_{y_i^b}G_{i,\varepsilon}^a
  \longrightarrow\partial_{y_i^b}Y_i^a.
\end{equation}

Define on $U_i$
\[
  Z_{i,\varepsilon}
  =\sum_{a=1}^{m-1}G_{i,\varepsilon}^a
  \frac{\partial}{\partial y_i^a}.
\]
This is a $C^1$ vector field.  It has no
$\partial_{t_i}$-component, and therefore
\[
  dB(Z_{i,\varepsilon})=0.
\]
Thus the local regularization preserves tangency to every level set of $B$.

Write the Riemannian volume form in these coordinates as
\[
  dV=Q_i(t_i,y_i)\,dt_i\,dy_i^1\cdots dy_i^{m-1}.
\]
The coarea formula and $|\nabla B|=1$ show that the induced volume form
on $H_{t_i}$ is $Q_i(t_i,y_i)\,dy_i$.  Hence
\[
  \diver_XZ_{i,\varepsilon}
  =\frac1{Q_i}\sum_{a=1}^{m-1}
  \partial_{y_i^a}(Q_iG_{i,\varepsilon}^a),
\]
whereas
\[
  \diver_{\mathcal H}Y
  =\frac1{Q_i}\sum_{a=1}^{m-1}
  \partial_{y_i^a}(Q_iY_i^a).
\]
The functions $Q_i$ and their leafwise first derivatives are continuous.
Hence \eqref{eq:appendix-local-convergence} gives, uniformly on
$\overline V_i$,
\begin{equation}\label{eq:appendix-local-divergence-convergence}
  Z_{i,\varepsilon}\longrightarrow Y,
  \qquad
  \diver_XZ_{i,\varepsilon}
  \longrightarrow\diver_{\mathcal H}Y.
\end{equation}

Choose an open neighborhood $W$ of $\overline\Omega$ with
$\overline W\subset\bigcup_{i=1}^\ell V_i$.  Choose $C^1$ functions
$\phi_i$ such that
\[
  \phi_i\in C_c^1(V_i),
  \qquad
  \sum_{i=1}^\ell\phi_i=1
  \quad\text{on }\overline W.
\]
Choose $\varepsilon_k\downarrow0$ so that the preceding convergences hold
simultaneously in all the finitely many charts, and set
\[
  Z_{i,k}=Z_{i,\varepsilon_k},
  \qquad
  Y_k=\sum_{i=1}^\ell\phi_iZ_{i,k}.
\]
Each product $\phi_iZ_{i,k}$ extends by zero outside $U_i$.  Thus $Y_k$
is a globally defined compactly supported $C^1$ vector field, and
\[
  dB(Y_k)=\sum_{i=1}^\ell\phi_i\,dB(Z_{i,k})=0.
\]
Since $Y=\sum_i\phi_iY$ on $W$,
\[
  Y_k-Y=\sum_{i=1}^\ell\phi_i(Z_{i,k}-Y)
  \longrightarrow0
\]
uniformly on $\overline\Omega$.

The product rule and $\sum_i\nabla\phi_i=0$ on $W$ give
\begin{align*}
  \diver_XY_k-\diver_{\mathcal H}Y
  &=\sum_{i=1}^\ell\phi_i
  \bigl(\diver_XZ_{i,k}-\diver_{\mathcal H}Y\bigr)\\
  &\quad+\sum_{i=1}^\ell
  \langle\nabla\phi_i,Z_{i,k}-Y\rangle.
\end{align*}
Every term on the right tends uniformly to zero on $\overline\Omega$ by
\eqref{eq:appendix-local-divergence-convergence}.  Therefore
\[
  \diver_XY_k\longrightarrow\diver_{\mathcal H}Y
\]
uniformly on $\overline\Omega$.  If $B$ is smooth, all adapted charts,
cutoffs, and partitions may be chosen smooth, and the same construction
gives $Y_k\in C_c^\infty(X,TX)$.
\end{proof}

\begin{acknowledgements}
The defect quantity and elliptic-operator argument in the earlier paper of
Kuntao Jin and Bo Zhu motivated this work (see
\cite{JinZhu2026McKean}).  The earlier paper uses weak stable sets.  Here the
Busemann parameter is fixed and the argument is carried out on strong stable
leaves, using strong stable divergence, plaque disintegration, and support
propagation. The authors conceived the reformulation of (2.10) in which the integral of the leafwise divergence along the strong stable leaves vanishes, and used a generative AI tool to develop this idea into a proof that \(\operatorname{supp}\mu=Z\). The AI-generated proof was subsequently discarded and replaced by a shorter proof written entirely by the authors. No generative AI tool was used in any other part of the research or in the preparation of the manuscript.
\end{acknowledgements}

\end{document}